\def\Ebox#1#2{%
\begin{center}
\parbox{#1\hsize}{\includegraphics[width= #1\hsize]{#2}} 
\end{center}}
\title{On the existence of solutions to stochastic \\quasi-variational
	inequality and complementarity problems}
\author{Uma \ Ravat \and  Uday V.\ Shanbhag\footnote{The authors are affiliated with the
		Department of Statistics at the University of Illinois at
			Urbana-Champaign, IL. and the Industrial and Manufacturing
Engineering at the Pennsylvania State University at University Park,
			PA., respectively.  They are reachable at 
ravat1@illinois.com and  udaybag@psu.edu, respectively. This work was based on
research partially supported by the NSF BECS EFRI  1024837, NSF CMMI (CAREER) 1246887,
		 and DOE
	DE-SC0003879. Finally, we would like to thank Profs. Xiaojun Chen,
	Jong-Shi Pang, and Danny Ralph for their
	suggestions, advice, and encouragement in the context of this research.  }}
\date{\today}
\newcommand{\us}[1]{{\color{black}#1}}
\newcommand{\uss}[1]{{\color{black}#1}}
\newcommand{\uma}[1]{{\textbf{\color{black}#1}}}
\newcommand{\uvs}[1]{{\color{black}#1}}
\def\texitem#1{\par\smallskip\noindent\hangindent 25pt
               \hbox to 25pt {\hss #1 ~}\ignorespaces}
\def\st{\mbox{subject to}}
\newcommand{\pmat}[1]{\begin{pmatrix} #1 \end{pmatrix}}
\newtheorem{theorem}{Theorem }
\newtheorem{lemma}[theorem]{Lemma}
\newtheorem{definition}{Definition}[section]
\newtheorem{examplee}{Example}
\newtheorem{proposition}[theorem]{Proposition}
\newtheorem{corollary}[theorem]{Corollary}
\newtheorem{assumption}{Assumption}
\def\zbar{\skew{2.8}\bar z}
\def\limk{\lim_{k\to\infty}}
\def\conv{\mathrm {conv}}
\renewcommand{\emph}[1]{\textbf{#1}}
\def\bkE{{\rm I\kern-.17em E}}
\def\bk1{{\rm 1\kern-.17em l}}
\def\bkD{{\rm I\kern-.17em D}}
\def\bkR{{\rm I\kern-.17em R}}
\def\bkP{{\rm I\kern-.17em P}}
\def\bd{{\mathrm{bd}}}
\def\intt{{\mathrm{int}}}
\def\hnat{{\bf H}^{\textrm{nat}}}
\def\xref{x^{\textrm{ref}}}
\newcommand{\Real}{\ensuremath{\mathbb{R}}}
\def\Dscr{{\cal D}}
\def\Hscr{{\cal H}}
\def\Nscr{{\mathcal N}}
\def\co{\textrm{co}}
\def\om{\omega}
\def\be{\begin{enumerate}}
\def\ee{\end{enumerate}}
\def\st{\mbox{subject to}}
\def\Fscr{{\cal F}}
\def\bkE{\mathbb{E}}
		\def\bk1{{\rm 1\kern-.17em l}}
		\def\bkD{\mathbb{D}}
		\def\bkR{\mathbb{R}}
		\def\bkP{\mathbb{P}}
\def\xni{x_{-i}}
\def\K{{\bf K}}
\def\Pscr{{\mathcal P}}
\def\Dscr{{\mathcal D}}
\def\Kscr{{\mathcal K}}
\def\Fscr{{\mathcal F}}
\def\xref{x^{\textrm{ref}}}
\def\xbar{\bar{x}}
\newcommand{\gap}{\vspace{0.1in}}
\begin{document}
\maketitle
\thispagestyle{empty}
\pagestyle{plain}
\begin{abstract}
Variational inequality problems allow for capturing an expansive class
of problems, including convex optimization problems, convex Nash
games  and  economic equilibrium problems, amongst others. Yet in most
practical settings, such problems are complicated by uncertainty,
motivating the examination of  a stochastic generalization of the variational
inequality problem  and its extensions in which the components of the mapping
contain expectations.  When the associated sets are unbounded, ascertaining
existence requires having access to analytical forms of the expectations.
Naturally, in practical settings, such expressions are often difficult to
derive, severely limiting the applicability of such an approach.  Consequently,
our goal lies in developing techniques that obviate  the need for integration
and our emphasis lies in  developing tractable and verifiable sufficiency
conditions  for claiming existence.  We begin by recapping  almost-sure
sufficiency conditions for stochastic variational inequality problems with
single-valued maps provided in our prior
work~\cite{ravat09characterization,ravat11characterization2} and provide
extensions to multi-valued mappings. Next, we extend these statements to
quasi-variational regimes where maps can be either single or set-valued.
Finally, we refine the obtained results to accommodate stochastic
complementarity problems where the maps are either general or co-coercive.  The
applicability of our results is demonstrated on practically occuring instances
of stochastic quasi-variational inequality problems and stochastic
complementarity problems, arising as nonsmooth generalized Nash-Cournot games
and power markets, respectively.
\end{abstract}


\section{Introduction}\label{sect:intro}

\paragraph{Motivation:} In deterministic regimes, a wealth of
conditions exist for characterizing the solution sets of variational inequality,
quasi-variational inequality and complementarity  problems
(cf.~\cite{cottle92linear,facchinei02finite,konnov07equilibrium}),
including sufficiency statements of existence and uniqueness of solutions as well
as more refined conditions regarding the compactness and connectedness of
solution sets and a breadth of sensitivity and stability questions.
Importantly, the {analytical verifiability of such conditions from
problem primitives (such as the underlying map and the set) is
	essential to ensure the applicability of such schemes, as evidenced
	by the use of such
conditions in analyzing a range of problems arising in power
markets~\cite{hobbs01linear,hobbs07nash,shanbhag11complementarity},
communication
	networks~\cite{pang09nash,pang13sensing,scutari10monotone,yin09nash2},
structural analysis~\cite{pang96complementarity,pang00stability},
amongst others. The first instance of a stochastic
variational inequality problem was presented by King and
Rockafellar~\cite{KingRock93} in 1993 and the resulting  stochastic
variational inequality problem requires an $x \in X$
such that
$$ (y-x)^T \mathbb{E}[G(x,\xi(\omega))] \, \geq \, 0, \quad \forall y \,
	\in \, X, $$
	where $X \subseteq \Real^n$, $\xi:\Omega \to \Real^d$, $G: X \times
	\Real^d \to \Real^n$, $\mathbb{E}[.]$ denotes the expectation
	in a component-wise sense, and
	$(\Omega, {\cal F}, \mathbb{P})$ represents the probability space.  In the decade that followed, there was relatively little
effort on addressing analytical and computational challenges arising
from such problems. But in the last ten years,  there has been an immense interest in the
solution of such {\em stochastic} variational inequality problems via
Monte-Carlo sampling
methods~\cite{jiang08stochastic,juditsky2011,koshal13regularized,shap03sampling}.} 
{But  {\bf verifiable} conditions for characterization of solution
sets have proved to be relatively elusive given the presence of an
	integral (arising from the expectation) in the map.}  Despite the
	rich literature in deterministic settings, direct application of
	deterministic results to stochastic regimes is not straightforward
	and is complicated by several challenges: First,  a direct
	application of such techniques on stochastic problems requires  the
	availability of closed-form expressions of the expectations.
	Analytical expressions for expectation are not {easy} to derive
{even} for single-valued problems with relatively simple {continuous}
distributions. Second, any statement is closely tied to the
distribution. Together, these barriers  severely limit the
generalizability of such an approach. 
To illustrate the
complexity of the problem class under consideration, we consider a
simple stochastic  linear complementarity problem.\footnote{Formal
	definitions for these problems are provided in Section~\ref{sec:21}}

\begin{examplee} \label{ex-svi}
Consider the following stochastic linear complementarity problem:
$$ 0 \leq x \perp \mathbb{E}\left[\pmat{2 & 1 \\ 1 & 2} x + \pmat{-2 + \omega_1 \\ -4 +
\omega_2} \right] \geq 0. $$
Specifically, this can be cast
		  as an affine stochastic variational
			 inequality problem VI$(K,F)$ where
$$ K \triangleq \Real_2^+ \mbox{ and } F(x) \triangleq \mathbb{E}
		  [G(x;\omega)], \mbox{ where } G(x;\omega) \triangleq \left[\pmat{2 & 1 \\ 1 & 2} x + \pmat{-2 + \omega_1 \\ -4 +
\omega_2} \right]. $$
\end{examplee}
{Consider two cases that pertain to either when the  expectation is available in
closed-form (a); or not (b):}
\\
{\bf (a) Expectation $\mathbb{E}[.]$ available in closed form:}  Suppose in this instance, $\omega$ is a random variable that takes
values {$\omega^1$ of $\omega^2$, given by the following}:
$$ \omega^1 = \pmat{1 \\ 1} \mbox{ or } \omega^2 = \pmat{-1 \\ -1} \mbox{ with
	probability } 0.5.$$
Consequently, the stochastic variational inequality problem can be
expressed as 
$$ 0 \leq x \perp \left[\pmat{2 & 1 \\ 1 & 2} x + \pmat{-2 \\ -4} \right] \geq 0. $$
In fact, this problem is a {strongly} monotone linear complementarity
problem and admits a unique solution given by $x^* = (0,2).$ If one
cannot ascertain monotonicity, a common approach lies in examining
coercivity properties of the map; specifically, VI$(K,F)$ is solvable
since there exists an $x^{\rm
	ref} \in K$ such that \uss{(cf.~\cite[Ch.~2]{facchinei02finite})}
	$$ \liminf_{ \|x \| \to \infty, x \in K} F(x)^T(x-x^{\rm ref}) > 0.
	$$
{\bf (b) Expectation $\mathbb{E}[.]$ unavailable in closed-form:}
However, in many practical settings, closed-form expressions of the
expectation are unavailable. \uss{Two possible avenues are available:
\begin{enumerate}
\item[(i)] {If $K$ is compact, under continuity of the expected value
	map, VI$(K,F)$ is solvable.}
\item[(ii)]  {If there exists a single $x \in K$ that solves
	VI$(K,F(.;\omega))$ for almost every $\omega \in \Omega$, VI$(K,F)$
		is solvable. }
\end{enumerate}}
Clearly, if $K$ is a cone and (i) does not hold. Furthermore (ii) appears to be possible only for
		pathological examples and in this case, there does  not exist a single
$x$ that solves the scenario-based VI$(K,{G}(.;\omega))$ for every $\omega
\in \Omega.$ Specifically, the unique solutions to VI$(K,G(.;\omega^1))$ and
VI$(K,{G}(.;\omega^2))$ are $x(\omega^1) = (0,3/2)$
\mbox{ and }  $x(\omega^2) = (1/3,7/3),$ respectively and since
$x(\omega^1) \neq x(\omega^2)$, avenue  (ii) cannot be traversed.
Consequently, neither of the obvious approaches can be adopted yet
VI$(K,F)$ is indeed solvable with a solution given by $x^* = (0,2)$. \\

{Consequently, unless the set is compact or the scenario-based VI is
	solvable by the {\bf same vector} in an almost sure sense}, ascertaining \us{solvability} of stochastic variational inequality
problems for which the {expectation is unavailable in closed form}
does not appear to be immediately possible through known techniques. 
In what could be seen as the first step in this direction, our prior
	work~\cite{ravat11characterization2} examined the solvability of
	convex stochastic Nash games by analyzing the equilibrium
		conditions, {compactly stated as a stochastic variational inequality problem}.
		Specifically, this work relies on {utilizing}  
Lebesgue convergence theorems to develop integration-free sufficiency
conditions that could effectively address stochastic variational
inequality problems, with both single-valued and a subclass of
multi-valued maps arising from nonsmooth Nash games. As a simple illustration of the avenue adopted, consider
Example~\ref{svi} again and assume that {the expectation is
	unavailable in closed form}, we
examine whether the a.s. coercivity property holds (as presented in the
		next section).   It can be
easily seen that there exists an $x^{\rm ref} \in K$, namely $x^{\rm ref}
\triangleq {\bf 0}$, such that 
$$ \liminf_{ \|x \| \to \infty, x \in K} G(x;\omega)^T(x-x^{\rm ref}) >
0, \mbox{ for } {\omega = \omega^1 \mbox{ and } \omega = \omega^2}. 
	$$
{It will be shown that  satisfaction of this coercivity property in an almost-sure
sense is sufficient for solvability.} But such statements, as is natural
with any first step, do not accommodate stochastic quasi-variational
problems and can be refined significantly to accommodate complementarity
problems. Moreover, they cannot accommodate multi-valued variational
maps. {The present work focuses on extending such sufficiency
statements to quasi-variational inequality problems and complementarity
	problems and accommodate settings where the maps are multi-valued.}

\paragraph{Contributions:} {This paper provides amongst the first
attempts to examine { and characterize solutions for } the class of
stochastic quasi-variational inequality and complementarity problems  {
when expectations are unavailable in closed form}.  Our
contributions can briefly be summarized as  follows: \vspace{-0.1in}
\paragraph{{(i) Stochastic quasi-variational inequality problems (SQVIs)}:
} {We begin by recapping our past integration-free statements for stochastic
VIs that required the use of Lebesgue convergence theorems and
variational analysis. Additionally, we provide extensions to  regimes with
multi-valued maps and specialize the conditions for settings with
monotone maps and Cartesian sets}. 
We then extend these conditions to stochastic quasi-variational
inequality  problems where  in addition to a coercivity-like
property, the set-valued mapping needs to satisfy continuity, apart from
other ``well-behavedness'' properties to allow for concluding
solvability. Finally, we extend the sufficiency conditions to
accommodate multi-valued maps.  \vspace{-0.1in}}
\paragraph{{(ii) Stochastic complementarity problems (SCPs):}}
{Solvability of complementarity problems over cones requires a
significantly different tack. We show that analogous verifiable
integration-free statements can be provided for stochastic
complementarity problems. Refinements of such statements are also
provided in the context of co-coercive maps.}  \vspace{-0.1in}

\paragraph{(iii) Applications:} \uss{Naturally, the utility of any
sufficiency conditions is based on its level of applicability. We
describe two application problems. Of these, the first is a  nonsmooth stochastic Nash-Cournot
game which leads to an SQVI while the second is a stochastic equilibrium
problem in power markets which can be recast as a stochastic
complementarity problem. Importantly, both application settings  
are modeled with a relatively high level of fidelity. }\\

\noindent {\bf Remark:} \us{Finally, we emphasize three points  regarding the
relevance and utility of the provided statements: (i) First, such techniques
are of relevance when integration cannot be carried out easily and have
{less} utility when sample spaces are finite; (ii) There are
settings where alternate models for incorporating uncertainty have been
developed~\cite{chen12stochastic,luo3,luo2}. Such models assume
relevance when the interest lies in {\em robust} solutions. Naturally,
an expected-value formulation  has less merit in such settings and
	correspondingly, such robust approaches cannot capture risk-neutral
	decision-making.\footnote{A similarly loose dichotomy exists between
		stochastic programming and robust optimization.} Consequently,
the challenge of analyzing existence of this problem cannot be done away
with by merely changing the formulation, since an alternate formulation
may be inappropriate. (iii) Third, we present sufficiency conditions for
solvability. Still, there are simple examples which can be constructed
in finite (and more general) sample spaces where such conditions will
not hold and yet solvability does hold. We believe that this does not
diminish the importance of our results; in fact, this is not unlike
other sufficiency conditions for variational inequality problems. For
instance, we may construct examples where the coercivity of a map may
not hold over the given set~\cite{facchinei02finite} but the variational
inequality problem may be solvable.  However, in our estimation, in
some of the more practically occurring engineering-economic systems,
such conditions do appear to hold,
reinforcing the relevance of such techniques. In
particular, we show such conditions find applicability in a class of risk-neutral and risk-averse stochastic Nash games
in~\cite{ravat09characterization,ravat11characterization2}. In the present work, we show that such conditions can be
employed for analyzing a class of stochastic generalized Nash-Cournot
games with nonsmooth price functions as well as for a relatively more
intricate networked power market in uncertain settings.} Before proceeding to our results we provide a brief history of deterministic and stochastic  variational inequalities.

\paragraph{Background and literature review:} The variational inequality problem {provides} a broad and unifying
framework for the study of a range of mathematical problems including
convex optimization problems, Nash games, fixed point problems, economic
equilibrium problems and traffic equilibrium
problems~\cite{facchinei02finite}. More generally, the concept of an
equilibrium is central to the study of economic phenomena and engineered
systems, prompting the use of the variational inequality
problem~\cite{hartman66some}. Harker and Pang
\cite{harker_finite-dimensional_1990} provide an excellent survey of the
rich mathematical theory, solution algorithms, and important
applications in engineering and economics while a more comprehensive
review of the analytical and algorithmic tools is provided in the recent
two volume monograph by Facchinei and Pang~\cite{facchinei02finite}.

\gap

Increasingly, the deterministic model proves inadequate when contending
with models complicated by risk and uncertainty.   \us{Uncertainty in variational inequality problems has been considered in a breadth of
application regimes, ranging from traffic equilibrium
problems~\cite{chen12stochastic,gwinner12equilibrium}, cognitive radio
networks~\cite{koshal11single,shanbhag13tutorial}, Nash
games~\cite{ravat11characterization2,luo2}, amongst others.} Compared to the field of optimization, where stochastic
programming~\cite{birge11introduction,shapiro09lectures} and robust
optimization~\cite{bental09robust} have provided but two avenues for
accommodating uncertainty in static optimization problems, far less is
currently available either from a theoretical or an algorithmic
standpoint in the context of stochastic variational inequality problems. Much of the efforts in this regime have been
	largely restricted to Monte-Carlo sampling
	schemes~\cite{gurkan99sample, jiang08stochastic,KingRock93,
		koshal10regularized,yousefian11regularized,koshal13regularized,yousefian13regularized,ralph11asymptotic}, and
		a recent broader survey paper on stochastic variational
		inequality problems and stochastic complementarity problems
		\cite{LFukushima10}.
		
\subsection{Formulations}\label{sec:21}

To help explain the two main formulations  for stochastic variational
inequality problems found in literature - the expected value formulation
and {expected residual minimization formulation}; we now {define} the  
variational inequality problem and its generalizations as well as its stochastic
counterparts. Given a set $K \subseteq\Real^n$ and a mapping $F: \Real^n \to \Real^n$, the {deterministic} variational inequality
problem, denoted by VI$(K,F)$,  requires an $x \in
K$ such that 
\begin{align}
\tag{VI$(K,F)$} (y-x)^T  \ F(x) \ \geq \  0, \qquad \forall y \in K. 
\end{align}
The quasi-variational generalization of VI$(K,F)$ {referred to as a
quasi-variational inequality, } emerges when $K$ {is generalized from a
	constant { map} to a set-valued map }$K:\Real^n \to \Real^n$  with closed
	and convex images.  More formally, QVI$(K,F)$, requires an $x \in
	K(x)$ such that 
\begin{align}
\tag{QVI$(K,F)$} (y-x)^T  \ F(x) \ \geq \  0, \qquad \forall y \in K(x). 
\end{align}
If $K$ is a cone, then the variational inequality problem {reduces to} a
complementarity problem, denoted by CP$(K,F)$, a problem that requires an $x \in K$
such that 
\begin{align}
\tag{CP$(K,F)$} K \ \ni \ x \ \perp \  F(x)  \ \in \ K^*
\end{align}
where $K^* \triangleq \{y: y^Td \geq 0, \, \forall d \in K\}$ and $y \perp w$
implies $y_i w_i = 0$ for $ i = 1, \hdots, n.$ In settings complicated by uncertainty, stochastic
generalizations of such problems are of particular relevance. Given a
continuous probability space $(\Omega,\Fscr, \mathbb{P})$, let
$\mathbb{E}[\bullet]$ denote the expectation operator with respect to
the measure $\mathbb{P}$.  Throughout this paper, we often denote the expectation $\bkE
\left[{G}(x;\xi(\om))\right]$ by $F(x)$, where ${G}(x;\xi(\om))$ denotes the
scenario-based map, $F_i(x) \triangleq
\mathbb{E}[{G}_i(x;\xi(\omega))]$ and $\xi:\Omega \to \Real^d$ is a
$d-$dimensional random variable. For notational ease, we refer to
${G}(x;\xi(\om))$ as $G(x;\om)$ through the entirety of this paper.

\subsubsection{The expected-value formulation}\label{SVI-exp}	
We may now formally  define the {\em stochastic variational inequality problem} as an expected-value formulation. We consider this formulation in the analysis in this paper. 

\begin{definition}[{\bf Stochastic variational inequality problem
	(SVI$(K,F)$)}] \em Let $K \subseteq \Real^n$ be a closed and convex
	set,  {$G: \Real^n \times \Omega \to \Real^n$} be a single-valued map
	and $F(x) \triangleq \mathbb{E}[G(x;\omega)].$ Then the {stochastic variational inequality
		problem}, denoted by \mbox{SVI}($K,F$), requires an $x \in K$
			such that the following holds:
	\begin{align}
(y-x)^T \ F(x) & \ \geq
	\ 0, \quad \forall y \in K, \qquad \mbox{ where } F(x) \triangleq
	\mathbb{E}[G(x;\omega)]. \tag{SVI$(K,F)$}
\end{align}\hfill \qed
\end{definition}
Figure~\ref{svi} provides a schematic of the stochastic variational
inequality problem. Note that when $x$ solves SVI$(K,F)$, there may exist $\omega
\in \Omega$ for which there exist $y \in K$ such that
$(y-x)^TG(x;\omega) < 0$. 
\begin{wrapfigure}{r}{2.8in}
	\vspace{-0.4in}
	\Ebox{1}{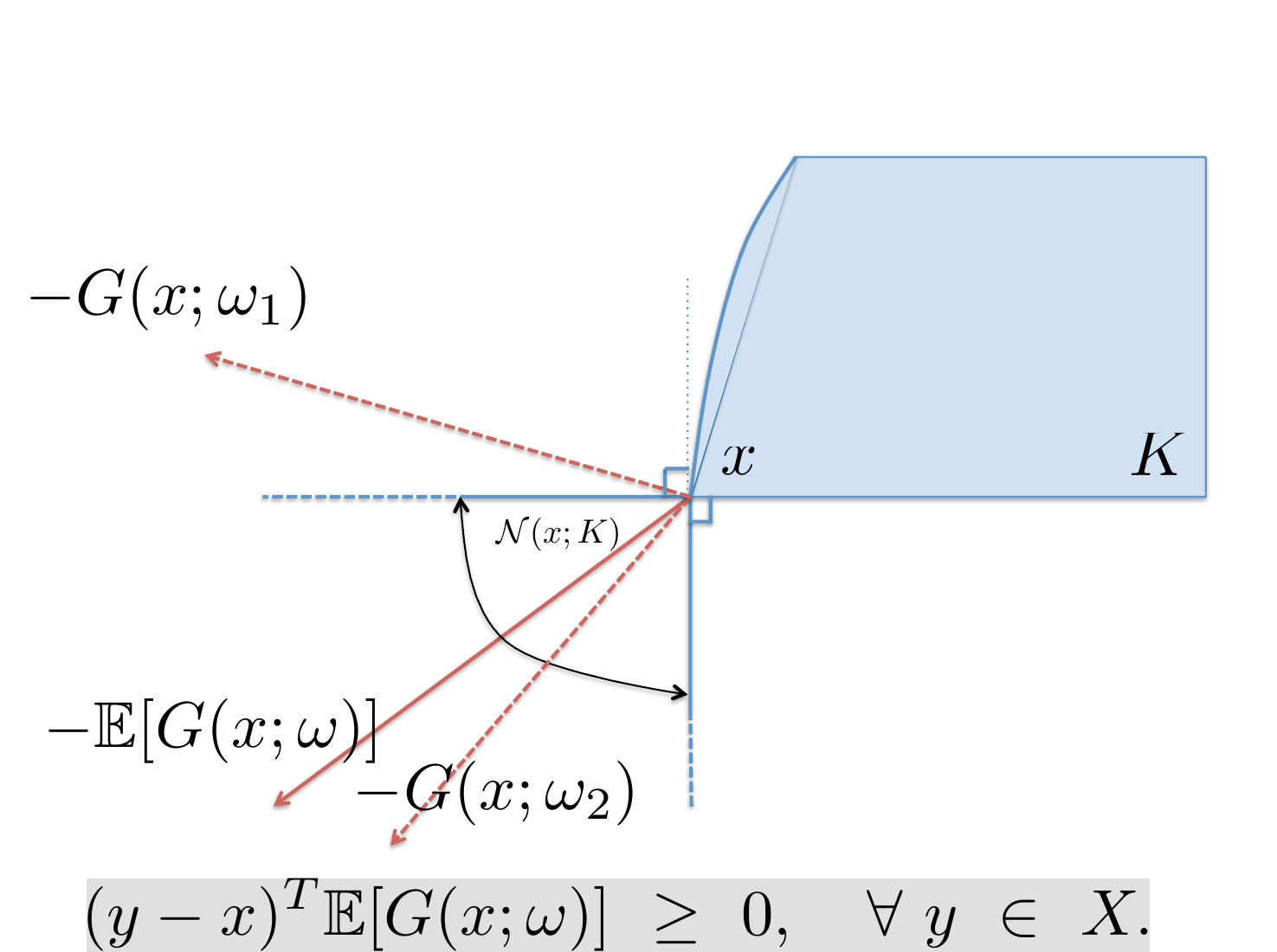}
 	\vspace{-0.1in}
\caption{The stochastic variational inequality (SVI$(K,F)$): The
	expected-value formulation}
	\label{svi}
	\vspace{-0.3in}
	\end{wrapfigure}
Naturally, in instances where the expectation is simple to evaluate, as seen in Example ~\ref{svi} earlier, the resulting SVI$(K,F)$
is no harder than its deterministic counterpart. \us{For instance, if the sample space $\Omega$ is
\us{finite}, then the expectation reduces to a finite summation of
deterministic maps which is itself a deterministic map.  Consequently,
			  the {\bf analysis} of this problem is as challenging as a
deterministic variational inequality problem. } Unfortunately, 
in most stochastic regimes, this evaluation relies on a
multi-dimensional integration and is not a straightforward task. In
fact, a more general risk functional can be introduced instead of the
expectation leading to a risk-based  variational inequality problem { that} 
requires an $x$ such that
\begin{equation*} 
( \, y - x \, )^T \, \rho[G(x;\omega)] \, \geq \, 0, \quad \forall \, y \, \in \, K,
\end{equation*}
{where $\rho[G(x;\omega)] \triangleq \mathbb{E} [{G}(x;\omega)] + \mathbb{D} [
	G(x;\omega)]$ and $\mathbb{D}[\bullet]$ is a map incorporating dispersion measures
	such as standard deviation, upper semi-deviation, or the conditional
	 value at risk (CVaR) (cf.\
			 \cite{RUZabarankin06a,RUZabarankin06b,shapiro09lectures}
for recent advances in the optimization of these risk measures).}
	 \gap

Extensions to set-valued and {specializations to} conic regimes follow naturally. For
instance, if $K$ is a point-to-set mapping defined as $K:\Real^n \to
\Real^n$, then the resulting problem is a stochastic quasi-variational
inequality, and is denoted by SQVI$(K,F)$. When $K$ is  a cone, then
VI$(K,F)$ is equivalent to a complementarity problem CP$(K,F)$ and its
stochastic generalization is given next. 
\begin{definition}[{\bf Stochastic complementarity problem
	(SCP$(K,F)$)}]  \em Let $K$ be a closed
and convex cone in $\Real^n$, {$G:\Real^n \times \Omega \to \Real^n$ }  be
a single-valued mapping and $F(x) \triangleq \mathbb{E}[G(x;\omega)].$ Then the {stochastic complementarity
		problem}, denoted by SCP($K,F)$, requires an $x
		\in K$ such that
\begin{align*}
K \ \ni \ x \ \perp \ F(x)  \ \in \ K^*,  \mbox{ where } F(x) = \bkE\left[G(x;\om)\right].
\end{align*}\hfill \qed
\end{definition}

If the integrands of the expectation {($G(x;\om)$)} are multi-valued
instead of single-valued, then we denote the mapping by $\Phi$.
Accordingly, the associated variational problems are denoted by SVI$(K,
\Phi)$, SQVI$(K,\Phi)$ and {SCP$(K,\Phi)$}. 

	\gap

	The origin of the expected-value formulation can be traced to a paper by King and Rockafellar
	\cite{KingRock93} where  the authors considered  
a {\em generalized equation} \cite{Robinson79,Robinson80}
with {an expectation-valued mapping}. Notably, the analysis and
	computation of the associated solutions are hindered significantly
	when the expectation is over a general measure space. Evaluating
	this integral is challenging, at best, and it is essential that
	specialized analytical and computational techniques be developed
	for this class of variational problems.  From an analytical standpoint,
	 Ravat and Shanbhag have developed existence statements for equilibria of stochastic Nash games that obviate
	 the need to evaluate the expectation by
	combining Lebesgue convergence theorems with standard existence
	statements ~\cite{ravat09characterization,ravat11characterization2}.  {Our earlier worked focused  on Nash games and examined such settings with
nonsmooth payoff functions and stochastic constraints. It
represents a starting point for our current study where we focus on more
general variational inequality and complementarity  problems and their generalizations and
refinements. Accordingly, this paper is motivated by the need to provide
sufficiency conditions for stochastic variational
inequality problems, stochastic quasi-variational inequality problems,
		   and stochastic complementarity problems. In addition, we
		   consider variants when the variational maps are complicated by
multi-valuednes.}  Stability statements
		for stochastic generalized equations have been provided by Liu,
		R\"{o}misch, and Xu~\cite{liu13quantitative}.\\


	 From a computational standpoint, sampling approaches have addressed
	 analogous stochastic optimization problems
	 effectively~\cite{kushner03stochastic,shap03sampling}, but have
	 focused relatively less on variational problems.  In the latter context, there have been two distinct
	threads of research effort. Of these, the first employs
	sample-average approximation schemes~\cite{shap03sampling}. In such
	an approach, the expectation is replaced by a sample-mean and the
	effort is on the asymptotic and rate analysis of the resulting
	estimators, which are obtainable by solving a deterministic
	variational inequality problem
	(cf.~\cite{gurkan99sample,ralph11asymptotic,xu10sample,xu13stochastic}).
	A rather different track is adopted by Jiang and
	Xu~\cite{jiang08stochastic} where a stochastic approximation scheme
	is developed for solving such stochastic variational inequality
	problems. Two regularized counterparts were presented by Koshal,
	Nedi\'{c}, and Shanbhag~\cite{koshal10regularized,koshal13regularized} where two
	distinct regularization schemes were overlaid on a standard
	stochastic approximation scheme (a Tikhonov regularization and a
			proximal-point scheme), both of which allow for almost-sure
	convergence. Importantly, this work also presents distributed
	schemes that can be implemented in networked regimes. A key
	shortcomings of standard stochastic approximation schemes is the
	relatively ad-hoc nature of the choice of steplength sequences.
	In~\cite{yousefian13acc}, Yousefian, Nedi\'c,
	and Shanbhag develop distributed stochastic approximation schemes
	where users can independently choose a steplength rule. Importantly,
	these rules collectively allow for minimizing a suitably defined
	error bound and are equipped with almost-sure convergence
	guarantees. \us{Finally, Wang et al.~\cite{lin1} focus on developing a sample-average approximation method for expected-value formulations of
	the stochastic variational
inequality problems while Lu and Budhiraja~\cite{lu13confidence} examine the confidence statements associated
with estimators associated with a sample-average approximation scheme
for stochastic variational inequality problem, again with
expectation-based maps.}

\subsubsection{The {expected-residual minimization (ERM)} formulation}\label{SVI-stat}

{While there are many problem settings where expected-value formulations
are appropriate (such as when modeling risk-neutral decision-making in a
		competitive regime) but there are also instances where the expected-value
formulation proves {inappropriate}. A case in point arises when attempting to
obtain solutions to a variational inequality
problem that are robust to parametric uncertainty; such problems might
arise when faced with traffic equilibrium or structural design problems.
Given a random mapping ${G}$,
the almost-sure formulation of the stochastic variational inequality
problem requires a (deterministic) vector $x \in K$ such that for almost every $\omega
\in \Omega$, 
\begin{equation} \label{eq:asvi}
(y-x)^T G(x;\omega)  \, \geq \, 0, \quad \forall \ y \, \in \, K.
\end{equation}
Naturally, if 
 $K$ is an $n-$dimensional cone, then \eqref{eq:asvi} reduces to
 CP$(K,{G(.;\omega)})$, a problem that requires an $x$ such that for almost all $\omega \in \Omega$, 
\begin{equation} \label{eq:ascp}
K \, \ni \, x \, \perp \, G(x;\omega) \, \in \, K^*.
\end{equation}
Yet, we believe that obtaining such an $x$ is possible only in
pathological settings. Instead, one approach for resolving such a problem has emerged through 
	   the minimization of  the expected residual. 
For instance, if $K \triangleq \Real^n_+$, this problem is a nonlinear complementarity
problem (NCP) and for a fixed but
arbitrary realization $\omega \in \Omega$, the residual of this system 
can be minimized as follows:
\[
\displaystyle{
{\operatornamewithlimits{\mbox{minimize}}_{x \geq 0}}
} \ \| \, \mathbf{\Phi}(x;\omega) \|,
\]
where $\mathbf{\Phi}(x;\omega)$ denotes the equation reformulation of
the NCP (See~\cite{facchinei02finite}).  
If the Fischer-Burmeister $\phi_{\rm FB}$ is chosen as the C-function,
the  {\em expected residual minimization} (ERM) problem in
\cite{CFukushima05,FCFukushima07} solves the following stochastic
program to compute a solution of the stochastic NCP (\ref{eq:ascp}):
\begin{equation}\label{eqn:ERM}
\begin{array}{ll}
\displaystyle{
{\operatornamewithlimits{\mbox{minimize}}_{x \geq 0}}
} & \mathbb{E} \left[\left\| \, \Phi_{\rm FB}(x;\omega)  \, \right\|\right],  \\ [7pt]
\mbox{where} & \Phi_{\rm FB}(x;\omega) \, \triangleq \, \left( \,
\sqrt{x_i^2 + {G}_i(x;\omega)^2} - \left( \, x_i + {G}_i(x;\omega) \, \right) \,
\right)_{i=1}^n;
\end{array} \end{equation}
see \cite[equation~(3.8)]{LFukushima10}. \us{In~\cite{luo1}, Luo and
Lin  minimize the expected residual while convergence
analysis of the expected residual minimization (ERM) technique has been
carried out in the
context of stochastic Nash games~\cite{luo2} and stochastic variational
inequality problems~\cite{luo3}}. In more recent work, Chen,
	Wets, and Zhang~\cite{chen12stochastic} revisit this problem and
	present an alternate ERM formulation, with the intent of developing
	a smoothed sample average approximation scheme. \us{In contrast with the
expected-value formulation and the almost-sure formulation, Gwinner and
Raciti~\cite{gwinner06random,gwinner12equilibrium} consider an
infinite-dimensional formulation of the variational inequality for
capturing randomness and provide discretization-based approximation
procedures for such problems. }

\gap

The remainder of the paper is organized as follows. In
section~\ref{ss:notation}, we outline our assumptions used,  motivate
our study by considering two application instances, and \uss{provide the
relevant background on integrating set-valued maps}. In section
\ref{sect:SVI}, we recap our sufficiency conditions for the solvability of
stochastic variational inequality problems with single and multi-valued
mappings and  we provide results for the
stochastic quasi-variational inequality problems with single and
multi-valued mappings. Refinements of the statements for SVIs are
provided for the complementarity regime in Section \ref{sect:SCP} under
varying assumptions on the map.  Finally, in section
\ref{sect:examples_applied}, we revisit the motivating examples of
section \ref{ss:notation} and verify that the results developed in this
paper are indeed applicable.

\section{Assumptions, examples, and background}\label{ss:notation}
In Section~\ref{sec:22}, we provide a summary of
the main assumptions employed in the paper. The utility of such models
\us{is demonstrated}  by discussing some motivating examples in
Section~\ref{sec:23}. Finally, some background is provided on the
integrals of set-valued maps in Section~\ref{ss:set-valued background}.

\subsection{Assumptions}\label{sec:22}
We now state the main assumptions used throughout the paper and refer to them
when appropriate. The first of these pertains to the probability space.
\begin{assumption}[Nonatomicity of measure $\bkP$]\label{nonatomic}\em
The probability space $\Pscr = ( \Omega, \Fscr, \bkP)$ is nonatomic.
\end{assumption}
The next  assumption focuses on the properties of the single-valued map, referred to as
$F$. 
\begin{assumption}[Continuity and integrability of
$F$]\label{assumption1}\em
\begin{itemize}
\item[]
\item[(i)] $\us{G}: \Real^n \times \Omega \to \Real^n$ is a single-valued
map. Furthermore, $G(x;\om)$  is continuous in $x$ for almost every $\om
\in \Omega$ and is integrable in $\om$, for every $x$.
\item[(ii)]  $F(x) = \mathbb{E}[G(x;\omega)]$ is continuous in $x$.
\end{itemize}
\end{assumption}
Note that, the assumption of Lipschitz continuity of $G(x;\om)$ with an
integrable Lipschitz constant implies that $\bkE[G(x;\om)]$ is also
Lipschitz  continuous. {We now recall the definition of integrably
	bounded set-valued maps to be used in the assumption to follow.
\begin{definition}[Integrably bounded set-valued map]
\em A set-valued map
	 $H$ that maps from $\Omega$ into nonempty, closed subsets of $\Real^n$ is integrably bounded if there exists a nonnegative integrable function $k \in L^1(\Omega, \Real ,\bkP)$ such that $$H(\om) \subseteq k(\om) B(0,1) \mbox{ almost everwhere}.$$
\end{definition}
}
The next two assumptions pertain to the
set-valued maps employed in this paper. 
When the map is multi-valued, to avoid confusion, we employ the notation
$\Phi(x)$, defined as $\Phi(x) \triangleq \mathbb{E}[\Phi(x;\om)]$, and impose the following assumptions. 
\begin{assumption}[Lower semicontinuity and integrability of
$\Phi$]\label{lsc}\em $\Phi: \Real^n \times \Omega \to 2^{\Real^n}$ is a
set-valued map satisfying the following:
\begin{itemize}
\item[(i)]  $\Phi(x;\om)$  has nonempty and closed images for every $x$
and every $\omega \in \Omega$.
\item[(ii)] \hspace{-0.021in}$\Phi(x;\om)$ is lower semicontinuous in $x$ for almost all
$\om \in \Omega$ and integrably bounded for every $x$.
\end{itemize}
\end{assumption}
Finally, when considering quasi-variational inequality problems, $K$ is
a set-valued map, rather than a constant map. 
\begin{assumption}\label{assumption-det-stratset}\em
The set-valued map $K: \Real^n \to 2^{\Real^n}$ is deterministic, closed-valued, convex-valued. 
\end{assumption}

We conclude this subsection with some notation. $\textrm{cl}(U)$ denotes the closure of a set $U \subset
\Real^n$, $\textrm{bd}(U)$ denotes the boundary of the set $U$ and
$\textrm{dom}(K)$ denotes the domain of the mapping $K$.

\subsection{Examples}\label{sec:23}

We  now provide two instances of where stochastic
variational  problems  arise in practice. 

\paragraph{Nonsmooth stochastic Nash-Cournot equilibrium problems:}\label{ss:ex:STEP}

Cournot's oligopolistic model is amongst the most widely used
models for modeling strategic interactions between a set of
noncooperative firms~\cite{fudenberg91game}. Under an assumption that firms produce a
homogenous product, each firm attempts to maximize profits by making a
quantity decision while taking as given, the quantity of its rivals.
Under the Cournot assumption, the price of the good is assumed to be
dependent on the aggregate output in the market. The resulting Nash
equilibrium, qualified as the Nash-Cournot equilibrium, represents a set
of quantity decisions at which no firm can increase  profit through a
unilateral change in quantity decisions.  unilaterally changing the
quantity of the product it produces. {To accommodate uncertainty in
costs and prices in Nash-Cournot models and loss of
differentiability of price functions which can occur for example
		by introduction of price caps ~\cite{hobbs07nash} we consider a
		stochastic generalization of the classical deterministic
		Nash-Cournot model and allow for piecewise smooth price
		functions, as captured by the following assumption on costs and
prices.}
\begin{assumption}\label{nc}\em
Suppose the cost function $c_i(x_i)$  is an increasing 
convex {twice}
continuously differentiable function for all  $i = 1, \hdots, N$.
Let $X \triangleq \sum_{i=1}^n x_i$. Since $x_i$ denotes the quantity produced, $x_i \geq 0$. The price function $p(X;\omega)$ is a decreasing piecewise smooth convex function
	where $p(X;\om)$ is given by 
	\begin{align}\label{eqn:nc_price}
	p(X;\omega) =  \begin{cases}
									p^1(X;\om), & 0 \leq X \leq \beta^1 \\
									p^j(X;\om),  & \beta^{j-1} \leq X \leq \beta^j, j =
									2, \hdots, s \\
									p^s(X;\om),& \beta^s \leq X
								\end{cases}
	\end{align}
	 where $p^j(X;\om) = a^j(\omega) - b^j(\omega) X$ is a strictly
	 decreasing affine function of $X$ for $j = 1, \hdots, s$. Finally,  
	$\beta^1,\hdots, \beta^s$ are a set of increasing positive scalars
	and $(a^j(\omega),b^j(\om))$ are positive in an almost-sure sense and integrable for $j = 1,
	\hdots, s.$
\end{assumption} 
Consider an $N-$player generalized Nash-Cournot game.  Given the tuple
of rival strategies $x_{-i}$, the $i$th
player's strategy set is given by $K_i(x_{-i})$ while his objective
function is given by
$\bkE [f_i(x;\om)] \triangleq c_i(x_i) - \bkE [p(x;\om)x_i]$.
 Then $ \{x_i^*\}_{i=1}^N$ denotes a stochastic Nash-Cournot equilibrium if $x_i^*$ solves the convex optimization
problem G$_i(x^*_{-i})$, defined as 
\[ \begin{array}{ll} 
\displaystyle {\operatornamewithlimits{\mbox{minimize}}_{x_i}}
 & \displaystyle \mathbb{E}  [f_i(x;\om)] \\
				\st	&	x_i \in K_i(x_{-i}).
\end{array} \]
The equilibrium conditions of this problem are given a stochastic QVI
 with multi-valued mappings. In
section~\ref{sect:examples_applied}, we revisit this problem with the intent of developing {statements about existence of solutions}

\paragraph{Strategic behavior in power markets:}\label{ss:ex:NC}
Consider a power market model in which a collection of generation firms
compete in a single-settlement market. Economic equilibria in power markets has been extensively studied using
a complementarity framework; see \cite{hobbs07nash,
hobbs04complementarity}. Our model below is based on the model of
Hobbs and Pang \cite{hobbs07nash} which we modify to account for
uncertainty in prices and costs. 

 \gap

 Consider a set of nodes $\Nscr$ of a network. The set of generation
 firms is indexed by $f$, where $f$ belongs to the finite set $\Fscr$.
 At a node $i$ in the network, a firm $f$ may generate $g_{fi}$ units at
 node $i$ and sell $s_{fi}$ units to node $i$. The total amount of power
 sold to node $i$ by all generating firms is $S_i$. The generator firms'
 profits are revenue less costs.  If the nodal power price at node $i$th
 is a random function given by $p_i(S_i;\om)$ where $p_i(S_i;\om)$ is a
 decreasing function of $S_i$, then the firms' revenue is just the price
 times sales $s_{fi}$.  The costs incurred by the firm $f$ at node $i$
 are the costs of generating $g_{fi}$ and transmitting the excess
 $(s_{fi} -g_{fi})$.  Let the cost of generation associated with firm
 $f$ at node $i$ be given by $c_{fi}(g_{fi};\omega)$ and the cost of
 transmitting power from an arbitrary node (referred to as the hub) to
 node $i$ be given by $w_i$. The constraint set ensures a balance
 between sales and generation at all nodes, nonnegative sales and
 generation and generation subject to a capacity limit.  The price and
 cost functions are assumed to satisfy the following requirement.
 
 \begin{assumption}\label{assump:power}\em
For $i \in \Nscr$, the price function $p_i(S_i;\om)$ is a decreasing
function { with its  absolute value } bounded above by a nonnegative integrable function 
$\bar p_i(\omega)$. Furthermore, the cost functions $c_{fi}(g_{fi};\om)$ are
nonnegative and  increasing. 
\end{assumption}

\gap

The resulting problem faced by generating firm $f$ requires determining
sales $s_{fi}$ and generation $g_{fi}$ at all nodes $i$ and is captured
as follows:
\[ 
\begin{array}{ll} 
\displaystyle{
{\operatornamewithlimits{\mbox{maximize}}_{s_{fi}, \, g_{fi}}}
} & \mathbb{E}\left[\displaystyle{
\sum_{i\in \Nscr}
} \, \left( \, p_i(S_i;\om)s_{fi} - c_{fi}(g_{fi};\om)
		-(s_{fi}-g_{fi})w_i \, \right)\right] \\ [0.2in]
\mbox{subject to} &
\left\{ \begin{array}{lll}
0 & \leq & g_{fi} \, \leq \, {\rm cap}_{fi} \\ [5pt]
0 & \leq & s_{fi} \end{array} \right\}, \quad \forall \, i \in \Nscr \\ [0.4in]
\mbox{and} & \displaystyle{
\sum_{i \in \Nscr}
} \, ( \, s_{fi} - g_{fi} \, ) \, = \, 0.
\end{array}
\]
Note that, the generating firm sees the transmission fee $w_i$ and the rival firms' sales $s_{-fi} \equiv \{s_{hi}~:~ h \neq f\}$ as exogenous parameters to its optimization problem even though they are endogenous to the overall equilibrium model as we will see shortly.  The ISO sees the transmission fees $w=(w_i)_{i \in \Nscr}$ as exogenous and prescribes flows $y = (y_i)_{i \in \Nscr}$ as per the following linear program
\[ \begin{array}{ll} 
\displaystyle{
{\operatornamewithlimits{\mbox{maximize}}}
} & \displaystyle{
\sum_{i \in \Nscr}
} \, y_i w_i \\ [0.2in]
\mbox{subject to} & \displaystyle{
\sum_{i \in \Nscr}
} \, {\rm PDF}_{ij} y_i \, \leq \, T_j, \qquad \forall \, j \in \Kscr,
\end{array} \]
where $\Kscr$ is the set of all arcs or links in the network with node set $\Nscr$, $T_j$ denotes the transmission capacity of link $j$, $y_i$ represents the transfer of power (in MW) by the system operator from a hub node to node node $i$ and  PDF$_{ij}$
denotes the power transfer distribution factor,
which specifies the MW flow through link $j$ as a consequence of
unit MW injection at an arbitrary hub node and a unit withdrawal at node $i$.

\gap

Finally,  to clear the market, the transmission flows $y_i$ must must balance the net sales at each node:
\[
y_i \, = \, \displaystyle{
\sum_{f \in \Fscr}
} \, \left( \, s_{fi} - g_{fi} \, \right), \qquad \forall \, i \in \Nscr.
\]

The above market equilibrium  problem  which comprises of each firm's problem, the ISO's problem and market-clearing condition, can be expressed as a stochastic complementarity problem by following the technique from \cite{hobbs07nash}. This equivalent formulation of the above market equilibrium problem is described in the last section \ref{sect:examples_applied}. We also illustrate the  solvability of such problems using the framework developed in this paper.

\subsection{Background on integrals of set-valued mappings}\label{ss:set-valued background}

 {Recall that by Assumption~\ref{nonatomic}, $(\Omega,\Fscr, \bkP)$ is a
	 nonatomic continuous probability space.} Consider a set-valued map
	 $H$ that maps from $\Omega$ into nonempty, closed subsets of $\Real^n$ . We
	 recall three definitions from \cite[Ch.~8]{aubin90setvalued}
	 \begin{definition}[Measurable set-valued map]
\em	 A map $H$ is measurable if the inverse image of each open set in $\Real^n$ is
a measurable set: for all open sets $O\subseteq \Real^n $, we have
$$ H^{-1}(O) = \left\{ \om \in \Omega \mid H(\om) \cap O \neq \emptyset \right\} \in \Fscr.$$
\end{definition}


\begin{definition}[Measurable
selection]
\em Suppose a measurable map $h:\Omega \to \Real^n$ satisfies $h(\om) \in
 H(\om)$  for almost all $ \om
 \in \Omega$. Then $h$ is called a measurable selection of
 $H$.
\end{definition}

The existence of a measurable selection is proved in \cite[Th.~8.1.3]{aubin90setvalued}.
\begin{definition}[Integrable selection]\em
 A measurable selection $h:\Omega \to \Real^n$ is an integrable
 selection if $\mathbb{E} [h(\omega)] < \infty$ where
 $$ \mathbb{E}[h(\omega)] = \int_\Omega h d\bkP < \infty.$$
\end{definition}
The set of all integrable selections of $H$ is denoted by $\Hscr$ and is
	defined as follows:
\begin{align*}
	\Hscr & \triangleq \left\{ h \in L^1(\Omega, \Fscr,\bkP):  h(\om)\in
	H(\om) \hbox{ for almost all } \om \in \Omega\right\}, 
	\end{align*}
\begin{definition}[Expectation of a set-valued map]
\em The expectation of the set-valued map $H$, denoted by
$\bkE[H(\om)]$, is the set of integrals of integrable selections of $H$:
$$\bkE[H(\om)] \triangleq \int_\Omega H d\bkP \triangleq  \left\{ \int_\Omega h
d\bkP \mid h \in \Hscr \right\}.$$
\end{definition}
{If the images of $H(\omega)$ are convex then this set-valued integral
is convex~\cite[Definition 8.6.1]{aubin90setvalued}.  If the assumption
of convexity of images of $H$ does not hold, then the convexity of this
integral follows from Th.~8.6.3~\cite{aubin90setvalued}, provided that
the probability measure is nonatomic.} We make precisely such an
assumption (See Assumption~\ref{nonatomic}) and are therefore
guaranteed that the integral of the set-valued map $H$ is a
convex set \cite[Th.~8.6.3]{aubin90setvalued}. 

\gap

Recall that a point $\bar{z}$ of a convex set $K$ is said to be
extremal if there are no two points $x,y \in K$ such that $\lambda x +
(1-\lambda) y = \bar{z}$ for $\lambda \in (0,1)$ and is denoted by
$\zbar \in \textrm{ext} (K).$ Similarly, as per
Def.~8.6.5~\cite{aubin90setvalued}, we define an extremal selection as
follows:
\begin{definition}[Extremal selection]\em
Given the convex set $\int_\Omega H d\bkP$, an integrable selection $h \in \Hscr$ is an extremal
selection of $H$ if $$ \int_{\Omega} h d\bkP \hbox{ is an extremal
	point of the closure of the convex set } \int_{\Omega} H d\bkP. $$
\end{definition}
The set of all extremal selections is denoted by $\Hscr_e$ and is
defined as follows: 
$$ \Hscr_e \triangleq \left\{ h \in \Hscr\mid \int_{\Omega} h d\bkP
\in \textrm{ext}  \left( \textrm{cl} \left(\int_{\Omega} H
			d\bkP \right) \right)\right\}. $$
By Theorem ~\cite[Th.~8.6.3]{aubin90setvalued}, we have the following
Lemma for the representation of extremal points of closure of
$\int_\Omega H d \bkP$.
\begin{theorem}[Representation of extreme points of set-valued integral
{\cite[Th.~8.6.3]{aubin90setvalued}}]\label{th:rep_ext_pt} \em Suppose
Assumption~\ref{nonatomic} holds and let $H$ be a measurable set-valued
map from $\Omega$ to subsets of $\Real^n$  with nonempty closed images.
Then the following hold: 
\begin{itemize}
\item[(a)] $\int_\Omega H d\bkP $ is convex and extremal points of
${\textrm{cl}}(\int_\Omega H d\bkP)$ are contained in $\int_\Omega H d\bkP$.
\item[(b)] If $ x \in {\textrm{ext}}\left( {\textrm{cl}}\left(\int_\Omega H  d\bkP\right)\right) $, then there exists a unique $h \in \Hscr_e$ with $x = \int_\Omega h d\bkP$.
\item[(c)] If $H$ is integrably bounded, then the integral $\int_\Omega H d\bkP $  is also compact.
\end{itemize}
\label{th:int_cpt}
\end{theorem}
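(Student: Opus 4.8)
The entire argument will rest on a single engine: Lyapunov's convexity theorem for nonatomic vector measures, which asserts that if $\mu:\Fscr\to\Real^m$ is a finite nonatomic vector measure then its range $\{\mu(A):A\in\Fscr\}$ is convex (and compact when $\mu$ has finite variation). Assumption~\ref{nonatomic} is exactly what makes this tool available, and I would reduce every clause of the theorem to it. For the convexity asserted in (a), the plan is to take two integrable selections $h_1,h_2\in\Hscr$ and a scalar $\lambda\in(0,1)$ and exhibit an integrable selection $h$ with $\int_\Omega h\,d\bkP=\lambda\int_\Omega h_1\,d\bkP+(1-\lambda)\int_\Omega h_2\,d\bkP$. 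Applying Lyapunov to the $\Real^n$-valued nonatomic measure $A\mapsto\int_A(h_1-h_2)\,d\bkP$, whose range is convex and contains both $0$ and $\int_\Omega(h_1-h_2)\,d\bkP$, I obtain a set $A\in\Fscr$ with $\int_A(h_1-h_2)\,d\bkP=\lambda\int_\Omega(h_1-h_2)\,d\bkP$; the spliced map $h\triangleq h_1\mathbf 1_A+h_2\mathbf 1_{\Omega\setminus A}$ is then a measurable integrable selection of $H$ realizing the prescribed convex combination.

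For the compactness in (c), under an integrable bound $H(\om)\subseteq k(\om)B(0,1)$ the integral lies in $\left(\int_\Omega k\,d\bkP\right)B(0,1)$ and is therefore bounded; closedness I would obtain by taking $x_j=\int_\Omega h_j\,d\bkP\to x$, using the integrable bound to secure uniform integrability of $\{h_j\}$, extracting a weakly $L^1$-convergent subsequence by Dunford--Pettis, and passing to the limit. A Mazur-type argument then shows the weak limit is again a selection, where nonatomicity lets me replace $H$ by its closed convex hull without altering the integral. Compactness yields $\textrm{cl}(\int_\Omega H\,d\bkP)=\int_\Omega H\,d\bkP$, which immediately settles the attainment clause of (a) in the integrably bounded case.

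The uniqueness in (b) I expect to be clean, via the same splicing idea: if $x$ is an extreme point of the convex set $\textrm{cl}(\int_\Omega H\,d\bkP)$ and $h_1,h_2\in\Hscr_e$ both integrate to $x$, then for every $A\in\Fscr$ the two spliced selections $h_1\mathbf 1_A+h_2\mathbf 1_{\Omega\setminus A}$ and $h_2\mathbf 1_A+h_1\mathbf 1_{\Omega\setminus A}$ integrate to $x\mp\nu(\Omega\setminus A)$, where $\nu(B)\triangleq\int_B(h_1-h_2)\,d\bkP$. Since $x$ is the midpoint of these two points of $\int_\Omega H\,d\bkP\subseteq\textrm{cl}(\int_\Omega H\,d\bkP)$ and is extreme, one forces $\nu(\Omega\setminus A)=0$ for every $A$, hence $\nu\equiv 0$ and $h_1=h_2$ almost everywhere.

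The genuine obstacle is the attainment clause of (a) in the absence of an integrable bound, namely that an extreme point of $\textrm{cl}(\int_\Omega H\,d\bkP)$ actually lies in $\int_\Omega H\,d\bkP$ and is represented by an \emph{extremal} selection. Here the closure need not equal the integral, so compactness is unavailable. My plan would be to approximate $H$ by the integrably bounded truncations $H_m(\om)\triangleq H(\om)\cap mB(0,1)$, apply the bounded case to each $H_m$, and build a measurable extremal selection by a limiting argument supported by the measurable selection theorem~\cite[Th.~8.1.3]{aubin90setvalued}. Controlling this limit so that the resulting selection is integrable and genuinely extremal is the crux of the matter; since the statement is quoted verbatim from~\cite[Th.~8.6.3]{aubin90setvalued}, I would ultimately defer the fully rigorous attainment argument to that reference.
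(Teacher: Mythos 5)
A preliminary point: the paper contains no proof of this statement. It is imported verbatim, with citation, from \cite[Th.~8.6.3]{aubin90setvalued}, and the surrounding text treats it as a black box whose output feeds Lemma~\ref{lemma:selections}, Corollary~\ref{th:rep_ele_integral}, and the multi-valued existence results. So there is no in-paper argument to compare yours against; the only fair benchmark is the reference's proof, which is also where you ultimately defer.

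On the merits, the pieces you do prove are correct and are the standard arguments: the Lyapunov splicing $h=h_1\chi_A+h_2\chi_{\Omega\setminus A}$ is exactly the classical Richter--Aumann proof of convexity, and your uniqueness argument for (b) is clean (the two spliced selections integrate to $x\mp\nu(\Omega\setminus A)$, extremality of $x$ forces $\nu\equiv 0$, hence $h_1=h_2$ a.e.). The genuine gap is twofold. First, the attainment clause --- that extreme points of $\textrm{cl}\left(\int_\Omega H\,d\bkP\right)$ lie in $\int_\Omega H\,d\bkP$ and are realized by extremal selections --- is the actual content of a theorem titled ``representation of extreme points,'' and it is precisely the part the paper consumes downstream (Lemma~\ref{lemma:selections} builds every point of $\bkE[\Phi(x;\om)]$ out of extremal selections); deferring it to the very theorem being proved makes the proposal a partial verification rather than a proof. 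Second, your compactness argument for (c) is not self-contained either: Dunford--Pettis plus Mazur only yields a weak limit $h$ with $h(\om)\in\overline{\co}\,H(\om)$ a.e., and to land back in $\int_\Omega H\,d\bkP$ you invoke ``nonatomicity lets me replace $H$ by its closed convex hull without altering the integral,'' i.e., Aumann's identity $\int_\Omega H\,d\bkP=\int_\Omega \overline{\co}\,H\,d\bkP$. That identity is a theorem of the same depth as the attainment clause and is proved by the same Lyapunov/extreme-point machinery you deferred, so clause (c) rests on the missing part as well. (A smaller issue: the truncations $H_m(\om)=H(\om)\cap mB(0,1)$ can have empty images for small $m$, so even the approximation scaffolding for the deferred step needs repair, e.g., truncating around a fixed measurable selection.)
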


As a corollary to the above theorem, we have a representation of points
in a set-valued integral as follows.
\begin{corollary}[Representation of points in a set-valued integral {\cite[Th.~8.6.6]{aubin90setvalued}}]\label{th:rep_ele_integral} \em
Let $H$ be a measurable integrably bounded set-valued map from $\Omega$
to subsets of $\Real^n$ with nonempty closed images. If $\bkP$ is
nonatomic, then for every $x \in \int_\Omega H d\bkP$, there exist $n+1$ extremal selections $h_k \in \Hscr_e$ and $n+1$ measurable sets $A_k \in \Fscr$, $k =0,\cdots, n$, such that
$$x = \displaystyle \int_\Omega \left( \sum_{k=1}^n \chi_{\tiny A_k}h_k \right) d\bkP $$
where $\chi_{A_k}$ is the characteristic function of $A_k$.
\end{corollary}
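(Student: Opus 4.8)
The plan is to combine three ingredients: the structural facts about the set-valued integral already recorded in Theorem~\ref{th:rep_ext_pt}, the classical Carath\'eodory--Minkowski theorem in $\Real^n$, and the Lyapunov-type convexity afforded by nonatomicity of $\bkP$. First I would invoke Theorem~\ref{th:rep_ext_pt} to note that $C \triangleq \int_\Omega H\, d\bkP$ is convex by part~(a) and compact by part~(c), the latter using integrable boundedness of $H$. Fix $x \in C$. Since $C$ is a compact convex subset of $\Real^n$, Minkowski's theorem expresses $x$ as a convex combination of extreme points of $C$, and Carath\'eodory's theorem bounds the number needed by $n+1$; thus there are $x_0,\ldots,x_n \in \textrm{ext}(C)$ and weights $\lambda_0,\ldots,\lambda_n \geq 0$ with $\sum_{k=0}^n \lambda_k = 1$ such that $x = \sum_{k=0}^n \lambda_k x_k$. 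By Theorem~\ref{th:rep_ext_pt}(b), each extreme point is the integral of a \emph{unique} extremal selection, $x_k = \int_\Omega h_k\, d\bkP$ with $h_k \in \Hscr_e$, so linearity of the integral gives $x = \int_\Omega \big(\sum_{k=0}^n \lambda_k h_k\big)\, d\bkP$.

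The central step is then to convert this pointwise convex combination into a selection that, at almost every $\omega$, equals a single one of the $h_k(\omega)$. To this end I would introduce the finite-valued set-valued map $G(\omega) \triangleq \{h_0(\omega),\ldots,h_n(\omega)\}$, which is measurable with nonempty closed images and is integrably bounded since each $h_k$ is. Its integral satisfies $\int_\Omega \co(G)\, d\bkP = \int_\Omega G\, d\bkP$, because nonatomicity of $\bkP$ forces the set-valued integral to be convex (this is precisely the mechanism behind Theorem~\ref{th:rep_ext_pt}(a) applied to $G$, equivalently Lyapunov's convexity theorem). Since $\sum_{k=0}^n \lambda_k h_k(\omega) \in \co(G(\omega))$ for almost every $\omega$, the point $x$ lies in $\int_\Omega \co(G)\, d\bkP$ and hence in $\int_\Omega G\, d\bkP$, so there exists a measurable selection $g$ of $G$ with $x = \int_\Omega g\, d\bkP$.

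Finally I would read off the partition. Because $g(\omega) \in \{h_0(\omega),\ldots,h_n(\omega)\}$ for almost every $\omega$, the sets $A_k \triangleq \{\omega : g(\omega) = h_k(\omega)\}$ are measurable and, after removing overlaps so that they are disjoint and exhaust $\Omega$, form a measurable partition with $g = \sum_{k=0}^n \chi_{A_k} h_k$ almost everywhere, which delivers the claimed representation $x = \int_\Omega \big(\sum_{k=0}^n \chi_{A_k} h_k\big)\, d\bkP$. I expect the main obstacle to be the convexity identity $\int_\Omega \co(G)\, d\bkP = \int_\Omega G\, d\bkP$: showing that the integral of the finite-valued map already contains the integral of its convex hull is exactly where nonatomicity is indispensable, and it is the only nonelementary input beyond Theorem~\ref{th:rep_ext_pt}. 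A secondary technical point is the disjointification of the $A_k$ into a genuine partition; I would also note that, to match the Carath\'eodory decomposition, the summation in the statement should run over $k = 0,\ldots,n$ rather than $k=1,\ldots,n$.
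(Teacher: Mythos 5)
Your proposal is correct, and it supplies something the paper itself does not: the paper states this corollary without proof, importing it wholesale from Aubin--Frankowska (Th.~8.6.6), whereas you actually derive it from the previously stated Theorem~\ref{th:rep_ext_pt}. Your architecture --- compactness and convexity of $\int_\Omega H\, d\bkP$ from parts (a) and (c), Minkowski/Carath\'eodory to write $x = \sum_{k=0}^n \lambda_k x_k$ over extreme points, part (b) to realize each $x_k$ as $\int_\Omega h_k\, d\bkP$ for a unique extremal selection $h_k \in \Hscr_e$, and nonatomicity to convert the convex combination into a switching selection --- is exactly the mechanism underlying the cited theorem, so you have made explicit what the paper leaves as a citation. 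One technical remark: the identity $\int_\Omega \co(G)\, d\bkP = \int_\Omega G\, d\bkP$ that you single out as the main obstacle is both stronger than what you need and not fully justified by your parenthetical, since convexity of $\int_\Omega G\, d\bkP$ alone does not handle selections of $\co(G)$ whose Carath\'eodory weights vary with $\omega$; the general identity is Aumann's relaxation theorem, proved again via Lyapunov. But you can bypass it entirely: your weights $\lambda_k$ are \emph{constants}, each $x_k = \int_\Omega h_k\, d\bkP$ lies in $\int_\Omega G\, d\bkP$ because $h_k$ is by construction a selection of $G$, and $\int_\Omega G\, d\bkP$ is convex by Theorem~\ref{th:rep_ext_pt}(a) applied to $G$ (its finite images are nonempty and closed, and integrable boundedness is inherited from the $h_k$); hence $x = \sum_{k=0}^n \lambda_k x_k \in \int_\Omega G\, d\bkP$ immediately, and the rest of your argument --- measurability of $A_k = \{\omega : g(\omega) = h_k(\omega)\}$, disjointification, and $g = \sum_{k} \chi_{A_k} h_k$ almost everywhere --- goes through verbatim. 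With that one-line replacement the proof is airtight, and your closing observation that the summation in the statement should run over $k = 0, \ldots, n$ rather than $k = 1, \ldots, n$ is also correct.
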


\section{Stochastic quasi-variational  inequality problems}\label{sect:SVI}

In this section, we develop sufficiency conditions for the solvability
of stochastic quasi-variational  inequality problems under a diversity of
assumptions on the map. More specifically, we begin by recapping
sufficiency conditions for the solvability of stochastic variational inequality problems with
single-valued and multi-valued maps in Section~\ref{subsect:SVIF}. In many settings, the variational inequality problems may prove
incapable of capturing the problem in question. For instance, the
equilibrium conditions  of convex  generalized Nash games are given by a
quasi-variational inequality problem. As mentioned earlier, when the
constant map $K$ is replaced by a set-valued map $K: \Real^n \to
\Real^n$, the resulting problem is an SQVI. In this section, we extend
the sufficiency conditions presented in the earlier section to
accommodate the SQVI$(K,F)$ (Section \ref{subsect:sqvi-single})
and SQVI$(K,\Phi)$ (Section \ref{subsect:sqvi-multi}), respectively.
Throughout this section, Assumption \ref{assumption-det-stratset} holds
for the set-valued map $K$.
\subsection{SVIs with single-valued and multi-valued mappings}\label{subsect:SVIF}
In this section, we \uss{begin by  assuming that} the scenario-based mappings {$G(\bullet;\om)$} are
single-valued for each $\om \in \Omega$. With this assumption, we
provide sufficient conditions that the scenario-based {VI$(K,G(\bullet; \om))$}
must satisfy in order to conclude the  existence of solution to the
stochastic SVI($K,F$) without  requiring  the evaluation of expectation
operator. {Recall that in SVI($K,F$), $F(x) = \bkE[G(x;\om)]$.} In
particular, in the next proposition, we show that if a certain
coercivity condition holds for the scenario-based map {$G(\bullet;\om)$} in an
almost-sure sense then existence of the solution to the above SVI may be
concluded without resorting to formal evaluation of the expectation. 

\begin{proposition}[Solvability of SVI($K,F$)]\label{prop-smooth-exist}
\em
Consider a stochastic variational inequality  SVI$(K,F)$.  Suppose
Assumption \ref{assumption1} holds.
Suppose there exists an $\xref \in K$ such that  the following hold:
\begin{itemize}
\item[(i)] $ \displaystyle \liminf_{\|x\| \to \infty, x \in K} \left[ G(x;\om)^T(x-\xref)\right] > 0 \text{ almost surely;}$
\item[(ii)] Suppose there exists a nonnegative integrable function
{$u(\omega)$} such that ${G(x;\om)^T(x-\xref)}  \geq -u(\om)$ holds almost surely for
any $x$.
\end{itemize}
	Then the stochastic  variational inequality  SVI$(K,F)$ has a
	solution.
\end{proposition}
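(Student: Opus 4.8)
The overall strategy is to establish that the \emph{expected-value} map $F(x)=\bkE[F(x;\om)]$ inherits a coercivity property from the almost-sure coercivity of the scenario maps, and then to invoke the standard deterministic existence theorem for VIs under coercivity (cf.~\cite[Ch.~2]{facchinei02finite}). Recall that a sufficient condition for solvability of a deterministic VI$(K,F)$ with continuous $F$ over a closed convex $K$ is the existence of $\xref\in K$ with
\begin{equation*}
\liminf_{\|x\|\to\infty,\,x\in K} F(x)^T(x-\xref) > 0.
\end{equation*}
Since Assumption~\ref{assumption1}(ii) already gives continuity of $F(x)$, the entire burden is to transfer the almost-sure coercivity hypothesis~(i) on $F(x;\om)$ to the coercivity of the integrated map $F(x)=\bkE[F(x;\om)]$. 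In other words, I want to show
\begin{equation*}
\liminf_{\|x\|\to\infty,\,x\in K} \bkE\!\left[F(x;\om)^T(x-\xref)\right] > 0,
\end{equation*}
after which the result follows immediately from the cited deterministic theorem.

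First I would fix the same reference point $\xref\in K$ furnished by the hypotheses and write $G(x;\om)=F(x;\om)^T(x-\xref)$, so that $\bkE[G(x;\om)] = F(x)^T(x-\xref)$ by definition of the expected-value map. Then I would take any sequence $\{x^k\}\subseteq K$ with $\|x^k\|\to\infty$ and analyze the scalar sequence $\bkE[G(x^k;\om)]$. By hypothesis~(i), for almost every $\om$ the quantity $\liminf_{k} G(x^k;\om)$ is bounded below by the positive liminf value (indeed, $\liminf$ over the full net dominates any subsequential liminf), and by hypothesis~(ii), $G(x^k;\om)\ge -u(\om)$ with $u$ integrable, providing a uniform integrable lower envelope. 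This is precisely the setup for \textbf{Fatou's lemma} applied to the shifted nonnegative integrands $G(x^k;\om)+u(\om)\ge 0$:
\begin{equation*}
\liminf_{k\to\infty}\bkE\!\left[G(x^k;\om)+u(\om)\right] \;\ge\; \bkE\!\left[\liminf_{k\to\infty}\big(G(x^k;\om)+u(\om)\big)\right].
\end{equation*}
Subtracting the finite constant $\bkE[u(\om)]$ from both sides yields $\liminf_k \bkE[G(x^k;\om)] \ge \bkE[\liminf_k G(x^k;\om)]$, and the right-hand side is strictly positive because the a.s.\ liminf is positive on a set of full measure. Since the sequence $\{x^k\}$ was an arbitrary diverging sequence in $K$, this establishes the coercivity of $F(x)$ on $K$.

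The main obstacle — and the reason both hypotheses are needed — is the application of Fatou's lemma: it requires a common integrable lower bound on the integrands along the \emph{entire} diverging sequence, which is exactly what condition~(ii) supplies, while condition~(i) guarantees the pointwise liminf is positive almost surely. A subtle technical point I would need to handle carefully is the passage from $\liminf$ over the continuous limit $\|x\|\to\infty$ (as stated in the hypothesis) to $\liminf$ over arbitrary diverging sequences; these agree, but I should note that the a.s.\ set on which~(i) holds may depend on the chosen sequence only through a null set, and that a single null exceptional set can be arranged so that the pointwise liminf bound holds simultaneously. With the coercivity of the continuous expected-value map $F$ in hand over the closed convex set $K$, the existence of a solution to SVI$(K,F)$ follows from the standard coercivity-based existence result, completing the argument.
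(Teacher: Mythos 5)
Your proposal is correct and takes essentially the same route as the paper's own proof: both transfer the almost-sure coercivity of the scenario map to the expected-value map via Fatou's lemma (with hypothesis (ii) supplying the integrable lower bound that licenses its application), and then invoke the standard deterministic coercivity-based existence result of \cite[Ch.~2]{facchinei02finite}. The only difference is that you make explicit the bookkeeping the paper leaves implicit --- shifting by $u(\omega)$ to apply the nonnegative form of Fatou's lemma and passing from the continuous limit $\|x\|\to\infty$ to arbitrary diverging sequences --- which is a matter of rigor, not of method.
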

\begin{proof}
See appendix.
\end{proof}

In settings where $K$ is a Cartesian product, defined as
\begin{align}
\label{cart}
K \triangleq \prod_{\nu =1}^N K_{\nu},
\end{align} 
  VI$(K,F)$ is a partitioned
variational inequality probem, as defined in
~\cite[Ch.~3.5]{facchinei02finite}.  Accordingly,
Proposition~\ref{prop-smooth-exist} can be weakened so that even if the
coercivity property holds for just one index $\nu \in \{1, \hdots, N\}$,
the stochastic variational inequality is solvable. 
\begin{proposition}[Solvability of SVI($K,F$) for Cartesian
$K$]\label{prop-smooth-exist-cart} \em
Consider a stochastic variational inequality  SVI$(K,F)$ where $K$ is a
Cartesian product of closed and convex sets as specified in
\eqref{cart}. Suppose that Assumption  \ref{assumption1} and  the
following hold:
\begin{itemize}
\item[(i)]  There exists an
	$\xref \in K$ and an index $\nu \in
	\{1,\hdots,N\}$ such that { for any $x \in K$},
$$\liminf_{\|x_\nu\|\to \infty, x_\nu\in K_\nu} \left[ {G}_\nu(x;\om)^T(x_\nu -
		\xref_\nu) \right]> 0 $$ holds in an almost sure sense; and
\item[(ii)] For the above $\nu$ and for any $x$, 
suppose there exists a nonnegative integrable function {$u( \omega)$}
such that ${G_{\nu}(x;\om)^T(x_{\nu}-\xref_{\nu})}  \geq -u(\om)$ holds almost surely for any $x$.
\end{itemize}
Then SVI$(K,F)$ admits a solution.
\end{proposition}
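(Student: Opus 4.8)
The plan is to mirror the argument sketched for Proposition~\ref{prop-smooth-exist}, but to replace the global coercivity condition by its block (partitioned) analogue. When $K$ is Cartesian as in \eqref{cart}, the problem SVI$(K,F)$ is a partitioned variational inequality in the expected-value map $F(x)=\bkE[F(x;\om)]$, and the deterministic theory (cf.~\cite[Ch.~3.5]{facchinei02finite}) guarantees solvability as soon as there exists an $\xref \in K$ and an index $\nu \in \{1,\dots,N\}$ for which the block-coercivity condition
\begin{align*}
\liminf_{\|x_\nu\| \to \infty, \, x_\nu \in K_\nu} F_\nu(x)^T(x_\nu - \xref_\nu) > 0
\end{align*}
holds for every configuration of the remaining components $x_{-\nu}$. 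Hence it suffices to derive this block-coercivity of $F$ from the scenariowise hypotheses (i) and (ii), after which the deterministic partitioned-VI existence result finishes the proof.

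First I would fix the index $\nu$ and the reference point $\xref$ supplied by (i), and rewrite, for each $x \in K$,
\begin{align*}
F_\nu(x)^T(x_\nu - \xref_\nu) = \bkE\!\left[ F_\nu(x;\om)^T(x_\nu - \xref_\nu) \right] = \bkE[G(x;\om)].
\end{align*}
Next, along any sequence $\{x^k\} \subseteq K$ with $\|x^k_\nu\| \to \infty$, hypothesis (ii) supplies the integrable lower bound $G(x^k;\om) \geq -u(\om)$ almost surely, so Fatou's lemma is applicable and yields
\begin{align*}
\liminf_{k \to \infty} \bkE[G(x^k;\om)] \geq \bkE\!\left[ \liminf_{k \to \infty} G(x^k;\om) \right] \geq \bkE\!\left[ \liminf_{\|x_\nu\| \to \infty, \, x_\nu \in K_\nu} G(x;\om) \right].
\end{align*}
By (i) the inner liminf is strictly positive almost surely and is bounded below by the integrable function $-u$, so its expectation is strictly positive. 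Passing to the infimum over all admissible sequences then establishes the block-coercivity displayed above for $F$, and the deterministic result delivers a solution of the expected-value VI$(K,F)$, i.e.\ of SVI$(K,F)$.

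The main obstacle is the same measure-theoretic subtlety present in Proposition~\ref{prop-smooth-exist}: the coercivity limit is taken over the continuous parameter $x_\nu$ rather than a countable index, so Fatou's lemma cannot be invoked directly on the liminf and must instead be applied along arbitrary sequences realizing it, after which one takes the infimum over such sequences. This is precisely where the uniformity of the lower bound in (ii)---a single integrable $u(\om)$ valid for all $x$---is essential, since it legitimizes Fatou simultaneously along every sequence; one must also record the measurability of $\om \mapsto \liminf_{\|x_\nu\|\to\infty} G(x;\om)$. A secondary point worth stating with care is that the block condition is genuinely weaker than the full-space condition of Proposition~\ref{prop-smooth-exist}, so the only ingredient beyond the Fatou argument is the deterministic solvability theorem for partitioned variational inequalities, whose hypotheses must be matched exactly to the ``for any $x$'' quantifier appearing in (i).
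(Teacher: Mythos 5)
Your argument is correct and is essentially the paper's own proof: both use the uniform integrable lower bound in hypothesis (ii) to justify Fatou's lemma, thereby transferring the almost-sure block coercivity of (i) to the expected-value map $\bkE[F_\nu(x;\om)^T(x_\nu-\xref_\nu)]$, and then invoke the deterministic partitioned-VI existence theory of \cite[Ch.~3.5]{facchinei02finite}. The only difference is one of packaging: the paper makes explicit the intermediate object you treat as a black box, namely boundedness of the level set $C_{\leq} = \{x \in K : \max_{1\leq \nu \leq N} \bkE[F_\nu(x;\om)^T(x_\nu - \xref_\nu)] \leq 0\}$, to which \cite[Prop.~3.5.1]{facchinei02finite} is then applied, while your sequence-by-sequence handling of the Fatou step is, if anything, slightly more careful than the paper's.
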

\begin{proof}
See appendix.
\end{proof}

{It is well known that strong monotonicity of the map $F$ and convexity
	of the set $K$ guarantee existence of solution for the
{deterministic}		VI$(K,F)$~\cite[Theorem~2.3.3]{facchinei02finite}. {In fact, it
		may be recalled that if
		$F(x)$ is a strongly monotone map then for {\bf any} reference
		vector $x^{\rm ref} \in K$, we have that 
\begin{align*} & \quad \liminf_{x \in K, \|x \| \to \infty} { F(x)^T(x-x^{\rm ref})} = 
\liminf_{x \in K, \|x \| \to \infty} { (F(x)-F(x^{\rm ref}) +
	F(x^{\rm ref}))^T(x-x^{\rm ref})} \\
& \geq \liminf_{x \in K, \|x \| \to
		\infty} \left[\eta \|x-x^{\rm ref}\|^2 +{F(x^{\rm
				ref})^T(x-x^{\rm ref})}\right] = + \infty, 
\end{align*}
	where the first term tends to $+\infty$ at a quadratic rate while
		the second term may tend to $\pm \infty$ at a linear rate. In
		effect, we have that when $F(x)$ is a strongly monotone map, the
		coercivity requirement holds immediately and existence follows.
It follows that if ${G}(x;\omega)$ is a strongly monotone map for almost
every $\omega$ with constant $\eta(\omega)$ where $\eta(\omega) \geq
\eta$ a.s., then $F(x)$ is a strongly monotone map and {SVI$(K,F)$} is
solvable with a  unique solution.  We now present a similar result for stochastic variational inequalities SVI$(K,F)$ under
the assumption that the mapping $G(x;\om)$ is a monotone  mapping over
$K$ for almost every $\omega \in \Omega$, a weaker set of sufficient
conditions as compared to Propositon \ref{prop-smooth-exist} guarantees
existence of solution for SVI($K,F$).}
\begin{corollary}[Solvability of SVI($K,F$) under monotonicity]
\label{prop-monotone-exist}\em
Consider SVI$(K,F)$ and suppose that Assumption \ref{assumption1} holds. Further, let $G(x;\om)$ be a monotone mapping on $K$ for almost every
$\omega \in \Omega.$ Suppose there exists an $\xref \in K$ such that
the following hold:
\begin{itemize}
\item[(i)] $ \displaystyle \liminf_{\|x\| \to \infty, x\in K} \left[ G(\xref;\om)^T(x-\xref) \right]> 0
$ holds in almost sure sense; 
\item[(ii)] Suppose there exists a nonnegative integrable function
{$u(\omega)$} such that ${G(\xref;\om)^T(x-\xref)}  \geq -u(\om)$ holds almost surely for any $x$.
\end{itemize}
Then SVI$(K,F)$  is solvable.
\end{corollary}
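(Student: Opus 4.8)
The plan is to reduce the corollary to Proposition~\ref{prop-smooth-exist} by exploiting monotonicity to replace the coercivity test point $x$ in the argument of $F$ with the fixed reference point $\xref$. The key observation is that the monotonicity of $F(\cdot;\om)$ on $K$ gives, for every $x \in K$ and almost every $\om \in \Omega$,
\begin{align*}
\left( F(x;\om) - F(\xref;\om) \right)^T (x - \xref) \ \geq \ 0,
\end{align*}
which rearranges to $F(x;\om)^T(x-\xref) \geq F(\xref;\om)^T(x-\xref)$. Thus the hypothesis (i) stated in terms of $F(\xref;\om)$ is actually \emph{stronger} than what is needed: it implies the coercivity condition (i) of Proposition~\ref{prop-smooth-exist} written in terms of $F(x;\om)$, since
\begin{align*}
\liminf_{\|x\|\to\infty,\, x \in K} \left[ F(x;\om)^T(x-\xref) \right] \ \geq \ \liminf_{\|x\|\to\infty,\, x \in K} \left[ F(\xref;\om)^T(x-\xref) \right] \ > \ 0 \quad \text{a.s.}
\end{align*}

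First I would establish this monotonicity-based lower bound and use it to transfer hypothesis (i) into the form required by Proposition~\ref{prop-smooth-exist}. Next I would verify the integrable-minorant condition (ii): here the corollary already supplies a nonnegative integrable $u(\om)$ with $G(x;\om) = F(\xref;\om)^T(x-\xref) \geq -u(\om)$ almost surely; combined with the inequality above, the same $u(\om)$ minorizes $F(x;\om)^T(x-\xref)$ as well, so condition (ii) of Proposition~\ref{prop-smooth-exist} holds verbatim. With both hypotheses of Proposition~\ref{prop-smooth-exist} in force, solvability of SVI$(K,F)$ follows immediately.

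The only delicate point is the direction of the substitution: one must be careful that the minorant for the \emph{affine-in-$x$} map $F(\xref;\om)^T(x-\xref)$ indeed controls the genuinely nonlinear $F(x;\om)^T(x-\xref)$, and this is exactly where monotonicity does the work. I expect the main (and essentially only) obstacle to be verifying that Assumption~\ref{assumption1} together with almost-sure monotonicity legitimately feeds into the Fatou-type argument underlying Proposition~\ref{prop-smooth-exist} — that is, ensuring measurability and integrability of $\om \mapsto F(\xref;\om)^T(x-\xref)$ so that the liminf can be pushed inside the expectation. Since $F(\xref;\cdot)$ is integrable by Assumption~\ref{assumption1}(i) and $(x-\xref)$ is a fixed vector for each $x$, this integrability is automatic, and the corollary is then a direct specialization rather than an independent argument.
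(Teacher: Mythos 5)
Your proposal is correct and follows essentially the same route as the paper: both proofs hinge on the monotonicity rearrangement $F(x;\om)^T(x-\xref) \geq F(\xref;\om)^T(x-\xref)$ and conclude via Fatou's lemma and Proposition~\ref{prop-smooth-exist}. The only difference is cosmetic --- you transfer both hypotheses to the scenario map pointwise in $\om$ and invoke the proposition as a black box, whereas the paper takes expectations first and applies Fatou directly to the affine map $F(\xref;\om)^T(x-\xref)$; the mathematical content is identical.
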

\begin{proof}
See appendix.
\end{proof}
\uss{Next,} we consider a stochastic variational inequality 
SVI$(K,\Phi)$ where $\Phi(x) \triangleq \bkE [\Phi(x;\om)]$  {and
$\Phi(x;\om)$ is a multi-valued mapping. Before proceeding to
		prove existence {of solutions to SVIs with multi-valued maps},
we restate Corollary
\ref{th:rep_ele_integral} for the case of the set-valued integral
$\Phi(x)=\bkE[\Phi(x;\om)]$ of interest. 
\begin{lemma}[{Representation of elements of set-valued integral as an integral of convex combination of extremal selections}]
\em Suppose Assumption \ref{nonatomic} holds. Let $\Phi$ be a measurable  integrably bounded set-valued map from $\Real^n
\times \Omega$ to subsets of $\Real^n$ with closed nonempty images. Then
any  $w \in \bkE[ \Phi(x;\om)]$ can be expressed as 
\begin{align*}
w= \displaystyle \int_{\Omega}g(x;\om) d\bkP
\end{align*}
where $g(x;\om) = \sum_{k=1}^n \lambda_k(x)f_k(x;\om) $ and  $\lambda_k(x) \geq 0 , \sum_{k=0}^n \lambda_k(x) = 1$ and each $f_k(x;\om)$ is an extremal selection of $\Phi(x;\om)$.
\label{lemma:selections}
\end{lemma}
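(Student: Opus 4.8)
The plan is to derive the desired representation directly from Corollary~\ref{th:rep_ele_integral}, which already applies to arbitrary measurable, integrably bounded, nonempty-closed-valued maps from $\Omega$ into subsets of $\Real^n$. The key observation is that for a fixed $x$, the scenario map $\om \mapsto \Phi(x;\om)$ is exactly such a map: Assumption~\ref{lsc} furnishes nonempty closed images and integrable boundedness, while measurability in $\om$ follows from the standing hypotheses on $\Phi$. Thus I would begin by fixing $x \in \Real^n$ and invoking the corollary with $H(\cdot) \triangleq \Phi(x;\cdot)$, together with Assumption~\ref{nonatomic} guaranteeing nonatomicity of $\bkP$.

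Applying Corollary~\ref{th:rep_ele_integral} to any $w \in \bkE[\Phi(x;\om)] = \int_\Omega \Phi(x;\cdot)\,d\bkP$ yields $n+1$ extremal selections $h_0,\dots,h_n \in \Hscr_e$ (here the extremal-selection set is the one associated with $\Phi(x;\cdot)$) and $n+1$ disjoint measurable sets $A_0,\dots,A_n \in \Fscr$ whose characteristic functions partition $\Omega$, such that
\begin{align*}
w \;=\; \int_\Omega \left( \sum_{k=0}^n \chi_{A_k}(\om)\, h_k(\om) \right) d\bkP.
\end{align*}
Writing $f_k(x;\om) \triangleq h_k(\om)$ to make the dependence on $x$ explicit, each $f_k(x;\cdot)$ is an extremal selection of $\Phi(x;\cdot)$, as required by the statement.

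The remaining step is cosmetic: I would recast the indicator-weighted sum as a pointwise convex combination with $x$-dependent but $\om$-independent coefficients. The natural route is to set $\lambda_k(x) \triangleq \bkP(A_k)$, so that $\lambda_k(x) \geq 0$ and $\sum_{k=0}^n \lambda_k(x) = 1$ since the $A_k$ partition $\Omega$. To match the target expression $g(x;\om) = \sum_{k} \lambda_k(x) f_k(x;\om)$ under the integral, one uses that, by the extremal-selection machinery of Theorem~\ref{th:rep_ext_pt}, the integral $\int_\Omega \chi_{A_k} h_k\, d\bkP$ can be replaced by $\lambda_k(x)\int_\Omega h_k\, d\bkP$; nonatomicity is what lets us freely redistribute the selections over measurable sets of prescribed measure while preserving membership in $\Hscr_e$. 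I expect the main obstacle to be precisely this identification — reconciling the indicator representation of Corollary~\ref{th:rep_ele_integral} with the convex-combination form stated in the lemma — since it requires care that the coefficients $\lambda_k(x)$ are genuinely independent of $\om$ and that each $f_k(x;\cdot)$ remains a bona fide extremal selection after the regrouping. All measurability and boundedness checks are routine given Assumption~\ref{lsc}, so the argument is essentially an application and reformulation of the already-cited Aubin–Frankowska representation theorem.
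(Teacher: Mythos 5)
Your reduction to Corollary~\ref{th:rep_ele_integral} is fine as far as it goes, but the step you call ``cosmetic'' is in fact where the proof breaks. The corollary gives $w = \sum_{k}\int_{A_k} h_k\, d\bkP$, and you propose to rewrite each term as $\lambda_k(x)\int_\Omega h_k\, d\bkP$ with $\lambda_k(x) \triangleq \bkP(A_k)$. That identity is false in general: $\int_{A_k} h_k\, d\bkP$ is a vector in $\Real^n$ that need not even be parallel to $\int_\Omega h_k\, d\bkP$, let alone equal to its $\bkP(A_k)$-multiple. What nonatomicity (via Lyapunov's theorem) actually provides is a statement in the opposite direction: for a \emph{prescribed} fraction $\lambda \in [0,1]$ one can \emph{find some} measurable set $B$ with $\int_B h_k\, d\bkP = \lambda \int_\Omega h_k\, d\bkP$; it does not say that the particular sets $A_k$ produced by the corollary satisfy this with $\lambda = \bkP(A_k)$. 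Nor does ``redistributing the selections over measurable sets of prescribed measure'' preserve the point being represented: once you alter the sets or the selections, the integral represents a different element of $\bkE[\Phi(x;\om)]$. So the passage from the indicator form to a convex combination with $\om$-independent coefficients is a genuine gap, not a reformulation.

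The paper's proof avoids this entirely by working at the level of the integral set rather than at the level of selections. Under Assumption~\ref{nonatomic} and integrable boundedness, Theorem~\ref{th:rep_ext_pt} gives that $\bkE[\Phi(x;\om)]$ is convex and compact; hence, by the Minkowski extreme-point representation combined with Carath\'eodory's theorem, any $w$ in it is a convex combination $w = \sum_{k=0}^n \lambda_k(x) w_k$ of at most $n+1$ extreme points $w_k$ of the closure, with scalar weights that are automatically independent of $\om$ because the combination is taken among vectors in $\Real^n$, not among selections. Part (b) of Theorem~\ref{th:rep_ext_pt} then represents each $w_k$ as $\int_\Omega f_k(x;\om)\, d\bkP$ for an extremal selection $f_k$ of $\Phi(x;\cdot)$, and linearity of the integral assembles $g(x;\om) = \sum_k \lambda_k(x) f_k(x;\om)$. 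If you want to salvage your route, you must replace the false redistribution identity with exactly this argument; Corollary~\ref{th:rep_ele_integral} is the wrong tool here, because its partition representation carries information that cannot in general be collapsed into measure-weighted convex combinations of the whole-space integrals.
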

\begin{proof}
Since  $w \in \bkE[ \Phi(x;\om)]$  and $\bkE[ \Phi(x;\om)]$ is a convex set,
	   thus by Carath{$\acute{\textrm{e}}$}odory's theorem  for convex
	   sets, there exists $\lambda_k(x) \geq 0, w_k \in
{\textrm{ext}}\left({\textrm{cl}}\left(\bkE[ \Phi(x;\om)] \right) \right) $
such that $$ w = \sum_{k=0}^n \lambda_k(x) w_k, \quad \sum_{k=0}^n
\lambda_k(x) = 1$$ Now, since $w_k \in
{\textrm{ext}}\left({\textrm{cl}}\left(\bkE[ \Phi(x;\om)] \right) \right)$,
	by {\cite[Th.~8.6.3]{aubin90setvalued}}, for each index $k$,  there exists
	an extremal selection $f_k(x;\om)$ from $\Phi(x;\om)$ such that
	$\int_\Omega f_k(x;\om) d\bkP= w_k$. Thus, we obtain 
$$w = \sum_{k=0}^n \lambda_k(x) \int_\Omega f_k(x;\om) d\bkP,$$
which can be rewritten as $$w =  \displaystyle \int_{\Omega}g(x;\om)
d\bkP$$ where $g(x;\om)= \sum_{k=0}^n \lambda_k(x) f_k(x;\om)$. The
required representation result follows.
\end{proof}

We begin by providing a coercivity-based sufficiency condition {for
	deterministic multi-valued variational inequalities~\cite{Kien2008MultivaluedVI}.}
\begin{theorem}[Solvability of VI $(K,\Phi)${\cite[Th.~2.1,2.2]{Kien2008MultivaluedVI}}]\em
Suppose $K$ is a closed and convex set in $\Real^n$ and let $\Phi:K
	\rightrightarrows \Real^n$ be a lower semicontinuous multifunction with
	nonempty closed and convex images. Consider the following
	statements:
	\begin{enumerate}
	\item [(a)] Suppose there exists an $\xref
	\in K$ such that $L_<(K,\Phi)$ is bounded (possibly empty) where 
\begin{align}\label{eqn:setL}
 L_< (K,\Phi) \triangleq  \left\{ x \in K: \inf_{y \in \Phi(x)}
(x-\xref)^Ty	< 0 \right\}. 
\end{align}
	\item [(b)] The variational inequality VI$(K,\Phi)$ is solvable
	\end{enumerate}
	Then, (a) implies (b). Furthermore, if $\Phi(x)$ is a
	pseudomonotone mapping over $K$, then (a) is equivalent to (b).  
\label{Th:L<bdd}
\end{theorem}

	Using this condition, we proceed to develop sufficiency conditions
	for the existence of solutions to SVI$(K,\Phi).$ 

\begin{proposition}[Solvability of
SVI$(K,\Phi)$]\label{prop:solv-multi-SVI-convex}\em
Consider SVI$(K,\Phi)$ and suppose assumptions \ref{nonatomic} and
\ref{lsc} hold.  Further, suppose the following hold:
\begin{enumerate}
\item[(i)] Suppose there exists an $\xref \in K$ such that
$$ \liminf_{x \in K, \|x\| \to \infty} \left( {\inf_{w \in
			 \Phi(x;\om)} w^T(x-\xref)}\right) >
	0 \mbox{ almost surely}.$$
\item[(ii)] For the above $\xref$,  suppose  there exists a nonnegative integrable function $u(\om)$ such that  $ g(x;\om)^T(x-\xref) \geq -u(\om)$ holds  almost surely for any integrable selection $g(x;\om)$ of $\Phi(x;\om)$ and for any $x$.
\end{enumerate}
		Then SVI$(K,\Phi)$ is solvable. 
\end{proposition}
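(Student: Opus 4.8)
The plan is to reduce the stochastic statement to the deterministic solvability criterion of Theorem~\ref{Th:L<bdd}, applied to the expected-value map $\Phi(x) \triangleq \bkE[\Phi(x;\om)]$, and then to verify boundedness of $L_<(K,\Phi)$ by pushing the $\liminf$ of hypothesis~(i) inside the expectation via Fatou's lemma, exactly mirroring the proof of Proposition~\ref{prop-smooth-exist}. First I would check that $\Phi(\cdot) = \bkE[\Phi(\cdot;\om)]$ actually satisfies the hypotheses of Theorem~\ref{Th:L<bdd}. Convexity of each image $\bkE[\Phi(x;\om)]$ is immediate from Assumption~\ref{nonatomic} together with Theorem~\ref{th:rep_ext_pt}(a); integrable boundedness (Assumption~\ref{lsc}(ii)) and Theorem~\ref{th:rep_ext_pt}(c) give compactness, hence closedness, of each image. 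Lower semicontinuity of the integral map $x \mapsto \bkE[\Phi(x;\om)]$ must be inherited from the a.s.\ lower semicontinuity of $\Phi(\cdot;\om)$; I would establish this by a measurable-selection argument, taking $x_j \to x$ and a target $w \in \bkE[\Phi(x;\om)]$ with integrable selection $v(\om) \in \Phi(x;\om)$, using scenario lsc to construct $v_j(\om) \in \Phi(x_j;\om)$ with $v_j \to v$ pointwise, and passing the (integrably bounded, hence dominated) integrals to the limit.

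The central analytic step is an interchange of an infimum with the expectation. Writing $\phi(x;\om) \triangleq \inf_{v \in \Phi(x;\om)} v^T(x-\xref)$ and $\psi(x) \triangleq \inf_{w \in \Phi(x)} w^T(x-\xref)$, the key identity is
$$ \psi(x) = \bkE\left[\phi(x;\om)\right], $$
which is precisely the fact that the support function of the set-valued (Aumann) integral equals the integral of the support functions, applied to the direction $-(x-\xref)$. I would justify this using the representation of Lemma~\ref{lemma:selections} and Corollary~\ref{th:rep_ele_integral}: every $w \in \Phi(x)$ is the integral of an integrable selection, so selecting pointwise-minimizing measurable selections yields $\psi(x) \geq \bkE[\phi(x;\om)]$, while the reverse inequality is immediate since the integral of any selection lies in $\Phi(x)$.

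With the identity in hand, the Fatou argument runs as in Proposition~\ref{prop-smooth-exist}. By hypothesis~(ii), every integrable selection satisfies $g(x;\om)^T(x-\xref) \geq -U(\om)$, whence $\phi(x;\om) \geq -U(\om)$ a.s.\ with $U$ integrable. Choosing a sequence $x_k \in K$, $\|x_k\| \to \infty$, that attains $\liminf_{\|x\|\to\infty} \psi(x)$ and applying Fatou's lemma to the nonnegative integrands $\phi(x_k;\om) + U(\om)$,
$$ \liminf_{\|x\| \to \infty,\, x \in K} \psi(x) = \liminf_k \bkE[\phi(x_k;\om)] \geq \bkE\left[\liminf_{\|x\| \to \infty,\, x \in K} \phi(x;\om)\right] > 0, $$
where the final strict inequality is hypothesis~(i). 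Consequently $\psi(x) > 0$ for all sufficiently large $\|x\|$, so the level set $L_<(K,\Phi)$ is bounded, and Theorem~\ref{Th:L<bdd} (a)$\Rightarrow$(b) delivers solvability of SVI$(K,\Phi)$.

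I expect the main obstacle to be the two set-valued-analysis facts rather than the probabilistic estimate: establishing lower semicontinuity (together with closedness and convexity) of the expected-value map so that Theorem~\ref{Th:L<bdd} is applicable at all, and rigorously justifying the support-function interchange $\psi(x) = \bkE[\phi(x;\om)]$ through measurable selections. Once these structural facts are secured, the coercivity-to-boundedness passage via Fatou is routine and parallels the single-valued case.
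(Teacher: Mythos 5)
Your proposal is correct and it shares the paper's outer skeleton (coercivity of the expected-value map, hence boundedness of $L_<(K,\Phi)$, hence solvability via Theorem~\ref{Th:L<bdd}), but the central step is carried out by a genuinely different device. The paper argues by contradiction: if coercivity of $\Phi(x) = \bkE[\Phi(x;\om)]$ failed along a sequence $\{x_k\}$, it writes the minimizing points $y_k \in \Phi(x_k)$, via Lemma~\ref{lemma:selections}, as integrals of convex combinations $\sum_l \lambda_l(x_k) f_l(x_k;\om)$ of extremal selections, applies Fatou's lemma to those particular integrands, and then extracts a single index $l$ for which $\liminf_k f_l(x_k;\om)^T(x_k - \xref) \leq 0$ on a set of positive measure, contradicting hypothesis (i). You instead establish the interchange identity $\inf_{w \in \Phi(x)} w^T(x-\xref) = \bkE\left[\inf_{v \in \Phi(x;\om)} v^T(x-\xref)\right]$ and apply Fatou directly to the scenario-wise infima --- no contradiction, no extremal selections, no convex-combination weights. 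Your route is cleaner precisely at the paper's most delicate point: the inference from $\liminf_k \lambda_l(x_k) f_l(x_k;\om)^T(x_k-\xref) \leq 0$ and $0 \leq \lambda_l(x_k) \leq 1$ to $\liminf_k f_l(x_k;\om)^T(x_k-\xref) \leq 0$ requires extra care when $\lambda_l(x_k) \to 0$, and your identity bypasses that issue entirely; you also explicitly verify the standing hypotheses of Theorem~\ref{Th:L<bdd} (nonempty closed convex images and lower semicontinuity of the expected map), which the paper invokes without checking. What the paper's argument buys in exchange is economy: it uses only the one-sided representation already packaged in Lemma~\ref{lemma:selections} (every point of the integral is the integral of a selection), whereas your identity additionally needs measurable near-minimizing selections for the direction $\psi(x) \leq \bkE[\phi(x;\om)]$ --- incidentally, your write-up attaches the two selection arguments to the wrong inequalities, though both ingredients are present --- and your lower-semicontinuity argument leans on a dominated-convergence step for which Assumption~\ref{lsc} (an integrable bound for each fixed $x$ separately) does not literally supply a single dominating function along $x_j \to x$. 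That last point is a real but patchable technical caveat, and it is one the paper's own appeal to Theorem~\ref{Th:L<bdd} silently shares.
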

\begin{proof}
The proof proceeds in two parts.
\begin{itemize}
\item[(a)] We first show that the following coercivity condition holds
for the expected value map: there exists an $\xref \in K$ such that
\begin{align}\label{eqn:coercivity-multi}
 \liminf_{x \in K, \|x\| \to \infty} \left( {\inf_{w \in
			 \Phi(x)} w^T(x-\xref)}\right) >	0.
\end{align}
\item[(b)] If (a) holds, then we show that for the given $\xref
	\in K$, the set $L_<(K,\Phi)$ is bounded (possibly empty) where 
	$L_<(K,\Phi)$ is defined in \eqref{eqn:setL}.
\end{itemize}
\paragraph{Proof of (a):}
We proceed by a contradiction and assume that
\eqref{eqn:coercivity-multi} does not hold for the expected value map.
Thus, for any $\xref$, there exists a  sequence $x_k \in K$ with $\|x_k
\| \to \infty$ such that 
$$ \liminf_{k \to \infty} \left( {\inf_{w \in
			 \Phi(x_k)} w^T(x_k-\xref)}\right) \leq 0. $$
Since $\Phi(x_k)$ is a closed set, the infimum above is attained at $y_k \in \Phi(x_k)$. Thus, we have
\begin{align}\label{eqn:yk}
 \liminf_{k \to \infty}  \left[ y_k^T(x_k-\xref) \right] \leq 0
\end{align}
Now, $y_k \in \Phi(x_k) = \bkE[\Phi(x_k;\om)]$. By the representation
Lemma (Lemma~ \ref{lemma:selections}),
since $y_k \in \Phi(x_k)$, we have that 
\begin{align*}
y_k= \displaystyle \int_{\Omega}g_k(x_k;\om) d\bkP
\end{align*}
for some $g_k(x_k;\om) = \sum_{l=1}^n \lambda_l(x_k)f_l(x_k;\om) $ where  $\lambda_l(x_k) \geq 0 , \sum_{l=0}^n \lambda_l(x) = 1$ and each $f_l(x_k;\om)$ is an extremal selection of $\Phi(x_k;\om)$.
With this substitution, (\ref{eqn:yk}) becomes
\begin{align*}
 \liminf_{k \to \infty}  \left[ \displaystyle \int_{\Omega}
 g_k(x_k;\om)^T(x_k-\xref)  d\bkP \right]\leq 0.
\end{align*}
By hypothesis (ii), we may use Fatou's Lemma to interchange the order of
integration and limit infimum, as shown next: 
\begin{align*}
\displaystyle \int_{\Omega} \liminf_{k \to \infty}
\left[~g_k(x_k;\om)^T(x_k-\xref)  \right]d\bkP \leq 0.
\end{align*}
Consequently, there is a set of positive measure $U \subseteq \Omega$,
	over which the integrand is nonpositive or
\begin{align*}
 \liminf_{k \to \infty} ~\left[  g_k(x_k;\om)^T(x_k-\xref) \right]  \leq 0, \quad \forall \om \in U.
\end{align*}
Substituting the expression for $g_k$, we obtain the following
inequality.
\begin{align*}
 \liminf_{k \to \infty}   ~\left[ (x_k-\xref)^T\left(\sum_{l=1}^n \lambda_l(x_k)f_l(x_k;\om)\right)  \right]  \leq 0, \quad \forall \om \in U.
\end{align*}
As a result, for at least one index  $l \in \{1,...,n\}$, we have that
\begin{align*}
 \liminf_{k \to \infty}   ~ \left[ \lambda_l(x_k)f_l(x_k;\om)^T(x_k-\xref)  \right] \leq 0, \quad \forall \om \in U.
\end{align*}
Since $0 \leq \lambda_l(x_k) \leq 1$, the following must be true for the above $l$:
\begin{align*}
 \liminf_{k \to \infty}   \left[ f_l(x_k;\om)^T(x_k-\xref)\right]   \leq 0, \quad \forall \om \in U.
\end{align*}
Moreover, $f_l(x_k;\om) \in \Phi(x_k;\om)$ since it is an extremal
selection and  we have that 
\begin{align*}
 \liminf_{k \to \infty}   \left[ \inf_{w \in \Phi(x_k;\om)}  w^T(x_k-\xref)   \right] \leq 0, \quad \forall \om \in U.
\end{align*}
Since, this holds for any $\xref$, it holds for  the vector $\xref$ in the
		hypothesis and for a set of positive measure $U$, we have that 
$$ \liminf_{x \in K, \|x\| \to \infty} \left[ {\inf_{w \in
			 \Phi(x;\om)} w^T(x-\xref)}\right] \leq 	0, \forall
			 \omega \in  U.$$
This contradicts the hypothesis and condition
\eqref{eqn:coercivity-multi} must hold for the expected value map.

\paragraph{Proof of (b)}
Next, we show that when the condition \eqref{eqn:coercivity-multi} holds
for the expected value map, for the given $\xref \in K$, the set
$L_<(K,\Phi)$ is bounded (possibly empty) where $L_<(K,\Phi)$ is defined
in \eqref{eqn:setL}. 
\uvs{We proceed by contradiction and assume that} $L_<(K,\Phi)$ is nonempty and unbounded.
Then, there exists a sequence $\{x_k\} \in L_<(K,\Phi)$ with $\|x_k\| \to \infty$. Since $x_k \in L_<(K,\Phi)$, we have for each $k$, $$\inf_{y \in \Phi(x_k)} \left[(x_k-\xref)^Ty	\right]< 0.$$
This implies that for the sequence $\{x_k\}$, we have that
$$ \liminf_{x_k \in K, \|x_k\| \to \infty} \left[ {\inf_{w \in
			 \Phi(x_k)} w^T(x_k-\xref)}\right]  \leq 	0.$$
But this contradicts the coercivity property of the expected value map
proved earlier: 
$$ \liminf_{x\in K, \|x\| \to \infty} \left[ {\inf_{w \in
			 \Phi(x)} w^T(x-\xref)}\right] > 	0.$$
This contradiction implies that $L_<(K,\Phi)$ is bounded and by Theorem~\ref{Th:L<bdd}, SVI$(K,\Phi)$ is solvable.
\end{proof}



\subsection{SQVIs with single-valued mappings} \label{subsect:sqvi-single}
Our first result is an extension of \cite[Cor.~2.8.4]{facchinei02finite}
to the stochastic regime. In particular, we assume that the mapping
$\bkE\left[G(x;\om)\right]$ cannot be directly obtained; instead, we
provide an existence statement that relies on the scenario-based map
$G(x;\om)$.  

\begin{proposition}[Solvability of SQVI($K,F$)]\label{exist-qvi}\em
 Suppose Assumptions  \ref{assumption1} and
 \ref{assumption-det-stratset} hold. Furthermore, suppose there exists 
a bounded open set $U \subset \Real^n$ and a vector $\xref \in U$  such
that the following hold:
\begin{enumerate}
\item[(a)] For every $\xbar \in \textrm{cl}(U)$, the image $K(\xbar)$ is nonempty and  $\displaystyle \lim_{x \to \xbar} K(x)=K(\xbar);$
\item[(b)] $\xref \in K(x)$ for every $ x \in \textrm{cl}(U);$
\item[(c)] $L_<(K,G;{\om}) \cap \textrm{bd}(U)  = \emptyset $ holds almost surely, where
\begin{align}\label{eqn:setLG} L_<(K,G;{\om}) \triangleq  \displaystyle
\left\{ x \in K(x)~|~(x-\xref)^T G(x;\om) < 0  \displaystyle
\right\}.\end{align}
\end{enumerate}
Then, SQVI($K,F$) has a solution {where $F(x)=\bkE\left[ G(x;\om) \right].$}
\end{proposition}
\begin{proof}
{Recall that by~\cite[Cor.~2.8.4]{facchinei02finite}, the stochastic SQVI($K,F$) is
solvable if }
$ L_{<}(K,F) \cap \textrm{bd}(U)  = \emptyset,$ where 
$$ L_<(K,F) \triangleq \displaystyle \left\{ x \in K(x)~|~( x-\xref)^T \bkE\left[ G(x;\om) \right]< 0 \displaystyle \right\}.$$
\uvs{We proceed by contradiction and assume that there exists an $x \in
L_<(K,F)$
and $x \in \textrm{bd}(U)$. By assumption, since $L_<(K,G;\omega) \cap
\mbox{bd}(U) = \emptyset$ and  $x \in \mbox{bd}(U)$,
	it follows that $x \notin L_<(K,G;{\om})$ almost surely. This
	implies that   $(x-\xref)^T G(x;\om) \geq 0$ for all $\om \in
\Omega$. {It follows that} $(x-\xref)^T \bkE [G(x;\om)] \geq 0,$
implying that $ x \notin L_<(K,F)$. This contradicts our assertion that $x \in
L_<(K,F)$.  Therefore, we must have that $ L_{<}(K,F) \cap \textrm{bd}(U)  = \emptyset$ and  by \cite[Cor.~2.8.4]{facchinei02finite}, the stochastic
SQVI($K,F$) has a solution.}
\end{proof}
Another avenue for ascertaining existence of equilibrium in stochastic regimes is an extension of Harker's result \cite[Th.~2]{harker91generalized} which we present next.

\begin{theorem}[Solvability of SQVI($K,F$) under
compactness]\label{exist-cpt} \em
Suppose Assumptions  \ref{assumption1}  and
\ref{assumption-det-stratset}  hold and there exists a nonempty compact
convex set $\Gamma$ such that the following hold:
\begin{itemize}
\item[(i)] $K(x) \subseteq \Gamma$, $\forall x \in \Gamma$;
\item[(ii)] $K$ is a nonempty, continuous, closed and convex-valued mapping on $\Gamma$. 
\end{itemize}
Then the SQVI($K,F$) has a solution.
\end{theorem}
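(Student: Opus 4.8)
The plan is to recognize that SQVI$(K,F)$ is nothing other than the deterministic quasi-variational inequality QVI$(K,\bar F)$ associated with the expected-value map $\bar F(x) \triangleq \bkE[F(x;\om)]$: a point $x \in K(x)$ solves SQVI$(K,F)$ precisely when $(y-x)^T\bar F(x) \geq 0$ for all $y \in K(x)$. Consequently, the stochastic character of the problem enters only through the regularity of $\bar F$, and the bulk of the argument reduces to verifying that the hypotheses of Harker's deterministic existence result \cite[Th.~2]{harker91generalized} are in force for the pair $(K,\bar F)$ over $\Gamma$.

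First I would establish that $\bar F$ is continuous on $\Gamma$. This is immediate from Assumption~\ref{assumption1}(ii), which posits continuity of $F(\cdot)=\bkE[F(\cdot;\om)]$; alternatively, were this not assumed outright, it would follow from the a.s.\ continuity of $F(\cdot;\om)$ together with an integrable domination (as in the Lipschitz remark following Assumption~\ref{assumption1}) by invoking Lebesgue's dominated convergence theorem to pass the limit through the expectation. Second, I would assemble the geometric data required by the fixed-point argument underlying Harker's theorem. By hypothesis $\Gamma$ is nonempty, compact and convex; condition (i) gives the self-map property $K(x)\subseteq \Gamma$ for all $x\in\Gamma$; and condition (ii), together with Assumption~\ref{assumption-det-stratset}, guarantees that $K$ is a nonempty, closed- and convex-valued, continuous (both upper and lower semicontinuous) set-valued map on $\Gamma$.

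The mechanism I would then invoke is the parametric natural map $P(x)\triangleq \Pi_{K(x)}(x-\bar F(x))$, where $\Pi_{K(x)}$ denotes Euclidean projection; its fixed points $x^\star=\Pi_{K(x^\star)}(x^\star - \bar F(x^\star))$ are exactly the solutions of QVI$(K,\bar F)$, since the projection characterization is equivalent to $x^\star \in K(x^\star)$ and $(y-x^\star)^T\bar F(x^\star)\geq 0$ for all $y\in K(x^\star)$. As $P(x)\in K(x)\subseteq\Gamma$, the map $P$ sends the compact convex set $\Gamma$ into itself, so an appeal to Brouwer's fixed-point theorem yields the desired solution.

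The main obstacle is the continuity of this parametric projection $x\mapsto \Pi_{K(x)}(x-\bar F(x))$: while projection onto a \emph{fixed} closed convex set is nonexpansive, here the target set $K(x)$ itself varies with $x$, so joint continuity of $(x,u)\mapsto \Pi_{K(x)}(u)$ must be extracted from the continuity of the set-valued map $K$ on the compact set $\Gamma$. This is precisely where condition (ii) --- and in particular the lower semicontinuity of $K$, not merely its closedness --- is indispensable. Once this continuity is secured (equivalently, once one simply cites \cite[Th.~2]{harker91generalized}, whose proof already encodes it), the existence of a solution to SQVI$(K,F)$ follows directly.
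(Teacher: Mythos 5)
Your proposal is correct and follows essentially the same route as the paper: the paper's proof simply observes that $\bkE[F(\cdot;\om)]$ is continuous by Assumption~\ref{assumption1}(ii) and that hypotheses (i) and (ii) supply exactly the conditions of \cite[Th.~2]{harker91generalized}, then cites that result. Your additional sketch of the parametric projection map and the Brouwer fixed-point argument is just an unpacking of the machinery inside Harker's theorem, not a different approach, so the two proofs coincide in substance.
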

\begin{proof}
Since $F$ is continuous by Assumption \ref{assumption1}(ii), 
all conditions of~\cite[Th.~2]{harker91generalized} hold. Thus, the SQVI($K,F$) has a solution. 
\end{proof}


The above theorem relies on properties of the map $K$ and the continuity of the map $F$ to ascertain existence of solution. By Assumption \ref{assumption1}, continuity of the map $F$ holds in the settings we consider and thus the solvability of SQVI($K,F$) follows readily. This theorem has a slightly different flavor compared to other results in this paper in the sense that we do not look at properties of the scenario-based map (other than continuity)  that then guarantee existence of solution. We have listed this theorem here for completeness as it presents an alternate perspective of looking at the question of solvability of  SQVI($K,F$).


\subsection{SQVIs with multi-valued mappings}\label{subsect:sqvi-multi}
In this section, we relax the assumption of single-valuedness of the
scenario-based mappings \uma{$G(\bullet;\om)$} and instead allow for the map
$\Phi(\bullet;\om)$ to be multi-valued. In the spirit of the rest of
this paper, our interest lies in deriving results that do not rely on
evaluation of expectation. We use the concepts of set-valued integrals
discussed in the previous section \ref{ss:set-valued background} and
require that Assumption \ref{assumption-det-stratset} holds throughout
this subsection.  Our first existence result relies on a sufficiency
condition for generalized quasi-variational
inequalities~\cite[Cor.~3.1]{1982ChanPang_GQVI}. We recall
~\cite[Cor.~3.1]{1982ChanPang_GQVI}} which can be applied to the
multi-valued SQVI($K,\Phi$).

\begin{proposition}[{\cite[Cor.~3.1]{1982ChanPang_GQVI}}]\label{exist-ns-det} \em
Consider the SQVI($K,\Phi$). 
Suppose that there exists a nonempty compact convex set $C$ such that
{the following hold:}
\begin{enumerate}
\item[(a)] $K(C) \subseteq C$;
\item[(b)] $\bkE[ \Phi(x;\om)]$ is a nonempty contractible{-valued}
{and} compact-valued upper semicontinuous mapping on $C$;
\item[(c)] $K$ is nonempty continuous convex-valued mapping on $C$.
\end{enumerate}
Then the stochastic SQVI($K,\Phi$) admits a solution
\label{prop-suff-cond-Cor3.1}
\end{proposition}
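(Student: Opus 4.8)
The plan is to recognize SQVI$(K,\Phi)$ as a generalized quasi-variational inequality driven by the \emph{expected-value} map $\bar\Phi(x) \triangleq \bkE[\Phi(x;\om)]$ and then to invoke \cite[Cor.~3.1]{1982ChanPang_GQVI} verbatim. By definition, a solution of SQVI$(K,\Phi)$ is a point $x \in K(x)$ admitting some $w \in \bkE[\Phi(x;\om)]$ with $(y-x)^Tw \geq 0$ for all $y \in K(x)$; this is precisely the generalized QVI associated with the pair $(K,\bar\Phi)$. Hence it suffices to confirm that $(K,\bar\Phi)$ meets every hypothesis of Chan and Pang's corollary on the compact convex set $C$.

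First I would check the self-map condition: hypothesis (a), $K(C)\subseteq C$, supplies exactly the invariance needed so that the problem can be posed on the compact convex set $C$. Next I would verify the requirements on the solution map: hypothesis (b) asserts that $\bkE[\Phi(x;\om)]$ is nonempty, compact-valued, contractible-valued, and upper semicontinuous on $C$, which matches the regularity the corollary demands of the right-hand-side multifunction (contractibility standing in for convexity in the fixed-point argument underpinning \cite{1982ChanPang_GQVI}). Finally, hypothesis (c) gives that $K$ is nonempty, continuous (both upper and lower semicontinuous) and convex-valued on $C$; since $C$ is compact, $K$ is automatically closed-valued, so the constraint map satisfies the continuity and convexity premises of the corollary. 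With all hypotheses in force, \cite[Cor.~3.1]{1982ChanPang_GQVI} yields a solution of the generalized QVI for $(K,\bar\Phi)$, i.e.\ of SQVI$(K,\Phi)$.

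The proof is therefore essentially a matter of hypothesis-matching rather than fresh analysis, so I do not anticipate a genuine obstacle in the argument itself. The one point that warrants care --- and where the real work would lie in an \emph{integration-free} version --- is that the premises in (b) are imposed directly on the expected-value map $\bkE[\Phi(x;\om)]$ rather than on the scenario map $\Phi(\cdot;\om)$. To make the statement self-contained from problem primitives one would have to transfer nonemptiness, compactness, contractibility, and upper semicontinuity from $\Phi(\cdot;\om)$ to its set-valued integral, using the machinery of Section \ref{ss:set-valued background} (measurable and extremal selections, Theorem \ref{th:rep_ext_pt}, and integrable boundedness to secure compactness via Theorem \ref{th:int_cpt}(c)). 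Since the proposition takes these properties of $\bkE[\Phi(x;\om)]$ as given, I would simply cite them and conclude; contractibility of the integral is the most delicate of the four to establish in general and is the natural candidate for the hard step in any strengthened variant.
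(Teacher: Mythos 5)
Your proposal is correct and takes essentially the same approach as the paper: the paper states this proposition without any proof precisely because it is a verbatim recall of \cite[Cor.~3.1]{1982ChanPang_GQVI} applied to the generalized QVI defined by the expected-value map $\bkE[\Phi(x;\om)]$, with hypotheses (a)--(c) matching that corollary's premises, so the argument is exactly the hypothesis-matching you describe. Your closing remark is also on target --- the transfer of nonemptiness, compactness, contractibility, and upper semicontinuity from the scenario map $\Phi(\cdot;\om)$ to its set-valued integral is deliberately deferred by the paper and is carried out in the subsequent Proposition~\ref{prop:suff-cond-stochastic}.
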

However, this result requires evaluating $\bkE[\Phi(x;\om)]$, an object that
admits far less tractability; instead, we develop almost-sure
sufficiency conditions that imply the requirements of Proposition~\ref{prop-suff-cond-Cor3.1}.
\begin{proposition}[Solvability of SQVI($K,\Phi)$)]\em
Suppose Assumptions \ref{lsc} ans \ref{assumption-det-stratset}  hold.
Furthermore, suppose there exists a nonempty compact convex set $C$ such that
{the following hold:}
\begin{enumerate}
\item[(a)] $K(C) \subseteq C$;
\item[(b)] $\Phi(x;\om)$ is a nonempty  upper semicontinuous mapping for $x \in C$ in
an almost-sure sense;
\item[(c)] $K$ is nonempty, continuous and convex-valued mapping on $C$.
\end{enumerate}
Then the stochastic SQVI($K,\Phi$) admits a solution.
\label{prop:suff-cond-stochastic}
\end{proposition}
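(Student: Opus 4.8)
The plan is to verify that the almost-sure hypotheses force the expectation map $\Phi(x)\triangleq\bkE[\Phi(x;\om)]$ to satisfy every requirement of Proposition~\ref{prop-suff-cond-Cor3.1}, after which solvability is immediate. Hypotheses (a) and (c) of Proposition~\ref{prop-suff-cond-Cor3.1} are identical to hypotheses (a) and (c) stated here, so they carry over verbatim. The entire burden is therefore to establish condition (b) of Proposition~\ref{prop-suff-cond-Cor3.1}, namely that $\Phi(\cdot)$ is a nonempty, contractible-valued, compact-valued, upper semicontinuous map on $C$.

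First I would dispatch the three ``structural'' properties. Nonemptiness of $\Phi(x)$ follows because, under Assumption~\ref{lsc}, $\Phi(\cdot;\om)$ has nonempty closed images and is integrably bounded, so a measurable (indeed integrable) selection exists and its integral lies in $\Phi(x)$. Compactness of $\Phi(x)$ for each $x\in C$ is exactly Theorem~\ref{th:rep_ext_pt}(c), using integrable boundedness. Convexity of $\Phi(x)$ follows from Theorem~\ref{th:rep_ext_pt}(a), which needs only the nonatomicity standing Assumption~\ref{nonatomic}; since a convex set is contractible, this yields the contractible-valued property. I note that nonatomicity also gives the useful identity $\bkE[\Phi(x;\om)]=\bkE[\,\overline{\co}\,\Phi(x;\om)]$, which I will exploit below to sidestep the possible nonconvexity of the images $\Phi(\bar x;\om)$.

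The main obstacle is upper semicontinuity of $x\mapsto\Phi(x)$ on $C$. Because each $\Phi(x)$ is compact, it suffices to show that $\Phi(\cdot)$ has a closed graph and is locally bounded; local boundedness follows from integrable boundedness together with compactness of $C$, which I would use to produce a single integrable envelope $\kappa(\om)$ dominating $\Phi(\cdot;\om)$ on $C$ (extracting such a uniform envelope over compact $C$ is itself a point requiring care). For the closed graph, take $x_k\to\bar x$ in $C$ and $y_k\in\Phi(x_k)$ with $y_k\to\bar y$, and write $y_k=\int_\Omega g_k\,d\bkP$ with measurable selections $g_k(\om)\in\Phi(x_k;\om)$. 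The envelope $\kappa$ makes $\{g_k\}$ uniformly integrable, so by the Dunford--Pettis theorem a subsequence converges weakly in $L^1(\Omega,\Real^n,\bkP)$ to some $g$, whence $\int_\Omega g\,d\bkP=\bar y$. To identify the limit I would invoke Mazur's lemma to pass to convex combinations of the $g_k$ that converge strongly, hence pointwise a.e.\ along a subsequence, and then use the upper semicontinuity of $\Phi(\cdot;\om)$ supplied by hypothesis~(b) together with the lower semicontinuity from Assumption~\ref{lsc} (so that $\Phi(\cdot;\om)$ is in fact continuous) to conclude that $g(\om)\in\overline{\co}\,\Phi(\bar x;\om)$ for almost every $\om$.

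Finally, the nonatomic identity $\bkE[\Phi(\bar x;\om)]=\bkE[\,\overline{\co}\,\Phi(\bar x;\om)]$ gives $\bar y=\int_\Omega g\,d\bkP\in\bkE[\Phi(\bar x;\om)]=\Phi(\bar x)$, establishing the closed graph and hence upper semicontinuity. With condition~(b) of Proposition~\ref{prop-suff-cond-Cor3.1} in hand alongside the inherited (a) and (c), that proposition yields a solution to SQVI$(K,\Phi)$. I expect the delicate point to be the pointwise-in-$\om$ character of upper semicontinuity: the threshold beyond which $\Phi(x_k;\om)$ enters a neighbourhood of $\Phi(\bar x;\om)$ depends on $\om$, and reconciling this with a single limit taken inside the integral is precisely what the Dunford--Pettis/Mazur route and the convex-hull identity are designed to handle.
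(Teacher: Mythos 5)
Your proposal is correct in outline and reaches the conclusion by the same reduction as the paper---verifying hypotheses (a)--(c) of Proposition~\ref{prop-suff-cond-Cor3.1}, with nonemptiness of $\bkE[\Phi(x;\om)]$ via measurable selection, contractibility via nonatomic convexity, and compactness via Theorem~\ref{th:rep_ext_pt}(c), all handled exactly as in the paper---but you treat the one substantive step, upper semicontinuity of $x \mapsto \bkE[\Phi(x;\om)]$ on $C$, by a genuinely different route. The paper disposes of that step in a single line by citing Aumann's parametric result \cite[Cor.~5.2]{aumann}; you instead prove it from scratch: closed graph plus local boundedness suffices for compact-valued maps, and you obtain the closed graph via Dunford--Pettis (uniform integrability from the envelope $\kappa$), Mazur's lemma to upgrade weak $L^1$ convergence to a.e.\ convergence of convex combinations, scenario-wise upper semicontinuity to trap the limit in $\textrm{cl}\,\textrm{co}\,\Phi(\bar x;\om)$, and the nonatomic identity $\bkE[\Phi(\bar x;\om)] = \bkE[\textrm{cl}\,\textrm{co}\,\Phi(\bar x;\om)]$ to discard the convex hull. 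This works: the $\om$-dependence of the threshold at which $\Phi(x_k;\om)$ enters $\Phi(\bar x;\om)+\epsilon B$ is harmless because the containment is applied $\om$ by $\om$ \emph{before} integrating, exactly as you anticipate. Two small remarks: the lower semicontinuity you import from Assumption~\ref{lsc} is not needed in that step (upper semicontinuity with compact images already gives $\Phi(x_k;\om) \subseteq \Phi(\bar x;\om) + \epsilon B$), and what your longer argument buys over the paper's citation is self-containedness and an explicit display of the measure-theoretic mechanism, at the cost of brevity.

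The one point of substance you flag but do not resolve is the uniform envelope: a single integrable $\kappa(\om)$ dominating $\Phi(\cdot;\om)$ over all of $C$ does \emph{not} follow from Assumption~\ref{lsc}, which posits integrable boundedness only for each fixed $x$; a pointwise integrable bound can fail to have an integrable supremum over a compact set of parameters even for single-valued maps continuous in $x$. However, this is a shared rather than a private gap: Aumann's Corollary~5.2, as invoked in the paper's proof, itself hypothesizes an integrable bound that is uniform in the parameter, so the paper's one-line argument silently assumes the very same strengthening of Assumption~\ref{lsc}. Your proof has the merit of making this hidden hypothesis visible; to close it, either Assumption~\ref{lsc}(ii) should be read (or restated) as requiring integrable boundedness locally uniformly in $x$, or the uniform envelope should be added as an explicit hypothesis.
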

\begin{proof}
From Proposition~\ref{exist-ns-det}, it suffices to show that under the
above assumptions, {$\bkE [\Phi(x;\om)]$ is a nonempty
	contractible-valued, compact-valued, upper semicontinuous mapping on
		$C$. }
\begin{enumerate}
\item[(i)] {$\bkE [\Phi(x;\om)]$ is nonempty:}
{Since $\Phi(x;\om)$ is lower semicontinuous, it is a measurable set-valued map. Since it is a measurable set-valued map with
		nonempty  closed images,  by Aumann's measurable selection
		theorem \cite[Th.~8.1.3]{aubin90setvalued}, there exists a
		measurable selection $h$ from $\Phi(x;\om)$. Since $\Phi(x;\om)$
		is integrably bounded, every measurable selection is integrable.
		Thus, $\int_\Omega h d\bkP \in \int_\Omega \Phi(x;\om) $,
		implying that $\bkE [\Phi(x;\om)]$ is nonempty.}  
{\item[(ii)] {$\bkE [\Phi(x;\om)]$ is contractible-valued:} Since the
	probability space is nonatomic by definition, we have that $\bkE
		[\Phi(x;\om)]$ is a convex set.  Since a convex set is
		contractible, we have that $\bkE [\Phi(x;\om)]$ is contractible.}
{\item[(iii)] {$\bkE [\Phi(x;\om)]$ is compact-valued:} Since
	$\Phi(x;\om)$ is integrably bounded, by
		\cite[Th.~8.6.3]{aubin90setvalued}, we get that $\bkE
		[\Phi(x;\om)]$ is compact. }
\item[(iv)]{$\bkE [\Phi(x;\om)]$ is upper semicontinuous:}
{By hypothesis, we have that  $\Phi(x;\om)$ is a measurable, integrably
	bounded and upper-semicontinuous $x \in C$. Thus, from
		~\cite[Cor.~5.2]{aumann}, it follows that $\bkE [\Phi(x;\om)]$
		is upper semicontinuous.}
\end{enumerate}
\end{proof}

{The previous result relies on the compact-valuedness of $K$ with
	respect to a compact set $C$, a property that cannot be universally
		guaranteed. An alternate result for deterministic generalized QVI
		problems \cite[Cor.~4.1]{1982ChanPang_GQVI} leverages coercivity
		properties of the map $\Phi(x)$ to claim existence of a
		solution. We state this result next. 
\begin{proposition}[{\cite[Cor.~4.1]{1982ChanPang_GQVI}}]\label{prop-ns-coerc-det}\em Let $K$ be a set-valued
map from $\Real^n$ to subsets of $\Real^n$ and $\Phi$ from $\Real^n$ to
subsets of $\Real^n$ be a measurable  integrably bounded set-valued map
with closed nonempty images. Suppose that there exists a vector $\xref$
such that $$\xref \in \underset{x \in \textrm{dom}(K)}{\bigcap} K(x)$$ 
\begin{align}
\mbox{ and }
\lim_{\|x\| \to \infty, x \in K(x)} \left[ \inf_{y \in \Phi(x)} {(x
		- \xref)^T y \over{\|x\|} }\right]= \infty.
\label{eqn:coercivity-det}
\end{align}
Suppose the following hold:
\begin{itemize}
\item[(i)] $ \Phi(x)$ is a nonempty, contractible-valued,
	compact-valued, upper semicontinuous map on $\Real^n$;
\item[(ii)] $K$ is convex-valued;
\item[(iii)] There exists a $\rho_0 > 0$ such that $K(x) \cap B_{\rho}$
is a continuous mapping for all $\rho  > \rho_0 $ where $B_{\rho}$ is a
ball of radius $\rho$ centered at the origin. 
\end{itemize}
Then for each vector $q$, SQVI($K,\Phi+q$)  has a solution. Moreover, there exists an $r >0$ such that $\|x^*\| < r$ for each solution $(x^*,y^*)$.
\end{proposition}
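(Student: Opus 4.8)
The statement is, in effect, the deterministic coercive existence result for generalized quasi-variational inequalities of Chan and Pang \cite[Cor.~4.1]{1982ChanPang_GQVI}, specialized to the expected-value map $\Phi(x) = \bkE[\Phi(x;\om)]$: since the coercivity condition \eqref{eqn:coercivity-det} and hypotheses (i)--(iii) are posed directly on $\Phi$ and $K$, the quickest route is to check that these are exactly the hypotheses of \cite[Cor.~4.1]{1982ChanPang_GQVI} and invoke it. To argue instead from the compact-set result already recorded as Proposition~\ref{exist-ns-det} (Chan--Pang Cor.~3.1), the plan is a standard \emph{truncation-and-coercivity} scheme: solve the problem on an increasing family of balls, then use coercivity to show the truncated solutions stay in the interior of the truncating ball and hence solve the original problem.

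First I would fix a radius $\rho > \max\{\rho_0, \|\xref\|\}$ and consider the truncated map $K_\rho(x) \triangleq K(x) \cap B_\rho$ on the compact convex set $C_\rho \triangleq \textrm{cl}(B_\rho)$. By construction $K_\rho(x) \subseteq C_\rho$ for all $x \in C_\rho$; $K_\rho$ inherits convex-valuedness from (ii) (an intersection of convex sets) and continuity from (iii) (valid precisely because $\rho > \rho_0$); and $\xref \in K_\rho(x)$ for every $x$, since $\xref \in \bigcap_{x \in \textrm{dom}(K)} K(x)$ and $\|\xref\| < \rho$, which also yields nonemptiness. Together with hypothesis (i) (that $\Phi$ is nonempty, contractible-, compact-valued and upper semicontinuous), these are conditions (a)--(c) of Proposition~\ref{exist-ns-det}, so the truncated problem SQVI$(K_\rho, \Phi+q)$ admits a solution $(x_\rho, y_\rho)$, namely $x_\rho \in K_\rho(x_\rho)$, $y_\rho \in \Phi(x_\rho)$, and $(z - x_\rho)^T(y_\rho + q) \ge 0$ for all $z \in K_\rho(x_\rho)$.

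Next I would remove the truncation. Taking $z = \xref \in K_\rho(x_\rho)$ in the truncated inequality gives $(x_\rho - \xref)^T(y_\rho + q) \le 0$, whence $\inf_{y \in \Phi(x_\rho)} (x_\rho - \xref)^T y \le (x_\rho - \xref)^T y_\rho \le \|x_\rho - \xref\|\,\|q\|$. Dividing by $\|x_\rho\|$, the right-hand side stays bounded (it tends to $\|q\|$ as $\|x_\rho\| \to \infty$), whereas \eqref{eqn:coercivity-det} forces $\inf_{y \in \Phi(x)} (x - \xref)^T y / \|x\| \to \infty$ along any sequence with $\|x\| \to \infty$, $x \in K(x)$. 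Since $x_\rho \in K(x_\rho)$, the family $\{x_\rho\}$ cannot escape to infinity, so there is $M$ with $\|x_\rho\| \le M$ for all $\rho$; taking $\rho > M$ makes the truncation inactive. Then, using convex-valuedness of $K(x_\rho)$ and $x_\rho \in \textrm{int}(B_\rho)$, any $z \in K(x_\rho)$ can be reached along $x_\rho + \alpha(z - x_\rho) \in K_\rho(x_\rho)$ for small $\alpha > 0$, and the truncated inequality upgrades to $(z - x_\rho)^T(y_\rho + q) \ge 0$ for all $z \in K(x_\rho)$, so $(x_\rho, y_\rho)$ solves SQVI$(K, \Phi+q)$.

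For the a priori bound I would run the same plug-in estimate on an arbitrary solution $(x^*, y^*)$: taking $z = \xref \in K(x^*)$ gives $\inf_{y \in \Phi(x^*)} (x^* - \xref)^T y / \|x^*\| \le \|q\| + o(1)$ as $\|x^*\| \to \infty$, contradicting \eqref{eqn:coercivity-det} once $\|x^*\|$ is large; this yields a uniform $r > 0$ with $\|x^*\| < r$ for every solution. The main obstacle is not the coercivity bookkeeping but the set-valued existence step: one must ensure that $K_\rho$ is a genuine continuous, convex- and closed-valued self-map of $C_\rho$ after intersecting with $B_\rho$ (which is exactly what (iii) supplies for $\rho > \rho_0$) and that $\Phi$ retains the contractible-, compact-valued upper semicontinuity required by Proposition~\ref{exist-ns-det}. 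If one instead cites \cite[Cor.~4.1]{1982ChanPang_GQVI} directly, verifying this hypothesis match is the whole content of the proof.
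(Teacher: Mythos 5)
Your proposal is correct, but it is worth noting that the paper itself offers no proof of this proposition at all: Proposition~\ref{prop-ns-coerc-det} is presented purely as a restatement of the deterministic result \cite[Cor.~4.1]{1982ChanPang_GQVI}, which is exactly the ``quickest route'' you identify in your opening sentence, so in that sense your citation option coincides with what the paper does. What you add beyond the paper is a genuine, self-contained reconstruction of the classical truncation-and-coercivity argument: you derive the coercive existence result from the compact-set result already recorded as Proposition~\ref{exist-ns-det} (Chan--Pang Cor.~3.1) by solving SQVI$(K_\rho,\Phi+q)$ with $K_\rho(x) = K(x)\cap B_\rho$ on $C_\rho = \textrm{cl}(B_\rho)$ for $\rho > \max\{\rho_0,\|\xref\|\}$, using the test point $z=\xref$ to get the uniform bound $\inf_{y\in\Phi(x_\rho)}(x_\rho-\xref)^T y/\|x_\rho\| \le \|x_\rho-\xref\|\,\|q\|/\|x_\rho\|$, invoking \eqref{eqn:coercivity-det} to keep $\{x_\rho\}$ bounded, and then using convexity of $K(x_\rho)$ plus interiority of $x_\rho$ in $B_\rho$ to deactivate the truncation; the same plug-in estimate gives the a priori bound $\|x^*\|<r$ on all solutions. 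All of these steps are sound. The trade-off is clear: the citation buys brevity and defers all technicalities to the reference, while your argument buys self-containedness and makes transparent why hypotheses (i)--(iii) are exactly what the truncated problems need. One small caveat your proof shares with the statement itself: for $K_\rho$ to be nonempty-valued on all of $C_\rho$ (as condition (c) of Proposition~\ref{exist-ns-det} requires), one implicitly needs $C_\rho \subseteq \textrm{dom}(K)$, since $\xref \in K_\rho(x)$ only for $x \in \textrm{dom}(K)$; this domain issue is suppressed in the proposition as stated and in \cite{1982ChanPang_GQVI}, so it is not a defect of your argument relative to the paper, but it deserves a remark if the truncation proof were to be included.
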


In the next proposition, by using the properties of $\Phi(x;\om)$, we
develop an integration-free analog of this result for multi-valued
SQVI$(K,\Phi)$.

\begin{proposition}[Solvability of
SQVI($K,\Phi$)]\label{prop-exist-ns}\em
Suppose Assumptions \ref{lsc} and \ref{assumption-det-stratset} hold. Suppose that there exists a vector $\xref$ such that 
\begin{itemize}
\item[(i)] $\xref \in \underset{x \in \textrm{dom}(K)}{\bigcap} K(x);$
\item[(ii)]
\begin{align}
\lim_{\|x\| \to \infty, x \in K(x)}\left[\inf_{ y \in  \Phi(x;\om)} {(x
		- \xref)^T y \over{\|x\|} }\right]= \infty, \,\mbox{ a.s.}
\label{eqn:coercivity}
\end{align}
\item[(iii)]  For the above $\xref$,  suppose  there exists a nonnegative integrable function $U(\om)$ such that  $ g(x;\om)^T(x-\xref) \geq -U(\om)$ holds  almost surely for any integrable selection $g(x;\om)$ of $\Phi(x;\om)$ and for any $x$.
\item[(iv)] $\Phi(x;\om)$ is an upper semicontinuous mapping on
$\Real^n$ in an almost-sure sense; 
\item[(v)] There exists a $\rho_0 > 0$ such that $K(x) \cap B_{\rho}$ is
a continuous mapping for all $\rho  > \rho_0.$
\end{itemize}
Then for each vector $q$ the stochastic SQVI($K,\Phi+q$)  has a solution. Moreover, there exists an $r >0$ such that $\|x^*\| < r$ for each solution $(x^*,y^*)$.
\end{proposition}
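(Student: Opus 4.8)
The plan is to deduce solvability directly from the deterministic result Proposition~\ref{prop-ns-coerc-det} (which is $\cite[Cor.~4.1]{1982ChanPang_GQVI}$) applied to the expected-value map $\Phi(x) \triangleq \bkE[\Phi(x;\om)]$. Several hypotheses of that proposition transfer immediately: hypothesis (i) here is precisely the requirement $\xref \in \bigcap_{x \in \textrm{dom}(K)} K(x)$; convex-valuedness of $K$ is Assumption~\ref{assumption-det-stratset}; and hypothesis (v) is exactly the continuity requirement on $K(x) \cap B_{\rho}$ for $\rho > \rho_0$. Thus the real content is to show (A) that $\Phi(x)$ is nonempty, contractible-valued, compact-valued, and upper semicontinuous on $\Real^n$, and (B) that the expected-value map inherits the coercivity condition $\lim_{\|x\|\to\infty,\, x\in K(x)} \inf_{y\in\Phi(x)} (x-\xref)^T y/\|x\| = \infty$ from the almost-sure condition (ii).

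For (A), I would reuse the four-step verification carried out in the proof of Proposition~\ref{prop:suff-cond-stochastic}. Nonemptiness follows because lower semicontinuity in Assumption~\ref{lsc} makes $\Phi(\cdot;\om)$ measurable with nonempty closed images, so Aumann's selection theorem $\cite[Th.~8.1.3]{aubin90setvalued}$ furnishes a measurable selection that is integrable by integrable boundedness; contractibility follows from convexity of $\bkE[\Phi(x;\om)]$ under nonatomicity (Assumption~\ref{nonatomic}); compactness follows from integrable boundedness via $\cite[Th.~8.6.3]{aubin90setvalued}$; and upper semicontinuity of $\bkE[\Phi(x;\om)]$ follows from measurability, integrable boundedness, and the almost-sure upper semicontinuity in hypothesis (iv) via $\cite[Cor.~5.2]{aumann}$. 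In particular, compactness of $\Phi(x)$ guarantees the infima appearing below are attained.

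The crux is (B), which I would establish by contradiction along the lines of part (a) of the proof of Proposition~\ref{prop:solv-multi-SVI-convex}, but carrying the $1/\|x\|$ normalization. If coercivity of the expected-value map failed, there would be a constant $M$ and a sequence $x_k \in K(x_k)$ with $\|x_k\|\to\infty$ and $\inf_{y\in\Phi(x_k)} (x_k-\xref)^T y/\|x_k\| \leq M$; by compactness the infimum is attained at some $y_k \in \Phi(x_k)$. Writing $y_k = \int_\Omega g_k(x_k;\om)\,d\bkP$ through the representation Lemma~\ref{lemma:selections}, with $g_k(x_k;\om) = \sum_{l=1}^n \lambda_l(x_k) f_l(x_k;\om)$ a convex combination of extremal selections, I obtain $\int_\Omega (x_k-\xref)^T g_k(x_k;\om)/\|x_k\|\, d\bkP \leq M$. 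Hypothesis (iii) gives the lower bound $(x_k-\xref)^T g_k(x_k;\om) \geq -U(\om)$, so the integrand is bounded below by the integrable $-U(\om)$ once $\|x_k\|\geq 1$, licensing Fatou's lemma to pass the $\liminf$ inside. The key simplification relative to the earlier proof is that, since each $f_l(x_k;\om) \in \Phi(x_k;\om)$ and $\sum_l \lambda_l(x_k) = 1$ with $\lambda_l \geq 0$, the integrand dominates the scenario infimum, i.e. $(x_k-\xref)^T g_k(x_k;\om)/\|x_k\| \geq \inf_{y\in\Phi(x_k;\om)} (x_k-\xref)^T y/\|x_k\|$, whose $\liminf$ is $+\infty$ for almost every $\om$ by hypothesis (ii). Hence the integrand's pointwise $\liminf$ is $+\infty$ a.e., forcing $\int_\Omega \liminf_k [\cdots]\,d\bkP = +\infty$ and contradicting the bound $\leq M$.

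With (A), (B), and the immediate hypotheses in hand, Proposition~\ref{prop-ns-coerc-det} applies to $(\,K,\Phi\,)$, yielding solvability of SQVI$(K,\Phi+q)$ for every $q$ together with the uniform bound $\|x^*\| < r$. The main obstacle is the careful handling of step (B): ensuring the $1/\|x_k\|$-normalized integrand retains an integrable lower bound so that Fatou applies, and correctly negating the ``$\lim = \infty$'' coercivity statement. Unlike the $L_<$-argument of Proposition~\ref{prop:solv-multi-SVI-convex}, here the convex-combination structure works in our favor, since the weighted average of selections is bounded below by the scenario infimum, so no delicate ``at least one index $l$'' extraction is needed.
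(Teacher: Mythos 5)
Your proposal is correct, and it follows the paper's overall reduction: both you and the paper verify that $\bkE[\Phi(x;\om)]$ is nonempty, contractible-valued, compact-valued and upper semicontinuous by reusing the proof of Proposition~\ref{prop:suff-cond-stochastic}, both transfer the almost-sure coercivity \eqref{eqn:coercivity} to the expected-value map, and both then invoke the deterministic result of Proposition~\ref{prop-ns-coerc-det}. Where you genuinely diverge is in the coercivity-transfer step. The paper, after writing $y_k=\int_\Omega g_k(x_k;\om)\,d\bkP$ with $g_k=\sum_j\lambda_j(x_k)f_j(x_k;\om)$ via Lemma~\ref{lemma:selections} and applying Fatou, insists on extracting a single index $j$ for which $\liminf_k (x_k-\xref)^Tf_j(x_k;\om)/\|x_k\|<\infty$ holds almost surely, and does so through an informal growth-rate argument (the $\mathbf{\Omega}(\|x_k\|^{n_j})$ claim and the assertion that the weights $\lambda_j(x_k)$ would have to vanish, contradicting $\sum_j\lambda_j(x_k)=1$); this is the weakest link in the published proof. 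You bypass it entirely with the observation that, since each $f_l(x_k;\om)\in\Phi(x_k;\om)$ and the $\lambda_l(x_k)$ are convex weights, one has $(x_k-\xref)^Tg_k(x_k;\om)\geq\inf_{y\in\Phi(x_k;\om)}(x_k-\xref)^Ty$, so hypothesis (ii) forces the pointwise $\liminf$ of the normalized integrand to $+\infty$ almost surely and Fatou then contradicts the bound $\leq M$ directly. This inequality runs in the ``right'' direction (a convex combination dominates the infimum), so no index extraction or rate heuristics are needed; the same remark would equally streamline part (a) of the proof of Proposition~\ref{prop:solv-multi-SVI-convex}. One point you should make explicit: hypothesis (iii) is stated for integrable selections of $\Phi(x;\om)$, and $g_k$ need not be a selection since the images of $\Phi$ are not assumed convex; the integrable lower bound $-U(\om)$ for $g_k$ should be obtained by applying (iii) to each $f_l$ and averaging (the paper glosses over the same point), after which your application of Fatou is fully justified.
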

\begin{proof}
First we show that {\eqref{eqn:coercivity}} implies that
the coercivity property \eqref{eqn:coercivity-det} holds for the
expected-value map $\Phi(x)=\bkE[\Phi(x;\om)]$. {We proceed by
	contradiction and assume that} $\bkE[\Phi(x;\om)]$ does not
satisfy \eqref{eqn:coercivity-det}. Then, there exists a
{subsequence $\{x_k\}_{k \in \Kscr}$} such that 
{\begin{align*}
\lim_{\substack{\|x_k\| \to \infty,\\
	x_k \in K(x_k), k \in \Kscr}}\left[\inf_{y \in \bkE [\Phi(x_k;\om)]}
{(x_k- \xref)^T y \over{\|x_k\|} } \right]< \infty. 
\end{align*}
In other words, we have that 
\begin{align*}
\liminf_{\substack{\|x_k\| \to \infty,\\
	x_k \in K(x_k)}}\left[\inf_{y \in \bkE [\Phi(x_k;\om)]}
{(x_k- \xref)^T y \over{\|x_k\|} } \right]< \infty. 
\end{align*}}
Since $\Phi(x_k;\om)$ is integrably bounded, we have that $\bkE
[\Phi(x_k;\om)]$ is compact (by \cite[Th.~8.6.3]{aubin90setvalued}) and therefore it is a closed set. Thus,  we
may conclude that there exists a $y_k \in\bkE [\Phi(x_k;\om)]$ for which the infimum
is attained and the above statement can be rewritten as follows:
{\begin{align*}
\liminf_{\substack{\|x_k\| \to \infty,\\
	x_k \in K(x_k)}} \left[ {(x_k- \xref)^T y_k \over{\|x_k\|} } \right] < \infty, \mbox{ where } y_k \in \bkE [\Phi(x_k;\om)].
\end{align*}}
By Lemma ~\ref{lemma:selections}, $y_k = \int_\Omega g_k(x_k;\om) d\bkP$   where $g_k(x_k;\om)= \sum_{j=0}^n \lambda_j(x_k) f_j(x_k;\om)$ where $\lambda_j(x_k) \geq 0, \sum_{j=0}^n\lambda_j(x_k)=1$ and each  $f_j(x_k;\om)$ is an extremal selection of  $\Phi(x_k;\om)$. Thus, we can write the above limit as
{\begin{align*}
\liminf_{\substack{\|x_k\| \to \infty,\\
	x_k \in K(x_k)}} \left[\int_{\Omega} {(x_k- \xref)^T g_k(x_k;\om)  \over{\|x_k\|} }d\bkP \right]< \infty. 
\end{align*}}
Since, $f_j(x_k;\om)$ is integrable for every $j = 0, \hdots, n$, each
$g_k(x_k;\om)$ is integrable.  Hypothesis (iii) allows for the
application of  Fatou's lemma, through which we have that the following
sequence of inequalities:
{\begin{align*}
& \int_{\Omega} \liminf_{\substack{\|x_k\| \to \infty,\\
	x_k \in K(x_k)}} \left[{(x_k- \xref)^T g_k(x_k;\om)\over{\|x_k\|} } \right]d\bkP 
 \leq \liminf_{\substack{\|x_k\| \to \infty,\\
	x_k \in K(x_k)}}  \left[ \int_{\Omega}{(x_k-
		\xref)^T g_k(x_k;\om)\over{\|x_k\|} }d\bkP \right]
 < \infty.
\end{align*}}
But this implies that the integrand must be finite almost surely. In
other words,  we obtain that 
{ \begin{align*}
\liminf_{\substack{\|x_k\| \to \infty,\\
	x_k \in K(x_k)}}\left[{(x_k- \xref)^T
	g_k(x_k;\om)\over{\|x_k\|} } \right]< \infty, \mbox{ a.s.}
\end{align*}
As a consequence,  
\begin{align*}
	\infty 	&  > \quad  \left[\liminf_{\substack{\|x_k\| \to \infty,\\
	x_k \in K(x_k)}} \left[{\uvs{g_k}(x_k;\omega)^T(x_k- \xref)}
 \over{\|x_k\|} 
	\right]\right] 
	  \geq \left[\liminf_{\substack{\|x_k\| \to \infty,\\
	x_k \in K(x_k)}} \left[\inf_{y_k \in
		\Phi(x_k;\omega)}{(x_k- \xref)^Ty_k
 \over{\|x_k\|} }
	\right]\right]\quad  \mbox{ a.s.}. 
\end{align*}
But this contradicts hypothesis ~\eqref{eqn:coercivity}.
It follows that the coercivity requirement
	~\eqref{eqn:coercivity-det} holds for $\Phi(x) =\bkE[\Phi(x;\om)]$
	and the required existence result follows.}

\gap

Further, from the proof of the
Proposition~\ref{prop:suff-cond-stochastic}, we may claim that
$\bkE[\Phi(x;\om)]$ is a nonempty contractible-valued, compact-valued,
	upper semicontinuous mapping on $\Real^n$ and from Assumption
	\ref{assumption-det-stratset}, the map $K$ is convex-valued. Thus,
	all the hypotheses of Proposition~\ref{prop-ns-coerc-det} are
	satisfied and the multi-valued SQVI($K,\Phi$) admits a solution. 
\end{proof}

\section{Stochastic complementarity problems}\label{sect:SCP}
When the set $K$ in a VI($K,F$) is a cone in
$\Real^n$, then the VI($K,F$) is equivalent to a
CP($K,F$)~\cite{Karmardian1971GCP}.  Our approach in the previous
sections required us to assume that the map $K$ was deterministic. In
practical settings, however, the map $K$ may take on a variety of forms. For
instance, $K$ may be defined by a set of algebraic resource
or budget constraints in financial applications, capacity constraints in
network settings or supply and demand constraints in economic
equilibrium settings. Naturally, these constraints may often be
expectation or risk-based constraints. In such an
instance, a complementarity approach assumes relevance. {Consider
an optimization problem with expectation constraints:
\begin{align}
\begin{aligned}
	\min & \quad \mathbb{E}[f(x;\omega)] \\
	\st & \quad \mathbb{E}[c(x,\omega)] \leq 0, \qquad (\lambda) \\
		&   \qquad \qquad \  x \geq 0. 
\end{aligned}
\end{align}
where $f(x,\omega)$ and $c(x,\omega)$ are convex and continuously
differentiable functions in $x$ for every $\omega$. Then, under a
suitable regularity condition and allowing for the interchange of
derivative and expectations, the necessary and sufficient conditions
of optimality are given by 
\begin{align} \left[
\begin{aligned}
0 \ \leq \ x  & \ \perp \ \mathbb{E}[ \nabla_x f(x;\omega) + \nabla_x
	c(x;\omega)^T \lambda] \ \geq \ 0 \\
0 \ \leq \ \lambda & \ \perp \ -\mathbb{E}[c(x;\omega)]  \ \geq \ 0. 
\end{aligned} \right] \equiv \left[ \Real_{m+n}^+ \ \ni \ z \ \perp \
F(z) \ \in \ \Real_{m+n}^+ \right],  
\end{align}
where $z \triangleq (x;\lambda)$ and $F(z)$ is defined as follows:
$$ F(z) \triangleq \pmat{\mathbb{E}[ \nabla_x f(x;\omega) + \nabla_x
	c(x;\omega)^T \lambda] \\-\mathbb{E}[c(x;\omega)]}.$$}
Specifically, in
such a case, this problem is defined in the joint space of
primal variables and the Lagrange multipliers corresponding to the
stochastic constraints.  Such a transformation yields a stochastic
complementarity problem 
SCP$(K,F)$ where the map $F$ may be expectation-valued while the set $K$
is a deterministic cone. However, such complementarity problems may also
arise naturally, as is the case when modeling frictional contact
problems ~\cite{facchinei02finite} and stochastic counterparts of such
problems emerge from attempting to model risk and uncertainty. In the
remainder of this section, we consider complementarity problems with
single-valued maps. Before proceeding, we provide a set of definitions.
\begin{definition}[CP$(K,q,M)$]\em
Given a cone $K$ in $\Real^n$, an $n \times n$ matrix $M$ and a vector $q \in \Real^n$, the complementarity problem CP$(K,q,M)$ requires an $x \in K, Mx + q \in K^*$ such that $x^T(Mx+q) = 0$.
\end{definition}
Recall, from section~\ref{sec:21}, $K^* \triangleq \{y: y^Td \geq 0, \,
	\forall d \in K\}$. The recession cone, denoted by $K_{\infty}$, is
	defined as follows. 
\begin{definition}[Recession cone $K_{\infty}$]\em
The recession cone associated with a set $K$ (not necessairly a cone) is defined as 
$ K_{\infty} \triangleq \{d: \mbox{for some } x \in K, \{x + \tau d: \tau \geq 0\} \subseteq K\}.$
\end{definition}
Note that when $K$ is a closed and convex cone, $K_{\infty} = K$. Next,
	 we define the CP kernel of a pair $(K,M)$ and define its ${\bf
		 R}_0$ variant. 
\begin{definition}[CP kernel of the pair ($K,M$) ($\Kscr(K,M)$)]\em
The CP kernel of the pair ($K,M$) denoted by $\Kscr(K,M)$ is given by $\Kscr(K,M) = \mbox{SOL}(K_{\infty},0,M).$
\end{definition}

\begin{definition}[{\bf R}$_0$ pair ($K,M$)]\em
($K,M$) is said to be an {\bf R}$_0$ pair if $\Kscr(K,M) = \{0\}.$
\end{definition}

From ~\cite[Th.~2.5.6]{facchinei02finite}, when $K$ is a closed and convex cone, ($K,M$) is an {\bf R}$_0$ pair if and only if the solutions of the CP($K,q,M$) are uniformly bounded for all $q$ belonging to a bounded set.  
\begin{definition}\em
Let $K$ be a cone in $\Real^n$ and $M$ be an $n \times n$ matrix. Then $M$ is said to be
\begin{itemize}
\item[(a)] { {copositive on $K$}} if $x^TMx ~\geq 0, \quad \forall ~x \in K$;
\item[(b)]{ {strictly copositive on $K$}} if $x^TMx ~> 0, \quad \forall ~x \in K \backslash \{0\}$.
\end{itemize}
\end{definition}

\gap

The main result in this section is an almost-sure sufficiency condition
for the solvability of a stochastic complementarity problem SCP$(K,F)$. 
We now state the following sufficiency
condition~\cite[Th.~2.6.1]{facchinei02finite} for the solvability of
deterministic complementarity problems which is subsequently used in
analyzing the stochastic generalizations.
\begin{theorem} \label{Th2.6.1}\em
Let $K$ be a nonempty, closed, and convex cone in $\Real^n$ and let $F$ be a continuous
map from $K$ into $\Real^n$. If there exists a copositive matrix $E \in
\Real^{n\times n}$ on $K$ such that ($K, E$) is an {\bf R}$_0$  pair and the union
$$\bigcup_{\tau >0} \textrm{SOL}(K, F + \tau E)$$
is bounded, then the CP($K, F$) has a solution.
\end{theorem}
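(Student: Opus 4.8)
The plan is to regularize the map $F$ by the copositive matrix $E$, to solve each regularized cone complementarity problem by a degree-theoretic homotopy, and then to recover a solution of CP$(K,F)$ by a compactness argument as the regularization is driven to zero. Throughout I would use that, because $K$ is a closed convex cone, $x$ solves CP$(K,G)$ if and only if $x$ is a zero of the natural map $G^{\textrm{nat}}(x) \triangleq x - \Pi_K(x - G(x))$, where $\Pi_K$ denotes Euclidean projection onto $K$. Since $T \triangleq \bigcup_{\tau>0}\textrm{SOL}(K,F+\tau E)$ is bounded by hypothesis, I would fix $r>0$ with $T \subset B(0,r)$, and record a scaling identity: dividing the defining relations of CP$(K,sF+\tau E)$ by $s>0$ and invoking that $K^*$ is a cone yields $\textrm{SOL}(K,sF+\tau E) = \textrm{SOL}(K,F+(\tau/s)E)$, so every such solution set, for $s\in(0,1]$ and $\tau>0$, again lies in $T \subset B(0,r)$. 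This is what lets the boundedness hypothesis, stated for the \emph{entire} family over $\tau>0$, control a whole homotopy.

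For a fixed $\tau>0$ I would then prove that CP$(K,F+\tau E)$ is solvable by computing a degree on $B(0,r)$. Consider the homotopy
\[
H(x,s) \triangleq x - \Pi_K\bigl(x - sF(x) - \tau E x\bigr), \qquad (x,s)\in \overline{B(0,r)}\times[0,1].
\]
The zeros of $H(\cdot,1)$ are precisely the solutions of CP$(K,F+\tau E)$, whereas the zeros of $H(\cdot,0)$ are the solutions of the homogeneous problem CP$(K,\tau E)$, namely $\textrm{SOL}(K_\infty,0,\tau E)=\Kscr(K,\tau E)$; by the same scaling argument used in Lemma~\ref{lemma:T_beta}, $(K,\tau E)$ remains an \textbf{R}$_0$ pair, so this set is $\{0\}$. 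Using the scaling identity, every zero of $H(\cdot,s)$ with $s\in[0,1]$ lies in $B(0,r)$, so the homotopy has no zero on $\partial B(0,r)$ and is admissible. Homotopy invariance of degree then gives $\deg(H(\cdot,1),B(0,r)) = \deg(H(\cdot,0),B(0,r))$, and the right-hand side equals the index at the origin of the natural map of the copositive \textbf{R}$_0$ problem CP$(K,\tau E)$. Granting this index is nonzero, $H(\cdot,1)$ has a zero $x_\tau \in B(0,r)$, i.e. CP$(K,F+\tau E)$ admits a solution $x_\tau \in T$.

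Finally I would let $\tau\downarrow 0$. The solutions $\{x_\tau\}$ lie in the compact set $\overline{B(0,r)}$, so a subsequence converges, $x_{\tau_k}\to x^*\in K$. Each satisfies $F(x_{\tau_k})+\tau_k E x_{\tau_k}\in K^*$ and $x_{\tau_k}^T(F(x_{\tau_k})+\tau_k E x_{\tau_k})=0$; since $F$ is continuous and $\tau_k E x_{\tau_k}\to 0$, passing to the limit with $K$ and $K^*$ closed yields $x^*\in K$, $F(x^*)\in K^*$ and $(x^*)^TF(x^*)=0$, so $x^*$ solves CP$(K,F)$.

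The hard part will be the degree computation flagged in the second step: showing that the index at the origin of the natural map of the homogeneous copositive \textbf{R}$_0$ problem is nonzero. This is exactly where copositivity is needed beyond the \textbf{R}$_0$ property, and I would secure it through a further admissible homotopy $x - \Pi_K\bigl(x-(E+s\epsilon I)x\bigr)$, $s\in[0,1]$, connecting $E$ to the strictly copositive map $E+\epsilon I$: strict copositivity makes the corresponding natural map coercive with degree one, while the \textbf{R}$_0$ property keeps the origin the unique homogeneous solution throughout, preserving admissibility on every sphere about the origin.
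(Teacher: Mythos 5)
The paper never proves this statement: it is imported verbatim from Facchinei and Pang as~\cite[Th.~2.6.1]{facchinei02finite} and used as a black box, so there is no internal proof to compare against. Your proposal is a correct reconstruction of the standard degree-theoretic argument behind that citation, with the same three-part structure: the scaling identity $\textrm{SOL}(K,sF+\tau E)=\textrm{SOL}(K,F+(\tau/s)E)$ is exactly what converts the hypothesis that $\bigcup_{\tau>0}\textrm{SOL}(K,F+\tau E)$ is bounded into admissibility of the homotopy $x-\Pi_K(x-sF(x)-\tau Ex)$ on $B(0,r)$; the nonzero index of the homogeneous copositive ${\bf R}_0$ problem then yields solvability of each regularized problem inside $B(0,r)$; and the passage $\tau\downarrow 0$ is a routine compactness argument. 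The one step you should tighten is the last one: ``strict copositivity makes the natural map coercive with degree one'' is not by itself a proof, because norm-coercivity of a continuous map does not force its degree to be nonzero (e.g., $x\mapsto(|x_1|,x_2)$ is norm-coercive with degree zero). Two standard ways to finish: either append one more admissible matrix homotopy $(1-s)(E+\epsilon I)+sI$ connecting $E+\epsilon I$ to the identity --- every matrix in this family is strictly copositive, hence each homogeneous solution set is $\{0\}$ and admissibility holds on every sphere about the origin, while the natural map of $I$ is the identity map, of degree one --- or invoke the coercivity-based degree result for variational inequalities (\cite[Prop.~2.2.3]{facchinei02finite}) with $\xref=0$, noting that for a strictly copositive matrix $M$ the set $\{x\in K: x^TMx<0\}$ is empty. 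Also note that the index you actually need is that of the natural map of $\tau E$, not $E$; this costs nothing, since $\tau E$ is again copositive with $(K,\tau E)$ an ${\bf R}_0$ pair (your own appeal to Lemma~\ref{lemma:T_beta}), so the same argument applies with $\tau E$ in place of $E$.
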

Recall that in our notation for SCP($K,F$), $F(x)=\bkE[G(x;\om)]$. 

We are now prepared to prove our main result.

\begin{theorem}[Solvability of SCP($K,F$)]\label{prop:SCPSolvability} \em
 Consider the stochastic complementarity problem SCP($K, F$). Suppose Assumption \ref{assumption1} holds
 for the mapping $F$. Further suppose either (i.a) or (i.b)
	hold in addition to (ii):
 \begin{itemize}
\item[(i.a)] Suppose $K \triangleq \Real_n^+$ and $M$ is a {nonzero}  copositive
			 matrix on $K$ such that  $(\Real_n^+,{M})$ is an {\bf
				 R}$_0$ pair. In addition, suppose the following holds:
\begin{align}\label{eqn:H_lowerbound1}
 {\liminf_{x\in K, \|x\| \to \infty } G(x;\omega) > 0, \mbox{ a.s.}}
\end{align}
\item[(i.b)] Suppose $K$ is a nonempty convex cone and $M$ is a {nonzero}  copositive
			 matrix on $K$ such that  $(K,{M})$ is an {\bf
				 R}$_0$ pair. In addition, suppose the following holds:
\begin{align}\label{eqn:SCPhypo}
\liminf_{x \in K, \|x\| \to \infty} \left[x^TG(x;\om) \right] >  0,~
\mbox{ a.s.}
\end{align}
\item[(ii)]  Suppose there exists a nonnegative integrable function
{$u(\omega)$} such that $x^TG(x;\om)  \geq -u(\om)$ holds almost surely
for any $x$. 
\end{itemize}
Then the stochastic complementarity problem SCP($K, F$) admits a solution.
\end{theorem}
\begin{proof}
\noindent {\bf (i.a.) and (ii) hold.} Note that if
\eqref{eqn:H_lowerbound1}  holds,  since  {$K$ is the
		nonnegative orthant and }$\|x_k\| \to \infty$ as $k \to \infty$,
			{we must have} $\|x_k\|> 0$ for sufficiently large $k$. 
{From  hypothesis \eqref{eqn:H_lowerbound1}, we have that
	\eqref{eqn:xH_lowerbound} holds:}
	\begin{align}
	 {\liminf_{\|x\| \to \infty, x\in K }
		 \left[ x^TG(x;\omega) \right] > 0,\mbox{ a.s.} }
		 \label{eqn:xH_lowerbound}
		 \end{align}
We proceed to show that the set $T$ is bounded where $(\Real_n^+,M)$ is
an ${\bf R}_0$ pair and $T$ is defined as 
\begin{align}\label{TomegaBounded}
   T \triangleq \bigcup_{\tau >0}\textrm{SOL}(K,{F}+\tau M). 
\end{align}
It suffices to show that there exists an $m > 0$ such that for all $x
\in K, \|x\|> m$ implies $x \notin T$.  Suppose there is no such finite
$m$, implying that 
\begin{align}\label{eqnSCP} \mbox{ for any } m > 0 , \ \exists \ x \ \in \ K, \|x \| \ > \  m \mbox{ and } x \ \in \ T.  \end{align}
For each $k > 0$, choose $x_k \in K$  such that $\|x_k\| > k$ and
 $ x_k
\in T$.  For this sequence $\|x_k\| \to \infty$.  Since $ \|x_k\| > k$
observe that for any $k$, $x_k \neq 0$.  Now, for each $k$, since $x_k
\in T$, it follows that $x_k \in \textrm{SOL}(K,F+\tau_k M)$ for some
$\tau_k > 0 $.  Thus, for each $k$ we   have $x_k^TF(x_k) + \tau_k
x_k^TMx_k = 0$. Since $x_k \neq 0$ this means that for each $k$, 
	\begin{align}\label{eqn:Negative}
	x_k^TF(x_k) =  \bkE[x_k^TG(x_k;\om)] = - \tau_k x_k^TMx_k . 
	\end{align}
	Observe that,  since $x_k \in K$ and $\|x_k \| \to \infty$,  we have
	$x_k \neq 0$. Further, since $M$ is copositive we have $ x_k^TMx_k
	\geq
	0$. Since $\|x_k \| \to \infty$, we have that $ x_k^TMx_k\geq 0$.
	Since
	$\tau_k > 0$, there are {three} possibilities for the sequence
	$\tau_k
	x_k^TMx_k$: {$\tau_k
		x_k^TMx_k \to +\infty$, $\tau_k
			x_k^TMx_k \to \epsilon > 0$, or $\tau_k
			x_k^TMx_k \to 0$ as $k \to \infty$}. In {any of these
				cases}, as $k
				\to \infty$ from \eqref{eqn:Negative} we can conclude 
				\begin{align}\label{eqn:Neg}
				\liminf_{k \to \infty} ~ x_k^TF(x_k) =  \liminf_{k \to
					\infty} ~ \left[\bkE[x_k^TG(x_k;\om)]\right]
					=\liminf_{k \to \infty} \left[ - \tau_k x_k^TMx_k
					\right]  \leq 0.
					\end{align}
					On the other hand, by \eqref{eqn:xH_lowerbound} we
					have that 
					\begin{align}\label{eqn:lbd}
					\liminf_{k \to \infty, {x_k \in K}}
					~x_k^TG(x_k;\om) > 0~ \mbox{ a.s.}
					\end{align}
					By hypothesis (ii), Fatou's lemma can be applied,
					giving us 
					\begin{align}\label{eqn:Positive}
					\liminf_{k \to \infty} ~ x_k^TF(x_k) = \liminf_{k
						\to \infty}
						~\left[\bkE\left[x_k^TG(x_k;\om)\right]
						\right]\geq \bkE\left[ \liminf_{k \to \infty}
						x_k^TG(x_k;\om)\right] > 0,
						\end{align}
						where the last inequality follows from
						\eqref{eqn:lbd}. But this
						contradicts \eqref{eqn:Neg} and implies that
						there is a scalar $m$ such
						that $x \in K, \|x\|> m$ implies $x \notin T$.
						In other words, $T$ is
						bounded. We have shown that all the conditions
						of  Theorem~\ref{Th2.6.1}
						are satisfied and we may conclude that the
						stochastic complementarity problem SCP($K, F$)
	has a solution.

\gap

\noindent {{\bf (i.b.) and (ii) hold.} 
We proceed to show that the set $T$ is bounded where $(K,M)$ is an ${\bf
	R}_0$ pair and $T$ is defined as follows:
\begin{align}\label{TomegaBounded}
 T \triangleq \bigcup_{\tau >0}\textrm{SOL}(K,F+\tau M). 
\end{align}
As earlier, it suffices to show that there exists an $m > 0$ such that for all $x \in K, \|x\|> m$ implies $x \notin T$.
Suppose there is no such $m$ implying  that
\begin{align}\label{eqnSCP}
\mbox{ for any } m > 0 ~\exists x \in K, \|x \| > m \mbox{ and }  x \in T.
\end{align}
Construct a sequence $\{x_k\}$ as follows: 
For each $m=k > 0$, choose $x_k \in K$  such that $x_k \in K , \|x_k\| > k$ and $ x_k \in T$. For this sequence $\|x_k\| \to \infty$.
Since $ \|x_k\| > k$ observe that for any $k$, $x_k \neq 0$.
Now, for each $k$ since  $x_k \in T$,   it follows that $x_k \in \textrm{SOL}(K,F+\tau_k M)$ for some $\tau_k > 0 $. Thus, for each $k$, we   have $x_k> 0$, $F(x_k)+\tau_k Mx_k \geq 0, \tau_k > 0$ and $x_k^TF(x_k) + \tau_k x_k^TMx_k = 0$. Since $x_k \neq 0$ and $M$ is copositive and $(K,M)$ is an ${\bf R}_0$ pair we have that $x_k^T Mx_k \geq 0 $. This together with $\tau_k > 0$ means that for each $k$, 
\begin{align*}
\bkE[x_k^TG(x_k;\om)] = x_k^TF(x_k) = - \tau_k x_k^TMx_k  \leq 0.
\end{align*}
This gives us that
\begin{align}\label{eqn:negExp}
\liminf_{k \to \infty} ~ \bkE[x_k^TG(x_k;\om)] = \liminf_{k \to \infty} ~ \left[- \tau_k x_k^TMx_k \right] \leq 0.
\end{align}
On the other hand,  since $x_k \in K$ and $\|x_k \| \to \infty$, by hypothesis \eqref{eqn:SCPhypo} we have that $$\liminf_{k \to \infty} ~x_k^TG(x_k;\om) > 0~ \mbox{ a.s.}$$
This means that 
\begin{align}\label{eqn:exp_pos}
\bkE \left[ \liminf_{k \to \infty} ~x_k^TG(x_k;\om) \right]  > 0.
\end{align}
Now, by hypothesis (ii), Fatou's lemma is applicable, implying that
$$\liminf_{k \to \infty} \bkE \left[ x_k^TG(x_k;\om) \right] \geq \bkE \left[ \liminf_{k \to \infty} ~x_k^TH(x_k;\om) \right]  > 0,$$
where the last inequality follows from \eqref{eqn:exp_pos}. This contradicts \eqref{eqn:negExp}. This contradiction implies that there exists an $m$ such that $x \in K, \|x\|> m$ implies $x \notin T$. In other words, $T$ is bounded.
By hypothesis, we have that there exists a copositive matrix $M$ on $K$
such that  ($K,M$) is an ${\bf R}_0$ pair and we have shown that $T$ is
bounded. Thus, all conditions of  Theorem~\ref{Th2.6.1} are satisfied
and we may conclude that the stochastic complementarity problem SCP($K,
		F$) has a solution. }
\end{proof}
\noindent {\bf Remark.} \uvs{It is worth emphasizing that while hypothesis
(i.a) implies hypothesis (i.b) holds, in general, if $(\Real_n^+,M)$ is
an ${\bf R}_0$ pair, it is not true that the same matrix $M$ forms an
$(K,M)$ is an ${\bf R}_0$ pair.}

\gap

We now consider several corollaries, the first of which requires defining
a \textit{co-coercive mapping.}
\begin{definition}[Co-coercive function]\em
A mapping $F: K \subseteq \Real^n \to \Real^n$ is said to be co-coercive on $K$ if there exists a constant $c > 0 $ such that
$$ (F(x)-F(y))^T (x-y) \geq c {\|F(x)-F(y)\|}^2,~ \forall x,y \in K.$$
\end{definition}

\gap

We state Cor.~\cite[Cor. 2.6.3]{facchinei02finite}, which is used in the proof of the next proposition. 

\begin{corollary}\label{cor:2.6.3}\em
Let $K$ be a pointed, closed, convex cone in $\Real^n$ and let $\us{G}:
\Real^n \to \Real^n$ be a continuous map. If $F$ is co-coercive on
$\Real^n$, then CP$(K,F)$ has a nonempty compact solution set if and
only if there exists a vector $u \in \Real^n$ satisfying $F(u) \in
\textrm{ int}(K^*)$.
\end{corollary}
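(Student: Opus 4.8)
The statement is an equivalence for a co-coercive map over a \emph{pointed} closed convex cone, so the plan is to treat the two implications separately, exploiting two elementary consequences of co-coercivity: that $F$ is monotone (take the defining inequality and discard the nonnegative right-hand side) and that $F$ is globally Lipschitz with constant $1/c$ (combine the defining inequality with Cauchy--Schwarz). I would also record the standard reformulation of strict feasibility: if $F(u)\in\textrm{int}(K^*)$, then there is a $\delta>0$ with $x^{T}F(u)\ge \delta\|x\|$ for every $x\in K$, obtained by testing the vector $F(u)-\delta x/\|x\|\in K^*$ against $x\in K$. Pointedness of $K$ enters precisely here, since it is equivalent to $K^*$ being solid, guaranteeing $\textrm{int}(K^*)\neq\emptyset$.

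For the ``if'' direction I would invoke Theorem~\ref{Th2.6.1} with the copositive matrix $E=I$: on any cone $(K,I)$ is an ${\bf R}_0$ pair because $\textrm{SOL}(K,0,I)=\{x\in K:\|x\|^2=0\}=\{0\}$. It then suffices to bound $\bigcup_{\tau>0}\textrm{SOL}(K,F+\tau I)$. For $x_\tau\in\textrm{SOL}(K,F+\tau I)$ one has $x_\tau^{T}F(x_\tau)=-\tau\|x_\tau\|^2$; feeding $x=x_\tau,\ y=u$ into co-coercivity and writing $a=\|F(x_\tau)-F(u)\|$ yields, after using $x_\tau^{T}F(u)\ge\delta\|x_\tau\|$ and discarding the nonnegative term $\tau\|x_\tau\|^2$, the inequality $ca^2+\delta\|x_\tau\|\le a\|u\|$. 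Maximizing the right side over $a$ gives the uniform bound $\|x_\tau\|\le \|u\|^2/(4c\delta)$, independent of $\tau$, so the union is bounded and Theorem~\ref{Th2.6.1} delivers a solution. The same computation with $\tau=0$ bounds $\textrm{SOL}(K,F)$ itself, and since this set is closed ($F$ continuous, $K,K^*$ closed) it is compact; hence the solution set is nonempty and compact.

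For the ``only if'' direction I would argue the contrapositive: assume $F(u)\notin\textrm{int}(K^*)$ for every $u$, i.e.\ $\textrm{range}(F)\cap\textrm{int}(K^*)=\emptyset$, and show $\textrm{SOL}(K,F)$ is empty or unbounded. Since a continuous everywhere-defined monotone map on $\Real^n$ is maximal monotone, $\overline{\textrm{range}(F)}$ is convex, and it remains disjoint from the open set $\textrm{int}(K^*)$. Separating these two convex sets and using that $K^*$ is a cone forces the separating vector $d$ to lie in $K^{**}=K$ with $d\neq 0$ and $d^{T}F(x)\le 0$ for all $x$. Now take any $x^*\in\textrm{SOL}(K,F)$ (if the set is empty we are done) and test the recession ray $x^*+td,\ t\ge0$, which lies in $K$. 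Monotonicity gives $d^{T}F(x^*+td)\ge d^{T}F(x^*)$, while $d\in K$ and $F(x^*)\in K^*$ give $d^{T}F(x^*)\ge0$; together with $d^{T}F(\cdot)\le0$ this pins $d^{T}F(x^*+td)=0=d^{T}F(x^*)$. Feeding $x=x^*+td,\ y=x^*$ into co-coercivity then gives $0=t\,d^{T}(F(x^*+td)-F(x^*))\ge c\|F(x^*+td)-F(x^*)\|^2$, so $F$ is constant along the ray; checking $x^*+td\in K$, $F(x^*+td)=F(x^*)\in K^*$, and $(x^*+td)^{T}F(x^*+td)=0$ shows the entire ray solves the CP, so $\textrm{SOL}(K,F)$ is unbounded, the desired contradiction.

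The routine part is the ``if'' direction, which collapses to a one-line coercivity estimate once Theorem~\ref{Th2.6.1} is in hand. The main obstacle is the ``only if'' direction: it hinges on the convexity of $\overline{\textrm{range}(F)}$ (a maximal-monotonicity fact I would need to quote), on arranging the separation to produce a recession direction $d\in K\setminus\{0\}$ that annihilates $F$, and on the rigidity observation that co-coercivity upgrades ``$F$ is flat in the $d$-direction'' to ``$F$ is literally constant along the ray'', which is exactly what certifies the ray as a one-parameter family of solutions.
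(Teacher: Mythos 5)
Your proposal is correct, but there is nothing in the paper to compare it against: the paper never proves this statement, it is imported verbatim from Facchinei and Pang \cite[Cor.~2.6.3]{facchinei02finite} and used as a black box in the subsequent proposition on solvability of SCP$(K,H)$ under co-coercivity. Your argument is therefore a genuine, self-contained alternative, and both halves check out. The ``if'' half is in the same spirit as the machinery the paper does use elsewhere: you instantiate Theorem~\ref{Th2.6.1} with $E=I$ (the same copositive/${\bf R}_0$ device driving Theorem~\ref{prop:SCPSolvability} and Proposition~\ref{prop:SCPxH}), and the estimate $ca^2+\delta\|x_\tau\|\le a\|u\|$ with $a=\|F(x_\tau)-F(u)\|$ yields the clean uniform bound $\|x_\tau\|\le\|u\|^2/(4c\delta)$, which bounds the regularized solutions independently of $\tau$ and, taken at $\tau=0$, bounds $\textrm{SOL}(K,F)$ itself, giving compactness. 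The ``only if'' half is the genuinely nontrivial part and diverges from anything in the paper: it rests on two quoted facts (a continuous monotone map with full domain is maximal monotone; the closure of the range of a maximal monotone operator is convex), on a separation argument that uses pointedness of $K$ (equivalently $\textrm{int}(K^*)\neq\emptyset$) and the conic structure of $K^*$ to produce $d\in K^{**}=K$, $d\neq 0$, with $d^TF(\cdot)\le 0$, and on the rigidity step where co-coercivity upgrades $d^T(F(x^*+td)-F(x^*))=0$ to $F(x^*+td)=F(x^*)$, so the whole ray solves the CP and the solution set is unbounded. The passage from $\textrm{range}(F)\cap\textrm{int}(K^*)=\emptyset$ to $\overline{\textrm{range}(F)}\cap\textrm{int}(K^*)=\emptyset$ (openness) and the verification that the ray is feasible and complementary are both sound. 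What the paper's citation buys is brevity; what your proof buys is a self-contained derivation that makes explicit where pointedness, strict feasibility, and co-coercivity (as opposed to mere monotonicity) enter --- indeed co-coercivity is used twice, once for the uniform bound and once for the ray rigidity.
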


Our next result provides sufficiency conditions for the existence of a
solution to an SCP when an additional co-coercivity assumption is imposed on
the mapping. In particular, we assume that the mapping $H(z;\om)$ is
co-coercive for almost every $\omega \in \Omega.$ 
\begin{proposition}[Solvability under co-coercivity]\em
Let $K$ be a pointed, closed and convex cone in $\Real^n$. Suppose
Assumption~\ref{assumption1} holds for the mapping $G(x;\om)$ and $G(x;\omega)$ is co-coercive on $K$ {with constant
		$\eta(\omega) > 0$.  Suppose $\eta(\omega) \geq \bar\eta >0$ for
	all $\omega \in \bar \Omega$  where $\bkP(\bar \Omega) = 1$} and  there
	exists a deterministic vector $u \in \Real^{n}$ satisfying $G(u;\om) \in
	\textrm{int}(K^*)$  in an almost sure sense. Then the solution set of the SCP($K, F$) is a nonempty
	and compact set.
\end{proposition}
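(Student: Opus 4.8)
The plan is to reduce the statement to the deterministic Corollary~\ref{cor:2.6.3} applied to the expected-value map $H(x) \triangleq \bkE[H(x;\om)]$. That corollary certifies a nonempty compact solution set for CP$(K,H)$ precisely when (a) $H$ is continuous, (b) $H$ is co-coercive, and (c) there is a vector $u$ with $H(u) \in \textrm{int}(K^*)$. Continuity of $H$ is immediate from Assumption~\ref{assumption1}(ii), so the real work lies in transferring the scenariowise co-coercivity and the scenariowise interior condition to the expectation.

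For co-coercivity, I would begin from the almost-sure inequality, valid for all $x,y \in K$,
\[
(H(x;\om)-H(y;\om))^T(x-y) \ \geq\ \eta(\om)\,\|H(x;\om)-H(y;\om)\|^2 \ \geq\ \bar\eta\,\|H(x;\om)-H(y;\om)\|^2,
\]
where the second bound uses $\eta(\om)\geq\bar\eta$ on $\bar\Omega$. This same inequality gives $\|H(x;\om)-H(y;\om)\|^2 \leq \tfrac{1}{\bar\eta}(H(x;\om)-H(y;\om))^T(x-y)$, whose right-hand side is integrable (the map is integrable in $\om$ and $x,y$ are fixed), so the squared term is integrable and every expectation below is finite. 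Taking expectations and using linearity on the left yields
\[
(H(x)-H(y))^T(x-y)\ \geq\ \bar\eta\,\bkE\!\left[\|H(x;\om)-H(y;\om)\|^2\right].
\]
Applying Jensen's inequality to the convex map $z\mapsto\|z\|^2$ gives $\bkE[\|Z\|^2]\geq\|\bkE[Z]\|^2$ with $Z=H(x;\om)-H(y;\om)$, and since $\bkE[Z]=H(x)-H(y)$ I obtain $(H(x)-H(y))^T(x-y)\geq\bar\eta\|H(x)-H(y)\|^2$, i.e. $H$ is co-coercive on $K$ with modulus $\bar\eta>0$.

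The delicate step is verifying $H(u)\in\textrm{int}(K^*)$, since $\textrm{int}(K^*)$ is not closed and one cannot simply assert that the expectation of points lying in it also lies in it. The device I would use is the dual characterization: because $K$ is a pointed, closed convex cone, $K^*$ is solid and $y\in\textrm{int}(K^*)$ if and only if $y^Td>0$ for every $d\in K\setminus\{0\}$. The hypothesis supplies a single full-measure set on which $H(u;\om)\in\textrm{int}(K^*)$, so for each fixed $d\in K\setminus\{0\}$ we have $H(u;\om)^Td>0$ almost surely; as $H(u;\om)^Td$ is integrable and almost surely positive, $H(u)^Td=\bkE[H(u;\om)^Td]>0$. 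Since $d\in K\setminus\{0\}$ was arbitrary, the dual characterization delivers $H(u)\in\textrm{int}(K^*)$.

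With (a)--(c) established for the continuous, co-coercive map $H$, Corollary~\ref{cor:2.6.3} applies directly and certifies that the solution set of SCP$(K,H)$ is nonempty and compact. I expect the main obstacle to be the interior argument above: one must resist passing the expectation through $\textrm{int}(K^*)$ set-theoretically, and instead reduce to scalar strict-positivity tests against each nonzero $d\in K$, relying on the fact that the single exceptional null set furnished by the hypothesis lets all these pointwise tests hold simultaneously.
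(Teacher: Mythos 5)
Your proposal is correct and follows essentially the same route as the paper's proof: reduce to Corollary~\ref{cor:2.6.3}, establish co-coercivity of $\bkE[H(\cdot;\om)]$ by combining the almost-sure inequality with the uniform bound $\eta(\om)\geq\bar\eta$ and Jensen's inequality, and transfer the interior condition by testing strict positivity of $\bkE[H(u;\om)]^Td$ against nonzero $d\in K$. The only cosmetic differences are that the paper runs the interior step as a contradiction argument (first showing membership in $K^*$, then ruling out the boundary) where you invoke the dual characterization directly, and that you additionally note the integrability of the squared difference, a point the paper leaves implicit.
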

\begin{proof}
First we show that, $\bkE \left [G(x;\om)\right ]$ is co-coercive in
	$x$.  We begin by noting that  
{\begin{align*}
 (x-y)^T \left( \bkE\left[ G(x;\om)\right ] -\bkE\left [G(y;\om)\right ] \right) 
 & = \int_{\Omega}( x-y)^T (G(x;\om) - G(y;\om) ) d\bkP\\
 & =  \int_{\bar \Omega}( x-y)^T (G(x;\om) - G(y;\om) ) d\bkP,
\end{align*}
 where the second equality follows by noting that $\mathbb{P}(\bar
		 \Omega) = 1$. This allows us to conclude that
  \begin{align*}\int_{\bar \Omega}( x-y)^T (G(x;\om) - G(y;\om) ) d\bkP
 & \geq   \int_{\bar \Omega}  \eta(\omega) \|G(x;\om) - G(y;\om) \|^2 d\bkP\\
 & \geq  \bar \eta \int_{\bar \Omega}  \left  \|G(x;\om) - G(y;\om)
 \right \|^2 d\bkP,
\end{align*}}\noindent where the first  inequality follows from the co-coercivity of
$G(x;\omega)$, the second inequality follows from noting that 
$\eta(\omega) \geq \bar \eta$ for $\omega \in \bar \Omega$, a set of
unitary measure. Finally, by again recalling that $\bar \Omega$ has
measure one and by leveraging Jensen's inequality since ${\|.\|}^2$ is a convex
function, the required result follows:
$$ \bar \eta \int_{\bar \Omega}  \left  \|G(x;\om) - G(y;\om)  \right
\|^2 d\bkP = \bar \eta \int_{\Omega}  \left  \|G(x;\om) - G(y;\om)  \right \|^2 d\bkP
 \geq \bar \eta \left \|\bkE  \left [G(x;\om)\right ] - \bkE \left
 [G(y;\om)\right ]\right \|^2.$$

Further, since $G(u;\om) \in \textrm{int}(K^*)$ holds almost surely for
a deterministic vector $u$, we have that for all $x \in K$, $G(u;\om)^T
x \geq 0 $ holds almost surely. This implies that for all $x \in K, \bkE
\left[G(u;\om)\right]^T x \geq 0 $ holds. Thus, there exists a $u \in
\Real^{n}$ such that $ \bkE \left[G(u;\om)\right] \in K^*$. 

\gap

{It remains 
to show that }$\bkE\left[G(u;\om)\right]$ lies in $\textrm{int} (K^*)$.
If $\bkE\left[ G(u;\om)\right] \notin \textrm{int}(K^*)$, then there exists an $x
\in K$ such that $ \bkE\left [ G(u;\om)\right]^T x = 0 $. Since $x \in
K$ and by assumption, $G(u;\om) \in  \textrm{int} (K^*)$ almost surely,
	it follows that $G(u;\om)^T x > 0 $ almost surely, implying that $ \bkE \left[H(u;\om)\right]^T x > 0$. Thus,
we arrive at a contradiction, proving  that $\bkE \left[G(u;\om) \right] \in
\textrm{int}(K^*)$. Thus, by Corollary ~\ref{cor:2.6.3}, since $ \bkE\left[
H(z;\om)\right]$ is co-coercive and there is a vector $u \in \Real^{n}$ such that
$\bkE\left[ G(u; \om)\right] \in \mbox{int} (K^*),$ it follows that SCP($K,F$) has a
nonempty compact solution set. 
\end{proof}
The next corollary is a direct application of Theorem~\ref{Th2.6.1} to
SCP($K,F$) when $E$ is the identity matrix and can be viewed as a
{\em theorem of the alternative} for CPs.
\begin{corollary}[Cor.~2.6.2~\cite{facchinei02finite}]\label{cor:SCPsolIOR}\em
Let $K$ be a closed convex cone in $\Real^n$ and let $G(x;\om)$ satisfy Assumption \ref{assumption1}. 
 Either SCP($K,F$) has a solution or there exists an unbounded sequence of vectors $\{x_k\}$ and a sequence of positive scalars $\{\tau_k\}$ such that for every $k$, the following complementarity condition holds: 
$$ K \ni x_k~ \perp~ \bkE\left[G(x_k;\om)\right]+ \tau_k x_k\in K^*.$$
\end{corollary}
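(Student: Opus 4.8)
The plan is to recognize this dichotomy as nothing more than Theorem~\ref{Th2.6.1} specialized to the choice $E = I$ and read in contrapositive form. First I would set $F(x) \triangleq \bkE[H(x;\om)] = H(x)$, which is continuous in $x$ by Assumption~\ref{assumption1}(ii); hence $F$ meets the continuity hypothesis of Theorem~\ref{Th2.6.1} on the closed convex cone $K$, and CP$(K,F)$ is by definition exactly SCP$(K,H)$.

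Next I would verify that the identity matrix $E = I$ supplies an admissible copositive ${\bf R}_0$ matrix. Copositivity is immediate, since $x^T I x = \|x\|^2 \geq 0$ for all $x \in K$ (indeed $I$ is strictly copositive). For the ${\bf R}_0$ property, recall that $K$ being a closed convex cone gives $K_\infty = K$, so $\Kscr(K, I) = \textrm{SOL}(K, 0, I)$; any $d$ in this set obeys $K \ni d \perp d \in K^*$, whence $d^T d = 0$ and $d = 0$. Thus $\Kscr(K,I) = \{0\}$ and $(K, I)$ is an ${\bf R}_0$ pair.

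With these two ingredients in hand, Theorem~\ref{Th2.6.1} asserts that if the union $T \triangleq \bigcup_{\tau > 0} \textrm{SOL}(K, H + \tau I)$ is bounded, then SCP$(K,H)$ is solvable. Taking the contrapositive, if SCP$(K,H)$ fails to be solvable then $T$ must be unbounded. Unboundedness of $T$ furnishes a sequence $\{x_k\} \subseteq T$ with $\|x_k\| \to \infty$, and for each $k$ membership in $T$ provides a scalar $\tau_k > 0$ with $x_k \in \textrm{SOL}(K, H + \tau_k I)$. Unwinding this membership, and using $I x_k = x_k$ together with $H(x_k) = \bkE[H(x_k;\om)]$, yields precisely
$$ K \ni x_k \perp \bkE[H(x_k;\om)] + \tau_k x_k \in K^*, $$
as claimed.

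I do not anticipate a genuine obstacle here: the entire content is the recognition that $I$ is always a valid copositive ${\bf R}_0$ matrix, so among the hypotheses of Theorem~\ref{Th2.6.1} the only one that can be violated is the boundedness of $T$, and its failure is exactly the second alternative. The only point demanding any care is the elementary ${\bf R}_0$ verification for $(K,I)$, which relies on $K_\infty = K$ for a closed convex cone.
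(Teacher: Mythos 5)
Your proposal is correct and follows exactly the route the paper intends: the paper states this corollary as a citation of Cor.~2.6.2 of~\cite{facchinei02finite}, remarking only that it is ``a direct application of Theorem~\ref{Th2.6.1} to SCP($K,H$) when $E$ is the identity matrix,'' which is precisely your specialization. Your filled-in details --- continuity of $\bkE[H(\cdot;\om)]$ from Assumption~\ref{assumption1}(ii), copositivity of $I$, the ${\bf R}_0$ verification via $K_\infty = K$ and $d^Td=0$, and reading Theorem~\ref{Th2.6.1} in contrapositive so that failure of solvability forces unboundedness of $T$ --- are exactly what that application amounts to.
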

We may leverage this result in deriving a stochastic generalization.
\begin{proposition}[Theorem of the alternative]\em
Let $K$ be a closed convex cone in $\Real^n$ and let $G(x;\om)$ be a
mapping  that satisfies Assumption \ref{assumption1}. Either SCP$(K,F)$
has a solution or there exists an unbounded sequence of vectors $\{x_k\}$ and a sequence of positive scalars $\{\tau_k\}$ such that for every $k$, the following complementarity condition holds almost-surely:
\begin{equation} K \ni x_k~ \perp~ G(x_k;\om)+ \tau_k x_k\in K^*. \label{eq-as}
\end{equation}
\end{proposition}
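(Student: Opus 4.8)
The plan is to read the claim as the almost-sure analogue of the deterministic theorem of the alternative recorded in Corollary~\ref{cor:SCPsolIOR}, i.e.\ the case in which the perturbation matrix is the identity. The identity is the right object here because $I$ is copositive on every cone and $(K,I)$ is an $\mathbf{R}_0$ pair, and, unlike the scenario-dependent copositive matrices $M_\om$ manufactured in Lemma~\ref{lemma:SCP0}, this choice is deterministic and hence valid for every $\om$ at once. So I would first invoke Corollary~\ref{cor:SCPsolIOR}, whose hypotheses (Assumption~\ref{assumption1}) are exactly those assumed here: either SCP$(K,H)$ is solvable, in which case the first alternative holds and we are finished, or there is a deterministic unbounded sequence $\{x_k\}\subseteq K$ with scalars $\tau_k>0$ satisfying $K\ni x_k\perp \bkE[H(x_k;\om)]+\tau_k x_k\in K^*$.

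The heart of the argument is to promote this expectation-level complementarity to the stated scenariowise statement $K\ni x_k\perp H(x_k;\om)+\tau_k x_k\in K^*$ holding almost surely. I would isolate its two scalar ingredients: dual feasibility, $(\bkE[H(x_k;\om)]+\tau_k x_k)^T d\ge 0$ for all $d\in K$, and complementarity, $x_k^T(\bkE[H(x_k;\om)]+\tau_k x_k)=0$. Writing the latter as $\bkE[\,x_k^T(H(x_k;\om)+\tau_k x_k)\,]=0$, the natural device is the fact that a random variable which is almost surely nonnegative and has zero expectation must vanish almost surely; thus, if one can first establish the a.s.\ dual feasibility (and hence $x_k^T(H(x_k;\om)+\tau_k x_k)\ge 0$ a.s., using $x_k\in K$), the a.s.\ complementarity follows for free. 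The integrability in the spirit of Assumption~\ref{assumption1} would be used to pass expectations through these inequalities, exactly as Fatou's lemma is used in the surrounding propositions.

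The genuine obstacle, and the step I expect to be hardest, is precisely this upgrade: the expectation statement only forces the perturbed residual to lie in $K^*$ and to be complementary to $x_k$ on average, and on its own this does not pin down the pointwise-in-$\om$ sign conditions (a residual can be negative on a positive-measure set yet integrate correctly). The way I would circumvent it is to not upgrade a fixed $\{x_k\}$ at all, but instead apply the deterministic theorem of the alternative underlying Corollary~\ref{cor:SCPsolIOR} scenariowise to each continuous map $H(\cdot;\om)$ with the identity perturbation, producing for almost every $\om$ an unbounded family $x_k(\om)\in\textrm{SOL}(K,H(\cdot;\om)+\tau_k I)$; a measurable-selection theorem (Aumann's theorem, \cite[Th.~8.1.3]{aubin90setvalued}, already invoked elsewhere in the paper) then yields measurable selections for which the a.s.\ complementarity holds by construction. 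The delicate residual point is whether the selected $x_k$ can be taken independent of $\om$, as the literal reading demands; I would flag this as the crux and, failing a uniform choice, either phrase the conclusion in terms of measurable selections $x_k(\om)$ or add the scenario-consistency hypothesis under which the paper's other almost-sure results were established.
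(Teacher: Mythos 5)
Your proposal attacks the implication ``SCP$(K,H)$ has no solution $\Rightarrow$ the a.s.\ certificate \eqref{eq-as} exists,'' and that is not the implication the paper proves; the paper's proof runs entirely in the opposite direction and is essentially three lines. It supposes the second alternative holds: an unbounded sequence $\{x_k\}$ and positive scalars $\{\tau_k\}$ with \eqref{eq-as} holding almost surely. Because each $x_k$ is a \emph{deterministic} vector (the same vector for every $\om$), the a.s.\ conditions integrate directly: testing $H(x_k;\om)+\tau_k x_k\in K^*$ against any $d\in K$ and taking expectations gives $\bkE\left[H(x_k;\om)\right]+\tau_k x_k\in K^*$, and integrating $x_k^T\left(H(x_k;\om)+\tau_k x_k\right)=0$ gives $x_k^T\left(\bkE\left[H(x_k;\om)\right]+\tau_k x_k\right)=0$. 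Thus the expectation-level condition of Cor.~\ref{cor:SCPsolIOR} holds along an unbounded sequence, and the paper invokes that corollary (reading its disjunction as exclusive) to conclude that SCP$(K,H)$ does not admit a solution. In other words, the proposition as proved asserts that the a.s.\ certificate \eqref{eq-as} \emph{excludes} solvability; it uses exactly the passage from a.s.\ statements to expectations that you yourself call the ``free'' step, but in the only direction in which that step is available.

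The direction you attempt has genuine obstructions, and your own two fixes do not close them. First, as you correctly diagnose, the unbounded sequence produced by Cor.~\ref{cor:SCPsolIOR} satisfies complementarity only in expectation, and there is no way to upgrade this to \eqref{eq-as}: the residual can leave $K^*$ on a set of positive measure and still integrate into $K^*$. Second, your scenariowise workaround stalls one step earlier than the uniformity-in-$\om$ issue you flag: to apply the deterministic alternative to each $H(\cdot\,;\om)$ and obtain an unbounded family $x_k(\om)$, you would need CP$(K,H(\cdot\,;\om))$ to be \emph{unsolvable} for almost every $\om$, and unsolvability of the expectation problem transfers no such information to the scenario problems (scenario-level and expectation-level solvability are essentially unrelated; cf.\ the discussion around Example~\ref{ex-svi}). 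For the $\om$'s where the scenario CP is solvable, the deterministic theorem yields no sequence at all, so the measurable-selection step has nothing to select from. The missing idea is therefore one of interpretation rather than technique: the statement's content is not ``unsolvability produces an a.s.\ certificate'' but the converse exclusion, and once that reading is adopted the proof reduces to the elementary integration argument above.
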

\begin{proof}
Suppose \eqref{eq-as} holds almost surely. Consequently, it also holds
in expectation or
\begin{equation} K \ni x_k~ \perp~ \mathbb{E}[ G(x_k;\om)]+ \tau_k
x_k\in K^*. \label{eq-exp}
\end{equation}
Therefore by Cor.~\ref{cor:SCPsolIOR}, SCP$(K,F)$ does not admit a
solution.  \end{proof}

\section{Examples revisited}\label{sect:examples_applied}
We now revisit the motivating examples presented in Section
\ref{ss:notation} and show the applicability of the developed
sufficiency conditions in the context of such problems.

\subsection{Stochastic Nash-Cournot games with nonsmooth price functions}\label{ss:ex_ap:STEP}
In Section~\ref{sec:23}, we described a stochastic 
Nash-Cournot game in which the price functions were nonsmooth. We
revisit this example in showing the associated stochastic
quasi-variational inequality problem is solvable.  Before proceeding, we recall that $f_i(x;\omega)$ is a convex function
of $x_i$, given $x_{-i}$ (see~\cite[Lemma~1]{hobbs07nash}).
\begin{lemma}\em
	Consider the function $f_i(x;\om) = c_i(x_i)-x_ip(X;\omega)$ where
	$p(X;\om)$  is
	given by \eqref{eqn:nc_price}. Then $f_i(x_i;\xni)$ is a convex function in $x_i$ for all
	$\xni$. 
\end{lemma}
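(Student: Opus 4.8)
The plan is to fix the rivals' aggregate output $t \triangleq \sum_{j\neq i}x_j \geq 0$, so that $X = x_i + t$ and, viewed as a function of the scalar $x_i \in [0,\infty)$, $f_i(x_i;\xni) = c_i(x_i) - x_i\,p(x_i+t;\om)$. Since $c_i$ is convex by Assumption~\ref{nc}, it suffices to prove that the single term $\phi(x_i) \triangleq -x_i\,p(x_i+t;\om)$ is convex in $x_i$, i.e.\ that the revenue $x_i\,p(x_i+t;\om)$ is concave; convexity for every $\xni$ then follows since $t$ is arbitrary.

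First I would dispose of the interior of each affine piece. Whenever $x_i+t \in [\beta^{j-1},\beta^j]$ we have $p(x_i+t;\om)=a^j(\om)-b^j(\om)(x_i+t)$, so
\[
\phi(x_i) = b^j(\om)\,x_i^2 - \bigl(a^j(\om)-b^j(\om)\,t\bigr)x_i,
\]
a quadratic whose leading coefficient $b^j(\om)$ is strictly positive by Assumption~\ref{nc}. Hence $\phi$ is (strictly) convex on each such interval, so the revenue is concave on every piece.

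The heart of the argument is the junction between consecutive pieces, i.e.\ the values of $x_i$ with $x_i+t=\beta^j$. I would first note that $p(\cdot;\om)$ is continuous: the very definition \eqref{eqn:nc_price} forces the adjacent affine pieces to agree at each breakpoint, $a^j(\om)-b^j(\om)\beta^j = a^{j+1}(\om)-b^{j+1}(\om)\beta^j$, so $\phi$ is continuous on $[0,\infty)$. A continuous function that is convex on each piece is convex globally exactly when its one-sided derivatives are nondecreasing across each kink, i.e.\ when the jump of the derivative is nonnegative there. Since $\phi'(x_i) = -p(x_i+t;\om)-x_i\,p'(x_i+t;\om)$ and $p$ is continuous, the jump of $\phi'$ at $x_i+t=\beta^j$ comes only from the slope term and equals $\hat x_i\,\bigl(b^{j+1}(\om)-b^j(\om)\bigr)$, where $\hat x_i=\beta^j-t\geq 0$ is the abscissa of the breakpoint. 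This quantity is nonnegative precisely when the coefficients $b^j(\om)$ are nondecreasing in $j$, equivalently when the slopes $-b^j(\om)$ of $p$ are nonincreasing; establishing that the assumed curvature of the price function delivers this ordering at every breakpoint is the main obstacle, as it is the only point at which convexity cannot be read off from a pointwise second derivative.

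Finally I would assemble the three ingredients --- continuity of $\phi$, convexity on each affine piece, and the nonnegative derivative-jumps at the breakpoints --- to conclude that $\phi$ is convex on $[0,\infty)$. Adding back the convex $c_i$ shows that $f_i(\cdot;\xni)$ is convex in $x_i$, and since $\xni$ (hence $t$) was arbitrary, the lemma follows.
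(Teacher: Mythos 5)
Your reduction is mechanically sound: on each affine piece the revenue $x_i\,p(x_i+t;\om)$ is a concave quadratic (leading coefficient $-b^j(\om)<0$), $p(\cdot;\om)$ is continuous by \eqref{eqn:nc_price}, and global convexity of $\phi(x_i)=-x_i\,p(x_i+t;\om)$ therefore hinges exactly on the sign of the derivative jump $\hat x_i\bigl(b^{j+1}(\om)-b^j(\om)\bigr)$ at each breakpoint in the domain. The gap is that the one ingredient you yourself label ``the main obstacle''---the ordering $b^j(\om)\le b^{j+1}(\om)$---is never proved; in the closing paragraph you simply list ``nonnegative derivative-jumps'' among the established ingredients and conclude. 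Moreover, this omission cannot be repaired from the hypotheses as literally stated: Assumption~\ref{nc} declares $p(\cdot;\om)$ to be a decreasing piecewise smooth \emph{convex} function, and for a piecewise affine function convexity means the slopes $-b^j(\om)$ are nondecreasing in $j$, i.e.\ $b^j(\om)\ge b^{j+1}(\om)$---precisely the reverse of what you need. Whenever that inequality is strict at a breakpoint $\hat x_i=\beta^j-t>0$, your own jump formula shows $\phi'$ drops there, so $\phi$ has a concave kink and $f_i(\cdot;\xni)$ is \emph{not} convex (take $c_i$ linear to see the lemma fail). So your argument, pushed to its end, would refute rather than prove the statement under the literal hypothesis.

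What the lemma actually requires is that $p(\cdot;\om)$ be \emph{concave} piecewise affine ($b^j$ nondecreasing in $j$), which is exactly what the motivating price caps produce and what is assumed in \cite[Lemma~1]{hobbs07nash}---the source the paper cites in place of a proof (the paper supplies no argument of its own, so there is no internal proof to compare against). Note also that the paper's formula for the Clarke gradient, $\partial_{x_i}p(X;\om)=-[b^{j-1}(\om),b^{j}(\om)]$ at $X=\beta^{j-1}$, tacitly uses the same ordering $b^{j-1}\le b^j$; the word ``convex'' in Assumption~\ref{nc} is evidently a slip for ``concave''. Under that corrected reading your proof closes immediately: the jumps $\hat x_i(b^{j+1}-b^j)$ are nonnegative, the concave quadratic pieces glue to a concave revenue, and adding the convex $c_i$ gives the lemma. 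As written, however, the proof is incomplete at its decisive step, and no argument can complete it from the stated convexity hypothesis.
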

The convexity  of $f_i$ and $K_i(x_{-i})$ allows us to claim that the
first-order optimality conditions are sufficient; these conditions are
given by a multi-valued quasi-variational inequality  SQVI($K,\Phi$)
	where $\Phi(x)$, the Cartesian product of  generalized gradients, is defined as
	$$\Phi(x) \triangleq\bkE \left[\prod_{i=1}^N \partial_{x_i}
	f_i(x;\om)\right],$$
	and  $\Phi(x;\om)$ is defined as $\prod_{i=1}^N \partial_{x_i}
	f_i(x;\om).$  The subdifferential set of $f_i(x;\omega)$ is defined
	as 
\begin{align*}
    \partial_{x_i}f_{i}(x;\om) 
    &= c_i'(x_i) - \partial_{x_i}(x_{i}p(X;\om))
     = c_i'(x_i) - p(X;\om) - x_i \partial_{x_i} p(X;\om). 
\end{align*}
Thus, if  $w \in \Phi(x;\om)$, then $w
	= \prod_{i=1}^n w_i$ where $w_i \in \partial_{x_i}f_{i}(x;\om) $.
Based on the piecewise smooth nature of $p(X;\om)$,
the Clarke generalized gradient of $p$ is defined as follows:
\begin{align}
	\partial_{x_i} p(X;\om) \in  \begin{cases}
								 	\{-b_1(\omega)\}, & 0 \leq  X < \beta^1 \\
									-[b^{j-1}(\omega),b^{j}(\omega)],
									& \beta^{j-1} = X, \, j =
									2, \hdots, s \\
									\{-b^s(\omega)\},& \beta^s
									< X
								\end{cases}
	\end{align}
Since our interest lies in showing the applicability of our sufficiency
conditions when the map $\Phi$ is expectation valued, we impose the
required assumptions on the map ${\bf K}$ as captured by
Prop.~\ref{prop-exist-ns} (i) and (v). Existence of a nonsmooth
stochastic Nash-Cournot equilibrium follows from showing that hypotheses
(ii) -- (iv) of Prop.~\ref{prop-exist-ns} do indeed hold.

\begin{theorem}[Existence of stochastic Nash-Cournot equilibrium]\em
Consider the stochastic generalized Nash-Cournot game  and suppose
Assumptions \ref{lsc}, \ref{assumption-det-stratset} and  \ref{nc} hold.
Further, assume \us{that conditions (i) and (v) of } Prop.~\ref{prop-exist-ns}  hold. Then,
	this game admits an equilibrium. 
\end{theorem}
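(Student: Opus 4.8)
The plan is to read the equilibrium conditions of the game as the multi-valued stochastic quasi-variational inequality SQVI$(K,\Phi)$ with scenario map $\Phi(x;\om)=\prod_{i=1}^N \partial_{x_i}f_i(x;\om)$, and then to invoke Proposition~\ref{prop-exist-ns} with $q=0$. Since the preceding Lemma guarantees that each $f_i(\cdot;\om)$ is convex in $x_i$, the first-order (Clarke) stationarity conditions are both necessary and sufficient for $x_i^*$ to solve $G_i(x^*_{-i})$, so a solution of SQVI$(K,\Phi)$ is exactly an equilibrium. Conditions (i) and (v) of Proposition~\ref{prop-exist-ns} are granted in the hypothesis, so the remaining work is to verify (iv) upper semicontinuity, (ii) the almost-sure coercivity, and (iii) the integrable lower bound.

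First I would dispatch (iv). Because $f_i(\cdot;\om)$ is convex and $p(\cdot;\om)$ is convex and piecewise affine, $\partial_{x_i}p(X;\om)$ agrees with the convex subdifferential and is upper semicontinuous with compact convex values; writing $\partial_{x_i}f_i(x;\om)=c_i'(x_i)-p(X;\om)-x_i\,\partial_{x_i}p(X;\om)$ exhibits it as a continuous single-valued term plus the product of the continuous scalar $x_i$ with a USC map, hence USC, and a finite Cartesian product of USC maps is USC for almost every $\om$.

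For the coercivity (ii), the key observation is that $x_i\ge 0$ forces $\|x\|\le X\le\sqrt N\,\|x\|$ with $X=\sum_i x_i$, so $\|x\|\to\infty$ drives $X\to\infty$ and eventually $X>\beta^s$. In that final piece $\partial_{x_i}p(X;\om)=\{-b^s(\om)\}$ is a singleton, so $\Phi(x;\om)$ reduces to a single vector with $y_i=c_i'(x_i)-a^s(\om)+b^s(\om)X+b^s(\om)x_i$, and with $r:=\sum_i\xref_i$,
\begin{align*}
(x-\xref)^Ty &= \sum_i(x_i-\xref_i)c_i'(x_i)-a^s(\om)(X-r)\\
&\quad + b^s(\om)X(X-r)+b^s(\om)\big(\|x\|^2-\xref^Tx\big).
\end{align*}
The cost sum is bounded below by a constant via convexity of $c_i$, while $b^s(\om)X^2$ and $b^s(\om)\|x\|^2$ each dominate the remaining (at most linear) terms; dividing by $\|x\|$ gives a bound of order $b^s(\om)\|x\|\to\infty$, which is the required divergence almost surely since $b^s(\om)>0$ a.s.

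Finally, for (iii) I would bound $(x-\xref)^Tg(x;\om)$ below uniformly in $x$ for every selection $g_i=c_i'(x_i)-p(X;\om)+x_i\tilde b_i(\om)$ with $\tilde b_i(\om)\in[\min_j b^j(\om),\max_j b^j(\om)]$: the cost sum is at least a constant, $\sum_i\tilde b_i(\om)(x_i^2-\xref_i x_i)\ge-\tfrac14(\max_j b^j(\om))\|\xref\|^2$ by completing the square, and $-p(X;\om)(X-r)\ge-\max_j\frac{(a^j(\om)-b^j(\om)r)^2}{4b^j(\om)}$ since $p(X;\om)(X-r)$ is piecewise concave quadratic in $X$. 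The hard part is precisely this last term: producing an \emph{integrable} $U(\om)$ requires controlling quantities of the form $a^j(\om)^2/b^j(\om)$, so mere integrability of $(a^j,b^j)$ in Assumption~\ref{nc} is not obviously enough---one wants square-integrability of the intercepts $a^j$ together with $b^j$ bounded away from zero (or a comparable joint moment condition). A lesser subtlety is the multi-valuedness of $\partial p$ at the kinks, but it is harmless for (ii) because the regime $\|x\|\to\infty$ lands past $\beta^s$ where $\partial p$ is single-valued.
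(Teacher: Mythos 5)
Your proposal follows exactly the paper's route: identify equilibria with solutions of the multi-valued SQVI$(K,\Phi)$, where $\Phi(x;\om)=\prod_{i=1}^N\partial_{x_i}f_i(x;\om)$, and then verify hypotheses (ii)--(iv) of Proposition~\ref{prop-exist-ns}. Your verification of (iv) (Clarke gradients of functions convex in $x_i$ are nonempty, compact-convex-valued and upper semicontinuous, and sums, scalar products and Cartesian products preserve this) and of (ii) (for $x\geq 0$, $\|x\|\to\infty$ forces $X>\beta^s$, where the map is single-valued and the quadratic terms $b^s(\om)X(X-r)$ and $b^s(\om)\|x\|^2$ dominate the linear ones after dividing by $\|x\|$) are, up to notation, identical to the paper's argument --- the paper's ``Term (a) grows quadratically, Term (b) linearly'' is precisely your estimate.

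The one hypothesis you decline to sign off on --- (iii), the integrable minorant needed for Fatou's lemma --- is exactly where the paper's own proof is loose, and your suspicion is correct rather than a defect of your write-up. The paper's entire justification is that $a^j(\om)$ and $b^j(\om)$ are positive, hence ``bounded below by the integrable function $0$''; this does not yield an integrable lower bound on $g(x;\om)^T(x-\xref)$ that is uniform in $x$. As your computation shows, the infimum over $x$ of this inner product is of order $-\max_j (a^j(\om)-b^j(\om)r)^2/\bigl(4b^j(\om)\bigr)$ with $r=\sum_i \xref_i$, and this order is actually attained: with linear costs $c_i(x_i)=cx_i$, $\xref=0$, and all output on one coordinate ($x=Xe_1$), one gets $g(x;\om)^Tx = cX - a^s(\om)X + 2b^s(\om)X^2$, minimized at $X^*=(a^s(\om)-c)/(4b^s(\om))$ with value $-(a^s(\om)-c)^2/(8b^s(\om))$, and $X^*\geq\beta^s$ once $a^s/b^s$ is large. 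So on $\Omega=(0,1)$ with Lebesgue measure and $a^s(\om)=\om^{-1/2}$, $b^s(\om)=\om$ --- both integrable and positive a.s., so Assumption~\ref{nc} holds --- any admissible $U(\om)$ would have to dominate a quantity of order $\om^{-2}$, which is not integrable. Hence hypothesis (iii) genuinely fails under Assumption~\ref{nc} alone, the paper's one-sentence dismissal of this step is a real gap, and a moment condition of the kind you propose (e.g.\ $\mathbb{E}\bigl[(a^j)^2/b^j\bigr]<\infty$, or square-integrable $a^j$ with $b^j$ bounded away from zero a.s.) is what is actually required before Proposition~\ref{prop-exist-ns} can be invoked. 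In short: your proof is the paper's proof carried out honestly, and what it exposes is that the theorem's stated hypotheses are too weak for the paper's own argument.
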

\begin{proof}
Since $\partial_{x_i} f_i(x;\om)$ is a Clarke generalized gradient, it
is a nonempty upper semicontinuous mapping at $x_i$, given $x_{-i}$.
Furthermore, the integrability of $(a^j(\omega),b^j(\omega))$ for $j =
1, \hdots, s$ allows us to claim that $\partial_{x_i} f_i(x;\om)$ is
integrably bounded. Consequently, hypothesis (iv) in Proposition  ~\ref{prop-exist-ns} holds.

\gap

By Assumption ~\ref{nc}, since  $a_i(\om)$ and $b_i(\om)$ are positive,
   we have that they are bounded below by the nonnegative constant (integrable) function $0$. From, this and the description of
   $\Phi$ derived above, we see that  hypothesis (iii) in Prop.
   ~\ref{prop-exist-ns} holds. Thus Fatou's lemma can be applied. 

\gap

We now proceed to show that hypothesis (ii) in
Proposition~\ref{prop-exist-ns} holds.
It suffices to show that there exists an $\xref \in \K(x)$ such that 
$$ \lim_{\|x\| \to \infty, x \in \K(x)} \left(\inf_{ w \in \Phi(x;\om)}
\frac{ (x-\xref)^T w}{ \|x\|} \right)= \infty. $$
Consider a vector $\xref$ such that $$\xref \in \bigcap_{x ~\in~ \textrm{dom}(\K)} \K(x).$$ 
Then $w^T(x-\xref)$ can be expressed as the sum of several terms:
\begin{align*}
w^T(x-\xref) & = \sum_{i=1}^N c_i'(x_i)(x_i-\xref_i)  - \, p(X;\om) (x-\xref) - \sum_{i=1}^N x_i(x_i - \xref_i) \partial_{x_i}p(X;\om). 
\end{align*}
When $\|x\| \to \infty$, from the nonnegativity of $x$, it follows that $X \to \infty$ and for
sufficiently large $X$, we have that 
$\partial_{x_i} p(X;\omega) =-b^s(\omega).$ 
Consequently, for almost every $\omega \in \Omega$, we have that 
\begin{align*}
&  \quad \lim_{\|x\| \to \infty, x \in \K(x)} \inf_{w \in \Phi(x;\om)}
\frac{ (x-\xref)^Tw}{\|x\|} \\
& = \lim_{\|x\| \to \infty, x \in \K(x)}\left(\frac{\sum_{i=1}^N (c_i'(x_i) +
			b^s(\omega) x_i)(x_i-\xref_i)}{\|x\|}\right) 
- \lim_{\|x\| \to \infty, x \in \K(x)}\left(\frac{
			   p(X;\om) (x-\xref)}{\|x\|}\right) \\
& = \lim_{\|x\| \to \infty, x \in \K(x)}\overbrace{\left(\frac{\sum_{i=1}^N (c_i'(x_i) +
			b^s(\omega) (X+x_i))(x_i-\xref_i) }{\|x\|}\right)}^{Term \,
	(a)} 
 - \lim_{\|x\| \to \infty, x \in
	\K(x)}\overbrace{\left(\frac{a^s(\omega)
			(x-\xref)}{\|x\|}\right)}^{\it Term\, (b)} = \infty, \end{align*}
where the last equality is a consequence of noting that the numerator of
Term (a) tends to $+\infty$ at a quadratic rate while the numerator of
Term (b) tends to $+\infty$ at a linear rate. The existence of an
equilibrium follows from the application of Prop.~\ref{prop-exist-ns}.
\end{proof}

\subsection{Strategic behavior in power markets}\label{ss:ex_ap:NC}
In Section~\ref{sec:23}, we have presented a model for strategic
behavior in imperfectly competitive electricity markets. We will now
develop a stochastic complementarity-based formulation of such a
problem. The developed sufficiency conditions will then be applied to
this problem.

\gap

Recall that, the resulting problem faced by firm $f$  can be stated as
follows:
\[ \begin{array}{ll} 
\displaystyle{
{\operatornamewithlimits{\mbox{maximize}}_{s_{fi}, \, g_{fi}}}
} \quad \mathbb{E}\left[\displaystyle{
\sum_{i\in \Nscr}
} \, \left( \, p_i(S_i;\om)s_{fi} - c_{fi}(g_{fi};\om)
		-(s_{fi}-g_{fi})w_i \, \right)\right] &\\ [0.2in] 
\begin{array}{lrrllll}
\mbox{subject to} &
    g_{fi} &\leq &\,  {\rm cap}_{fi} & \quad (\, \mu_{fi} \,) , &\quad \forall \, i \in \Nscr \\ [5pt]
&  0  &\leq&\, g_{fi}  , &  & \quad \forall \, i \in \Nscr \\ [5pt]
& 0 &\leq  &\, s_{fi}   ,  & & \quad \forall \,i \in \Nscr  \\  [5pt]
\mbox{and} & \displaystyle{\sum_{i \in \Nscr}} \, ( \, s_{fi} - g_{fi} \, ) \, &= &\, 0. & \quad (\, \lambda_{f} \,) & \end{array}
\end{array}
\]
The equilibrium conditions of this problem are given by the following
complementarity problem.
\[
\begin{array}{lllr}
0 \leq \, s_{fi} & \perp & \mathbb{E} \left[ -p'_{i}(S_i;\om)s_{fi} -p_i(S_i;\om) + w_i \right] - \lambda_f  \geq 0,  &\quad \forall \,i \in \Nscr  \\[5pt]
0 \leq \, g_{fi} & \perp & \mathbb{E}\left[ c'_{fi}(g_{fi};\om) -w_i \right] + \mu_{fi} + \lambda_{f}  \geq 0,  &\quad \forall \, i \in \Nscr \\ [5pt]
 0 \leq \, \mu_{fi} & \perp &  {\rm cap}_{fi} -g_{fi} \geq 0, & \quad \forall \, i \in \Nscr \\ [5pt]
 \qquad \lambda_f & \perp &  \displaystyle{\sum_{i \in \Nscr}} \, ( \,
		 s_{fi} - g_{fi} \, ) = 0. & \quad \forall \, f \in \Fscr 
\end{array}
\]
The ISO's optimization problem is given by 
\[ \begin{array}{ll} 
\displaystyle{
{\operatornamewithlimits{\mbox{maximize}}_{y_i}}
} & \displaystyle{
\sum_{i \in \Nscr}
} \, y_i w_i \\ [0.2in]
\mbox{subject to} & \displaystyle{
\sum_{i \in \Nscr}
} \, {\rm PDF}_{ij} y_i \, \leq \, T_j, \quad ( \eta_j )\qquad \forall \, j \in \Kscr
\end{array} \]
and its optimality conditions are as follows:
\begin{align}\label{eqn:optcond}
\begin{array}{llr}
w_i \, = &  \displaystyle{ \sum_{j \in \Kscr}}\eta_j {\rm PDF}_{ij} &\qquad \forall \,i \in \Nscr,\\
0 \leq \eta_j  \perp & T_j - \displaystyle{ \sum_{i \in \Nscr}}{\rm PDF}_{ij} y_i  \geq 0 & \qquad \forall \, j \in \Kscr.
\end{array} 
\end{align}
The market clearing conditions are given by the following. 
\[
y_i \, = \, \displaystyle{
\sum_{h \in \Fscr}
} \, \left( \, s_{hi} - g_{hi} \, \right), \quad \qquad \qquad \quad \quad\forall \, i \in \Nscr.
\] 
Next, we define $\ell_i(\omega)$ and $h_i(\omega)$ as follows:
\begin{align} \label{eqn:kappa}
\ell_i(\om) & = -p'_{i}(S_i;\om)s_{fi} -p_i(S_i;\om) + w_i =
-p'_{i}(S_i;\om)s_{fi} -p_i(S_i;\om)  + \displaystyle{ \sum_{j \in
	\Kscr}}\eta_j {\rm PDF}_{ij} \\
\label{eqn:h_i}
h_i(\om) & = c'_{fi}(g_{fi};\om) -w_i = c'_{fi}(g_{fi};\om) - \displaystyle{ \sum_{j \in \Kscr}}\eta_j {\rm PDF}_{ij}.
\end{align} 
 Then, by aggregating all the equilibrium conditions together and eliminating $w_i$ and $y_i$ based on the  equality constraints \eqref{eqn:optcond}, we get the equilibrium conditions in $s_{fi},g_{fi}, \mu_{fi}, , \lambda_f,$ and $\eta_j$ are as follows
\[
\left\{\begin{array}{lllrl}
 0 \leq \, s_{fi} & \perp & \mathbb{E} \left[\ell_i(\om)\right] - \lambda_f  &\geq 0,  &\quad \forall \,i \in \Nscr  \\[5pt]
0 \leq \, g_{fi} & \perp & \mathbb{E}\left[h_i(\om)\right] + \mu_{fi} + \lambda_{f} &\geq 0,  &\quad \forall \, i \in \Nscr \\ [5pt]
0 \leq \, \mu_{fi} & \perp &  {\rm cap}_{fi} -g_{fi} &\geq 0, &\quad \forall \, i \in \Nscr \\ [5pt]
\qquad \lambda_f & \perp & \displaystyle{\sum_{i \in \Nscr}} \, ( \,
		s_{fi} - g_{fi} \, )  & =0, & 
 \end{array} \right\}, \quad \forall \,f\in \Fscr \\ \\
 \]
%
\[
\begin{array}{lllrl}
\mbox{ and } \, 0 \leq \eta_j  & \perp & T_j - \displaystyle{ \sum_{i \in \Nscr}}{\rm PDF}_{ij} \displaystyle{
\sum_{h \in \Fscr}
} \, \left( \, s_{hi} - g_{hi} \, \right)  &\geq 0.& \qquad \forall \, j \in \Kscr
\end{array}
\]
This can be viewed as the following stochastic (mixed)-complementarity problem where
$x$, $B$, and $G(x;\omega)$ are appropriately defined:
\begin{align*}
	0 \, \leq \, x & \, \perp \, \mathbb{E}[G(x;\omega)] - B^T \lambda
	\, \geq \, 0 \\
		\lambda  & \, \perp \, Bx \, =\,  0.
\end{align*}
It follows that the inner product in the coercivity
condition~\eqref{eqn:SCPhypo} reduces to $x^T \mathbb{E}[G(x;\omega)]$
as observed by this simplification:
$$ \pmat{ x\\ \lambda}^T \pmat{ \mathbb{E}[G(x;\omega)] - B^T \lambda \\
	Bx } = x^T \mathbb{E}[G(x;\omega)]. $$
Next, we show that this inner product is bounded from below by
$-u(\omega)$ where  $u(\omega)$ is a nonnegative integrable function. 
\begin{lemma}\label{supp-1}\em
For the stochastic complementarity problem SCP($K,F$) above that represents the strategic behavior in power markets, there exists a nonnegative integrable function $u(\om)$ such that  have that the following holds:
$$ G(x;\om) = x^T G(x;\om) \geq -u(\om) \mbox{ almost surely for all } x \ \in \ K.  $$
\end{lemma}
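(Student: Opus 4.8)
The plan is to compute the scenario-wise inner product $G(x;\om)=x^TH(x;\om)$ explicitly and exhibit a nonnegative integrable lower bound $-u(\om)$ that is uniform in $x\in K$. First I would pair each primal/dual variable with the corresponding row of the map read off from the equilibrium conditions: $s_{fi}$ with $\ell_i(\om)$ of \eqref{eqn:kappa}, $g_{fi}$ with $h_i(\om)+\mu_{fi}$ (from \eqref{eqn:h_i} together with the capacity multiplier), $\mu_{fi}$ with $\mathrm{cap}_{fi}-g_{fi}$, and $\eta_j$ with $T_j-\sum_{i\in\Nscr}\mathrm{PDF}_{ij}\sum_{h\in\Fscr}(s_{hi}-g_{hi})$. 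As already observed in the reduction preceding the lemma, the block associated with $\lambda$ cancels through $x^TB^T\lambda=\lambda^TBx$, so it need not be tracked.

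Next I would substitute $w_i=\sum_{j\in\Kscr}\eta_j\,\mathrm{PDF}_{ij}$ and carry out two cancellations. The bilinear terms $\pm\mu_{fi}g_{fi}$ coming from the $g_{fi}$-row and the $\mu_{fi}$-row cancel, and the $w_i$-weighted contributions from the $s$- and $g$-rows cancel against the coupling term in the $\eta$-rows, both being equal to $\sum_{i\in\Nscr}w_i\sum_{f\in\Fscr}(s_{fi}-g_{fi})$. What survives is
$$x^TH(x;\om)=\sum_{i\in\Nscr}\Big(-p_i'(S_i;\om)\sum_{f\in\Fscr}s_{fi}^2-p_i(S_i;\om)S_i+\sum_{f\in\Fscr}c_{fi}'(g_{fi};\om)g_{fi}+\sum_{f\in\Fscr}\mu_{fi}\,\mathrm{cap}_{fi}\Big)+\sum_{j\in\Kscr}\eta_jT_j,$$
where I used $\sum_{f\in\Fscr}s_{fi}=S_i$. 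I would then read off the sign information: since $x\in K$ is the nonnegative orthant, $s_{fi},g_{fi},\mu_{fi},\eta_j\ge0$; the capacities give $\mathrm{cap}_{fi}\ge0$ and $T_j\ge0$; by Assumption~\ref{assump:power} the price $p_i(\cdot;\om)$ is decreasing so $-p_i'(S_i;\om)\ge0$, and each cost is nondecreasing so $c_{fi}'(g_{fi};\om)\ge0$. Hence every summand is nonnegative except the revenue term $-p_i(S_i;\om)S_i$, and the whole question collapses to bounding that single term below uniformly in $x$.

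The main obstacle is precisely this revenue term: on the unbounded cone $K$ one has $S_i\to\infty$, so $-p_i(S_i;\om)S_i$ cannot be controlled from $p_i\le\bar p_i(\om)$ alone. Here I would invoke the affine, strictly decreasing inverse demand standard for this market, $p_i(S_i;\om)=a_i(\om)-b_i(\om)S_i$ with $b_i(\om)>0$ integrable, and retain the nonnegative quadratic term rather than discard it. Completing the square at the node level gives $-p_i'(S_i;\om)\sum_{f}s_{fi}^2-p_i(S_i;\om)S_i=b_i(\om)\sum_{f}s_{fi}^2+b_i(\om)S_i^2-a_i(\om)S_i\ge b_i(\om)S_i^2-a_i(\om)S_i\ge-\tfrac{a_i(\om)^2}{4\,b_i(\om)}$. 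Summing over $i\in\Nscr$ and setting $u(\om)\triangleq\sum_{i\in\Nscr}\frac{a_i(\om)^2}{4\,b_i(\om)}$ then yields $G(x;\om)=x^TH(x;\om)\ge-u(\om)$ for every $x\in K$ and almost every $\om$. The one quantitative point I would still pin down against the standing hypotheses is the nonnegativity and integrability of $u$, i.e. that $a_i(\om)^2/b_i(\om)$ is integrable, which follows from the positivity and integrability assumptions on the price coefficients $(a_i(\om),b_i(\om))$.
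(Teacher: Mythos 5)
Your expansion and cancellation of $x^TH(x;\om)$ is exactly the paper's: the $\mu_{fi}g_{fi}$ terms cancel, the $w_i$-weighted terms cancel against the $\eta$-block, and what survives is
$$\sum_{f,i}\left(-p_i'(S_i;\om)s_{fi}^2-p_i(S_i;\om)s_{fi}\right)+\sum_{f,i}c_{fi}'(g_{fi};\om)g_{fi}+\sum_{f,i}\mu_{fi}\,\mathrm{cap}_{fi}+\sum_j\eta_jT_j,$$
with only the revenue term $-\sum_{f,i}p_i(S_i;\om)s_{fi}$ possibly negative. Up to this point you agree with the paper, and your observation that $p_i\le\bar p_i(\om)$ alone cannot control this term on the unbounded cone is a sharp and correct diagnosis of the real difficulty. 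The divergence is in how the term is tamed, and this is where your proof has a genuine gap: you invoke an affine inverse demand $p_i(S_i;\om)=a_i(\om)-b_i(\om)S_i$ and complete the square, but Assumption~\ref{assump:power} --- the only standing hypothesis for the power-market model --- says nothing about affine (or even strictly decreasing) prices; it only assumes $p_i(\cdot;\om)$ is decreasing and bounded above by an integrable $\bar p_i(\om)$. The affine structure with coefficients $(a^j(\om),b^j(\om))$ belongs to Assumption~\ref{nc}, which governs the \emph{Nash-Cournot} example, not this one. So your argument proves the lemma for a different model than the one the lemma is stated for. Moreover, even granting affine prices, your final claim that integrability of $u(\om)=\sum_i a_i(\om)^2/(4b_i(\om))$ ``follows from the positivity and integrability'' of $(a_i,b_i)$ is false as stated: positivity and integrability of $a_i$ and $b_i$ do not imply integrability of $a_i^2/b_i$ (take $b_i$ with mass near $0$, or $a_i$ integrable but not square-integrable). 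That would have to be imposed as an additional hypothesis.

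The paper closes the gap differently: it bounds the revenue term by
$$-\sum_{f,i}p_i(S_i;\om)s_{fi}\;\ge\;-\Bigl(\max_i\bar p_i(\om)\Bigr)\sum_{f,i}\mathrm{cap}_{fi}\;\triangleq\;-u(\om),$$
using the price cap from Assumption~\ref{assump:power} together with the market relation $\sum_{f,i}s_{fi}\le\sum_{f,i}\mathrm{cap}_{fi}$ (sales balance generation and generation is capped), so that $u(\om)$ is integrable because $\bar p_i(\om)$ is. This keeps the argument within the stated assumptions, at the price of restricting the bound to points respecting the aggregate sales--capacity relation rather than literally all of the nonnegative orthant --- a subtlety your diagnosis implicitly exposes. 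If you want to salvage your route, you would need to either add the affine-demand and $a_i^2/b_i$-integrability hypotheses explicitly, or replace your completion of squares with the paper's capacity-based bound.
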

\begin{proof}
The product $x^T G(x;\om)$ can be expressed as follows:
\begin{align*}
& \quad \displaystyle{ \sum_{f, i}} \left( -p'_{i}(S_i;\om)s^2_{fi} -p_i(S_i;\om)s_{fi} +  \left( \displaystyle{ \sum_{j \in \Kscr}}\eta_j {\rm PDF}_{ij} \right) s_{fi}   \right) \\ 
& + \displaystyle{ \sum_{f, i}} \left( c'_{fi}(g_{fi};\om) g_{fi}- \left( \displaystyle{ \sum_{j \in \Kscr}}\eta_j {\rm PDF}_{ij} \right)g_{fi}  + \mu_{fi} g_{fi}  \right)   \\ 
& + \displaystyle{ \sum_{f, i}} \left( \mu_{fi} {\rm cap}_{fi} -\mu_{fi}g_{fi} \right)  + \displaystyle{ \sum_{j}}\eta_j \left( T_j - \displaystyle{ \sum_{i \in \Nscr}}{\rm PDF}_{ij} \displaystyle{
\sum_{h \in \Fscr}
} \, \left( \, s_{hi} - g_{hi} \, \right)   \right).
\end{align*}
After appropriate cancellations, this reduces to 
\begin{align*}
\displaystyle{ \sum_{f, i}} \left( -p'_{i}(S_i;\om)s^2_{fi}
		-p_i(S_i;\om)s_{fi}  \right) + \displaystyle{ \sum_{f, i}}
\left( c'_{fi}(g_{fi};\om) g_{fi} \right)  + \displaystyle{ \sum_{f, i}}
\left( \mu_{fi} {\rm cap}_{fi} \right)  +  \displaystyle{
	\sum_{j}}\eta_j  T_j.   
\end{align*}
By Assumption \ref{assump:power}, the price
functions are decreasing functions bounded above by an integrable
function and the cost functions are non-decreasing. Furthermore, {$K$
	is the nonnegative orthant, $\mu_{fi}, \eta_j$ are nonnegative,
	   and $ {\rm cap}_{fi} , T_j$ denote nonnegative capacities}.
	   Consequently, we have the following sequence of inequalities.
	\begin{align*}
& \displaystyle{ \sum_{f, i}} \left( -p'_{i}(S_i;\om)s^2_{fi}
		-p_i(S_i;\om)s_{fi}  \right) + \displaystyle{ \sum_{f, i}}
\left( c'_{fi}(g_{fi};\om) g_{fi} \right)  + \displaystyle{ \sum_{f, i}}
\left( \mu_{fi} {\rm cap}_{fi} \right)  +  \displaystyle{
	\sum_{j}}\eta_j  T_j \\
	& \geq \displaystyle{ \sum_{f, i}} \left( 		-p_i(S_i;\om)s_{fi}
			\right)  \geq -\left(\max_{i} \bar p_i(\omega) \right) \sum_{f,i} \mbox{cap}_{fi} \triangleq
	-u(\omega), 
\end{align*}
where $p_i(S_i;\omega) \leq \bar p_i(\omega)$ for all nonnegative $S_i$ and
$\sum_{f,i} s_{fi} \leq \sum_{f,i} \mbox{cap}_{fi}.$  Integrability of $u(\omega)$ follows immediately by its definition. 
\end{proof}

Having presented the supporting results, we now prove the existence of
an equilibrium.

\begin{proposition}[Existence of an imperfectly competitive equilibrium] \em Consider the
imperfectly competitive model in  power markets.  Under  Assumption
\ref{assump:power}, this problem admits a solution.
\end{proposition}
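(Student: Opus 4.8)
The plan is to recast the market equilibrium as the stochastic complementarity problem SCP$(K,H)$ derived above and to invoke Proposition~\ref{prop:SCPxH}. Recall that the decision variable is $(s,g,\mu,\eta,\lambda)$, that $K$ is the nonnegative orthant in the $(s,g,\mu,\eta)$ block with $\lambda$ free, and that---as already observed---the inner product appearing in the coercivity hypothesis~\eqref{eqn:SCPhypo} collapses to $x^T\mathbb{E}[H(x;\omega)]$ with $x\triangleq(s,g,\mu,\eta)$. Thus the free multiplier $\lambda$ plays no role in the coercivity estimate, and it suffices to verify the hypotheses of Proposition~\ref{prop:SCPxH} for the map $H$ over the nonnegative block.

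First I would verify that Assumption~\ref{assumption1} holds for $H$: continuity of $H(\cdot;\omega)$ follows from continuity of the price map $p_i$, its derivative $p_i'$, and the marginal costs $c_{fi}'$, while integrability in $\omega$ and continuity of $\mathbb{E}[H(\cdot)]$ follow from the integrable upper bound $\bar p_i(\omega)$ in Assumption~\ref{assump:power} together with the corresponding integrable bounds on the remaining data. Next, hypothesis (ii) of Proposition~\ref{prop:SCPxH} is exactly the content of Lemma~\ref{supp-1}, which supplies a nonnegative integrable $u(\omega)$ with $x^T H(x;\omega)\geq -u(\omega)$ almost surely. For the copositive ${\bf R}_0$ matrix required by Proposition~\ref{prop:SCPxH}, I would take $M$ to be the identity, which is copositive on any cone and satisfies $x^T M x=\|x\|^2=0\Leftrightarrow x=0$, so that $(K,M)$ is an ${\bf R}_0$ pair.

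The remaining and most delicate task is to establish the coercivity condition (i) of Proposition~\ref{prop:SCPxH}, namely $\liminf_{\|x\|\to\infty,\,x\in K} x^T H(x;\omega)>0$ almost surely. Here I would start from the expansion of $x^T H(x;\omega)$ computed in the proof of Lemma~\ref{supp-1},
$$ \sum_{f,i}\bigl(-p_i'(S_i;\omega)s_{fi}^2 - p_i(S_i;\omega)s_{fi}\bigr) + \sum_{f,i} c_{fi}'(g_{fi};\omega)g_{fi} + \sum_{f,i}\mu_{fi}\,\mathrm{cap}_{fi} + \sum_j \eta_j T_j, $$
and argue that each group of terms is eventually dominant in the corresponding direction of blow-up. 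Since $p_i$ is decreasing, $-p_i'\geq 0$ yields a nonnegative (and, under strict price decrease, quadratically growing) contribution in $s$ that dominates the linear term $-p_i s_{fi}$, which is controlled from below using $p_i\leq\bar p_i(\omega)$; since the costs are increasing, $c_{fi}'g_{fi}\geq 0$ grows as $g\to\infty$; and the capacity and transmission terms $\mu_{fi}\,\mathrm{cap}_{fi}$ and $\eta_j T_j$ grow linearly as $\mu,\eta\to\infty$ because the capacities are nonnegative. The main obstacle is to make this case analysis uniform over all directions in which $\|x\|\to\infty$ and to secure a \emph{strictly} positive liminf rather than a merely nonnegative one; this forces the argument to rely on strict monotonicity of the price functions and strict positivity of the generation and transmission capacities, so that no coordinate can escape to infinity without driving its associated term to $+\infty$. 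Once coercivity is established almost surely, all hypotheses of Proposition~\ref{prop:SCPxH} are met, SCP$(K,H)$ is solvable, and this is precisely the existence of the imperfectly competitive equilibrium.
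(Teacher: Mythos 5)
Your proposal follows exactly the paper's route: the paper proves existence by applying Proposition~\ref{prop:SCPxH}, citing Lemma~\ref{supp-1} for hypothesis (ii), relying (as you do) on the identity matrix as the copositive ${\bf R}_0$ matrix --- a choice the paper flags in the discussion immediately preceding Proposition~\ref{prop:SCPxH} --- and establishing the almost-sure coercivity (i) from the same expansion of $x^T H(x;\omega)$, with the quadratic term $-p_i'(S_i;\omega)s_{fi}^2$ dominating and the remaining terms nonnegative. If anything, you are more candid than the paper about the one delicate point: the paper simply asserts that the expression ``can only increase to $\infty$'' as $\|x\|\to\infty$, whereas you correctly observe that a direction-by-direction analysis (and, implicitly, strict positivity of capacities and strict price decrease) is what such a claim actually requires.
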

\begin{proof}
The result follows by showing that Theorem~\ref{prop:SCPSolvability} can be
applied.  Lemma~\ref{supp-1} shows that \us{hypothesis (ii) of }
Theprem~\ref{prop:SCPSolvability} holds. We proceed to show that
\us{hypothesis (i.b) of}  Theorem~\ref{prop:SCPSolvability} also holds. We show that the following property holds almost surely:
 \begin{align}
 {\liminf_{\|x\| \to \infty,~ x\geq 0}
x^T G(x;\omega)  > 0. }
\label{eqn:lowerbd}
\end{align}

Consider the expression for $x^TG(x;\om)$ derived in Lemma~\ref{supp-1}. 

\begin{align*}
x^TG(x;\om)= \displaystyle{ \sum_{f, i}} \left( -p'_{i}(S_i;\om)s^2_{fi} -p_i(S_i;\om)s_{fi}  \right) + \displaystyle{ \sum_{f, i}} \left( c'_{fi}(g_{fi};\om) g_{fi} \right)  + \displaystyle{ \sum_{f, i}} \left( \mu_{fi} {\rm cap}_{fi} \right)  +  \displaystyle{ \sum_{j}}\eta_j  T_j.  
\end{align*}

For large $\|x\|$, the first summation is dominated by its first term
and by Assumption \ref{assump:power}, as $\|x\|$ goes to $\infty$, this
term goes to $\infty$. The other terms are all nonnegative by Assumption
\ref{assump:power}. Thus, the entire expression can only increase to
$\infty$ as $\|x\|$ goes to $\infty$. This proves that \eqref{eqn:lowerbd}
holds and the required result follows. 

\end{proof}

\section{Concluding Remarks} \label{sect:conclusion}
Finite-dimensional variational inequality and complementarity problems
have proved to be extraordinarily useful tools for modeling a range of
equilibrium problems in engineering, economics, and finance. This avenue
of study is facilitated by the presence of a comprehensive theory for
the solvability of variational inequality problems and their variants.
When such problems are complicated by uncertainty, a subclass of models
lead to variational problems whose maps contain expectations. A direct
application of available theory requires access to analytical forms of
such integrals and their derivatives, severely limiting the utility of
existing sufficiency conditions for solvability.

\gap

To resolve this gap, we provide a set of integration-free sufficiency
conditions for the existence of solutions to variational inequality
problems, quasi-variational generalizations, and complementarity
problems in settings where the maps are either single-valued or
multi-valued. These conditions find utility in the existence of
equilibria in the context of generalized nonsmooth stochastic
Nash-Cournot games and strategic problems in power markets. We believe
that these statements are but a first step in examining a range of
problems in stochastic regimes. These include the development of
stability and sensitivity statements as well as the consideration of
broader mathematical objects such as stochastic differential variational
inequality problems. 

\section{Appendix}\label{Appendix}
{\bf Proof of Prop.~\ref{prop-smooth-exist}.}
\begin{proof}
 Recall from~\cite[Ch.~2]{facchinei02finite} that  the solvability of
	SVI$(K,F)$ requires the existence of an $\xref$ such that 
\begin{align}\label{eqn:coer}
	 \liminf_{ \|x\|\rightarrow \infty, x \in K} \left[ F(x)^T(x-\xref) \right]> 0.
\end{align}
	But we have that 
\begin{align*}
  \liminf_{ \|x\|\rightarrow \infty, x \in K} \left[F(x)^T(x-\xref)
	\right]  =   \liminf_{ \|x\|\rightarrow \infty, x \in K}
	\left[\int_\Omega G(x;\om)^T(x-\xref) d\bkP\right].
\end{align*}
By hypothesis (ii), we may apply Fatou's lemma to obtain the
	following inequality:
\begin{align*}
\liminf_{ \|x\|\rightarrow \infty, x \in K} \left[ F(x)^T(x-\xref) \right]\geq
	\int_\Omega \liminf_{ \|x\|\rightarrow \infty, x \in
		K}\left[G(x;\om)^T(x-\xref)\right] d\bkP > 0, 
\end{align*}
where the last inequality follows from the given hypothesis. {Thus \eqref{eqn:coer} holds and SVI$(K,F)$ has a
	solution.}\end{proof}

{\bf Proof of Prop.~\ref{prop-smooth-exist-cart}.}
\begin{proof}
For the given $\xref \in K$ and for any $x \in K$, there exists a $\nu
\in \{1, \hdots, N\}$, such that 
$$\liminf_{\|x_\nu\|\to \infty, x_\nu\in K_\nu} \left[ \uvs{G}_\nu(x;\om)^T(x_\nu - \xref_\nu) \right]> 0 $$
holds almost surely. Thus we obtain
$$\bkE \left[\liminf_{\|x_\nu\|\to \infty, x_\nu\in K_\nu}
\uvs{G}_\nu(x;\om)^T(x_\nu - \xref_\nu) \right ] > 0. $$
By hypothesis (ii) above we may apply Fatou's lemma to get
$$ \liminf_{\|x_\nu\|\to \infty, x_\nu\in K_\nu} \bkE
\left[\uvs{G}_\nu(x;\om)^T(x_\nu - \xref_\nu)\right]  > 0. $$
This implies that $C_{\leq}$ is bounded where 
$$ C_{\leq} := \left\{ x \in K : \max_{1 \leq \nu \leq N} \bkE \left[
\uvs{G}_\nu(x;\om)^T(x_\nu - \xref_\nu)\right] \leq 0\right\}.$$ 

From~\cite[Prop.~3.5.1]{facchinei02finite}, boundedness of  $C_{\leq}$
allows us to conclude that SVI$(K,F)$ is solvable.
\end{proof}
{\bf Proof of Cor.~\ref{prop-monotone-exist}.}\begin{proof}
We begin with the observation that the monotonicity of $G(x;\om)$
allows us to bound $G(x;\om)^T(x-\xref)$ from below as follows:  
\begin{align*}
  G(x;\om)^T(x-\xref) &= \left[ G(x;\om)-G(\xref;\om) \right] ^T (x-\xref)
 + G(\xref;\om) ^T (x-\xref)
 \geq  \uvs{G}({\xref};\om )^T(x-\xref).
\end{align*}
By utilizing this observation, we may complete the proof by using a
similar avenue as followed in Prop.~\ref{prop-smooth-exist} (details
		omitted).
\end{proof}
\bibliographystyle{siam}
\bibliography{ref_14}

\begin{thebibliography}{10}

\bibitem{aubin90setvalued}
{\sc J.~Aubin and H.~Frankowska}, {\em Set-valued analysis}, vol.~2 of Systems
  \& Control: Foundations \& Applications, Birkh\"auser Boston Inc., Boston,
  MA, 1990.

\bibitem{aumann}
{\sc R.~J. Aumann}, {\em Integrals of set-valued functions}, J. Math. Anal.
  Appl., 12 (1965), pp.~1--12.

\bibitem{bental09robust}
{\sc A.~Ben-Tal, L.~El~Ghaoui, and A.~Nemirovski}, {\em Robust optimization},
  Princeton Series in Applied Mathematics, Princeton University Press,
  Princeton, NJ, 2009.

\bibitem{birge11introduction}
{\sc J.~R. Birge and F.~Louveaux}, {\em Introduction to stochastic
  programming}, Springer Series in Operations Research and Financial
  Engineering, Springer, New York, second~ed., 2011.

\bibitem{1982ChanPang_GQVI}
{\sc D.~Chan and J.~S. Pang}, {\em The generalized quasivariational inequality
  problem}, Math. Oper. Res., 7 (1982), pp.~211--222.

\bibitem{CFukushima05}
{\sc X.~Chen and M.~Fukushima}, {\em Expected residual minimization method for
  stochastic linear complementarity problems}, Mathematics of Operations
  Research, 30 (2005), pp.~1022--1038.

\bibitem{chen12stochastic}
{\sc X.~Chen, R.~J.-B. Wets, and Y.~Zhang}, {\em Stochastic variational
  inequalities: Residual minimization smoothing sample average approximations},
  SIAM Journal on Optimization, 22 (2012), pp.~649--673.

\bibitem{cottle92linear}
{\sc R.~W. {Cottle}, J.-S. {Pang}, and R.~E. {Stone}}, {\em The Linear
  Complementarity Problem}, Academic Press, Inc., Boston, MA, 1992.

\bibitem{pang09nash}
{\sc F.~Facchinei and J.~Pang}, {\em Nash Equilibria: The Variational
  Approach}, Convex Optimization in Signal Processing and Communication,
  Cambridge University Press, 2009.

\bibitem{facchinei02finite}
{\sc F.~Facchinei and J.-S. Pang}, {\em Finite Dimensional Variational
  Inequalities and Complementarity Problems: Vols I and II}, Springer-Verlag,
  NY, Inc., 2003.

\bibitem{FCFukushima07}
{\sc H.~Fang, X.~Chen, and M.~Fukushima}, {\em Stochastic r$_0$ matrix linear
  complementarity problems}, SIAM Journal on Optimization, 18 (2007),
  pp.~482--506.

\bibitem{fudenberg91game}
{\sc D.~Fudenberg and J.~Tirole}, {\em Game Theory}, MIT Press, 1991.

\bibitem{gurkan99sample}
{\sc G.~Gurkan, A.~Ozge, and S.~Robinson}, {\em Sample-path solution of
  stochastic variational inequalities}, Mathematical Programming, 84 (1999),
  pp.~313--333.

\bibitem{gwinner06random}
{\sc J.~Gwinner and F.~Raciti}, {\em Random equilibrium problems on networks},
  Mathematical and Computer Modelling, 43 (2006), pp.~880--891.

\bibitem{gwinner12equilibrium}
\leavevmode\vrule height 2pt depth -1.6pt width 23pt, {\em Some equilibrium
  problems under uncertainty and random variational inequalities}, Annals OR,
  200 (2012), pp.~299--319.

\bibitem{harker_finite-dimensional_1990}
{\sc P.~Harker and J.-S. Pang}, {\em Finite-dimensional variational inequality
  and nonlinear complementarity problems: A survey of theory, algorithms and
  applications}, Mathematical Programming, 48 (1990), pp.~161--220.

\bibitem{harker91generalized}
{\sc P.~T. Harker}, {\em Generalized nash games and quasi-variational
  inequalities}, European Journal of Operational Research, 54 (1991),
  pp.~81--94.

\bibitem{hartman66some}
{\sc P.~Hartman and G.~Stampacchia}, {\em On some nonlinear elliptic
  differential functional equations}, Acta Math., 115 (1966), pp.~153--188.

\bibitem{hobbs01linear}
{\sc B.~F. Hobbs}, {\em Linear complementarity models of {N}ash-{C}ournot
  competition in bilateral and poolco power markets}, IEEE Transactions on
  Power Systems, 16 (2001), pp.~194--202.

\bibitem{hobbs04complementarity}
{\sc B.~F. Hobbs and U.~Helman}, {\em Complementarity-based equilibrium
  modeling for electric power markets}, in Modelling Prices in Competitive
  Electricity Markets, D.~Bunn, ed., Wiley, 2004.

\bibitem{hobbs07nash}
{\sc B.~F. Hobbs and J.~S. Pang}, {\em Nash-{C}ournot equilibria in electric
  power markets with piecewise linear demand functions and joint constraints},
  Oper. Res., 55 (2007), pp.~113--127.

\bibitem{jiang08stochastic}
{\sc H.~Jiang and H.~Xu}, {\em Stochastic approximation approaches to the
  stochastic variational inequality problem}, IEEE Trans. Automat. Control, 53
  (2008), pp.~1462--1475.

\bibitem{juditsky2011}
{\sc A.~Juditsky, A.~Nemirovski, and C.~Tauvel}, {\em Solving variational
  inequalities with stochastic mirror-prox algorithm}, Stochastic Systems, 1
  (2011), pp.~17--58.

\bibitem{Karmardian1971GCP}
{\sc S.~Karamardian}, {\em Generalized complementarity problem}, J.
  Optimization Theory Appl., 8 (1971), pp.~161--168.

\bibitem{Kien2008MultivaluedVI}
{\sc B.~T. Kien, J.-C. Yao, and N.~D. Yen}, {\em On the solution existence of
  pseudomonotone variational inequalities}, J. Global Optim., 41 (2008),
  pp.~135--145.

\bibitem{KingRock93}
{\sc A.~King and R.~Rockafellar}, {\em Asymptotic theory for solutions in
  statistical estimation and stochastic programming}, Mathematics of Operations
  Research, 18 (1993), pp.~148--162.

\bibitem{konnov07equilibrium}
{\sc I.~V. Konnov}, {\em Equilibrium models and variational inequalities},
  vol.~210 of Mathematics in Science and Engineering, Elsevier B. V.,
  Amsterdam, 2007.

\bibitem{koshal13regularized}
{\sc J.~Koshal and A.~N. andU. V.~Shanbhag}, {\em Regularized iterative
  stochastic approximation methods for stochastic variational inequality
  problems}, IEEE Trans. Automat. Contr., 58 (2013), pp.~594--609.

\bibitem{koshal10regularized}
{\sc J.~Koshal, A.~Nedi\'{c}, and U.~V. Shanbhag}, {\em Single timescale
  regularized stochastic approximation schemes for monotone {N}ash games under
  uncertainty}, Proceedings of the IEEE Conference on Decision and Control
  (CDC),  (2010), pp.~231--236.

\bibitem{koshal11single}
{\sc J.~Koshal, A.~Nedi\'{c}, and U.~V. Shanbhag}, {\em Single timescale
  stochastic approximation for stochastic {N}ash games in cognitive radio
  systems}, in Proceedings of the 17th International Conference on Digital
  Signal Processing (DSP), 2011, pp.~1--8.

\bibitem{kushner03stochastic}
{\sc H.~J. Kushner and G.~G. Yin}, {\em Stochastic approximation and recursive
  algorithms and applications}, vol.~35 of Applications of Mathematics (New
  York), Springer-Verlag, New York, second~ed., 2003.
\newblock Stochastic Modelling and Applied Probability.

\bibitem{LFukushima10}
{\sc G.~Lin and M.~Fukushima}, {\em Stochastic equilibrium problems and
  stochastic mathematical programs with equilibrium constraints: A survey},
  Pacific Journal of Optimization, 6 (2010), pp.~455--482.

\bibitem{liu13quantitative}
{\sc Y.~Liu, W.~Romisch, and H.~Xu}, {\em Quantitative stability analysis of
  stochastic generalized equations}, submitted,  (2013).

\bibitem{lu13confidence}
{\sc S.~Lu and A.~Budhiraja}, {\em Confidence regions for stochastic
  variational inequalities}, Mathematics of Operations Research, 38 (2013),
  pp.~545--568.

\bibitem{luo3}
{\sc M.-J. Luo and G.~H. Lin}, {\em Convergence results of the {ERM} method for
  nonlinear stochastic variational inequality problems}, J. Optim. Theory
  Appl., 142 (2009), pp.~569--581.

\bibitem{luo1}
{\sc M.-J. Luo and Y.~Lu}, {\em Properties of expected residual minimization
  model for a class of stochastic complementarity problems}, J. Appl. Math.,
  (2013), pp.~Art. ID 497586, 7.

\bibitem{luo2}
{\sc M.-J. Luo and O.~Wu}, {\em Convergence results of the {ERM} method for
  stochastic {N}ash equilibrium problems}, Far East J. Math. Sci. (FJMS), 70
  (2012), pp.~309--320.

\bibitem{pang13sensing}
{\sc J.~S. Pang and G.~Scutari}, {\em Joint sensing and power allocation in
  nonconvex cognitive radio games: Quasi-nash equilibria}, IEEE Transactions on
  Signal Processing, 61 (2013), pp.~2366--2382.

\bibitem{pang96complementarity}
{\sc J.~S. Pang and J.~C. Trinkle}, {\em Complementarity formulations and
  existence of solutions of dynamic multi-rigid-body contact problems with
  coulomb friction}, Math. Program., 73 (1996), pp.~199--226.

\bibitem{pang00stability}
\leavevmode\vrule height 2pt depth -1.6pt width 23pt, {\em Stability
  characterizations of fixtured rigid bodies with coulomb friction}, in
  Robotics and Automation, 2000. Proceedings. ICRA '00. IEEE International
  Conference on, vol.~1, 2000, pp.~361--368.

\bibitem{ralph11asymptotic}
{\sc D.~Ralph and H.~Xu}, {\em Convergence of stationary points of sample
  average two-stage stochastic programs: A generalized equation approach},
  Math. Oper. Res., 36 (2011), pp.~568--592.

\bibitem{ravat09characterization}
{\sc U.~Ravat and U.~V. Shanbhag}, {\em On the characterization of solution
  sets of smooth and nonsmooth stochastic {N}ash games}, Proceedings of the
  American Control Conference (ACC),  (2010), pp.~5632--5637.

\bibitem{ravat11characterization2}
{\sc U.~Ravat and U.~V. Shanbhag}, {\em On the characterization of solution
  sets of smooth and nonsmooth convex stochastic {N}ash games}, SIAM Journal on
  Optimization, 21 (2011), pp.~1168--1199.

\bibitem{Robinson79}
{\sc S.~Robinson}, {\em Generalized equations and their solutions. i. basic
  theory}, Mathematical Programming Study, 10 (1979), pp.~128--141.

\bibitem{Robinson80}
\leavevmode\vrule height 2pt depth -1.6pt width 23pt, {\em Strongly regular
  generalized equations}, Mathematics of Operations Research, 5 (1980),
  pp.~43--62.

\bibitem{RUZabarankin06a}
{\sc R.~Rockafellar, S.~Uryasev, and M.~Zabarankin}, {\em Generalized
  deviations in risk analysis}, Finance and Stochastics, 10 (2006).

\bibitem{RUZabarankin06b}
\leavevmode\vrule height 2pt depth -1.6pt width 23pt, {\em Optimality
  conditions in portfolio analysis with generalized deviation measures},
  Mathematical Programming, Series A, 108 (2006).

\bibitem{scutari10monotone}
{\sc G.~Scutari, F.~Facchinei, J.-S. Pang, and D.~Palomar}, {\em Monotone games
  in communications: Theory, algorithms, and models}, to appear, IEEE
  Transactions on Information Theory,  (April 2010).

\bibitem{shanbhag13tutorial}
{\sc U.~V. Shanbhag}, {\em Stochastic variational inequality problems:
  Applications, analysis, and algorithms}, TUTORIALS in Operations Research,
  (2013), pp.~71--107.

\bibitem{shanbhag11complementarity}
{\sc U.~V. Shanbhag, G.~Infanger, and P.~W. Glynn}, {\em A complementarity
  framework for forward contracting under uncertainty}, Operations Research, 59
  (2011), pp.~7--49.

\bibitem{shap03sampling}
{\sc A.~Shapiro}, {\em Monte carlo sampling methods}, in Handbook in Operations
  Research and Management Science, vol.~10, Elsevier Science, Amsterdam, 2003,
  pp.~353--426.

\bibitem{shapiro09lectures}
{\sc A.~Shapiro, D.~Dentcheva, and A.~Ruszczy{\'n}ski}, {\em Lectures on
  stochastic programming}, vol.~9 of MPS/SIAM Series on Optimization, Society
  for Industrial and Applied Mathematics (SIAM), Philadelphia, PA, 2009.
\newblock Modeling and theory.

\bibitem{lin1}
{\sc M.~Wang, G.-H. Lin, Y.~Gao, and M.~M. Ali}, {\em Sample average
  approximation method for a class of stochastic variational inequality
  problems}, J. Syst. Sci. Complex., 24 (2011), pp.~1143--1153.

\bibitem{xu10sample}
{\sc H.~Xu}, {\em Sample average approximation methods for a class of
  stochastic variational inequality problems}, Asia-Pac. J. Oper. Res., 27
  (2010), pp.~103--119.

\bibitem{xu13stochastic}
{\sc H.~Xu and D.~Zhang}, {\em Stochastic {N}ash equilibrium problems: sample
  average approximation and applications}, To appear in Computational
  Optimization and Applications,  (2013).

\bibitem{yin09nash2}
{\sc H.~Yin, U.~V. Shanbhag, and P.~G. Mehta}, {\em Nash equilibrium problems
  with scaled congestion costs and shared constraints}, IEEE Transactions on
  Automatic Control, 56 (2011), pp.~1702--1708.

\bibitem{yousefian11regularized}
{\sc F.~Yousefian, A.~Nedi\'{c}, and U.~V. Shanbhag}, {\em A regularized
  adaptive steplength stochastic approximation scheme for monotone stochastic
  variational inequalities}, in Winter Simulation Conference, S.~Jain, R.~R.~C.
  Jr., J.~Himmelspach, K.~P. White, and M.~C. Fu, eds., WSC, 2011,
  pp.~4115--4126.

\bibitem{yousefian13acc}
\leavevmode\vrule height 2pt depth -1.6pt width 23pt, {\em A distributed
  adaptive steplength stochastic approximation method for monotone stochastic
  {N}ash games}, Proceedings of the American Control Conference (ACC),
  Baltimore, 2013,  (2013), pp.~4772--4777.

\bibitem{yousefian13regularized}
{\sc F.~Yousefian, A.~Nedi\'{c}, and U.~V. Shanbhag}, {\em A regularized
  smoothing stochastic approximation {(RSSA)} algorithm for stochastic
  variational inequality problems}, in Winter Simulation Conference, IEEE,
  2013, pp.~933--944.

\end{thebibliography}
\end{document}